\documentclass[oneside,english]{amsart}
\usepackage[T1]{fontenc}
\usepackage[latin9]{inputenc}
\usepackage{xcolor}
\usepackage{mathrsfs}
\usepackage{amstext}
\usepackage{amsthm}
\usepackage[small]{diagrams}

\def\R{\mathbb R}
\def\V{\mathcal{V}}
\def\pa{\partial}
\def\id{\mathrm{id}}
\def\H{\mathcal{H}}
\def\E{\mathcal{E}}

\def\ker{\mathrm{Ker}}
\def\im{\mathrm{Im}}

\theoremstyle{plain}
\newtheorem{theorem}{Theorem}
\newtheorem{lemma}{Lemma}
\newtheorem{corollary}{Corollary}

\theoremstyle{remark}
\newtheorem{remark}{Remark}
\newtheorem{example}{Example}

\usepackage{amsthm}

\makeatletter
\renewenvironment{proof}[1][\proofname]{\par
  \normalfont \topsep6\p@\@plus6\p@\relax
  \trivlist
  \item[\hskip\labelsep
        \itshape
    #1\@addpunct{.}]\ignorespaces
}{%
  \endtrivlist\@endpefalse
}
\makeatother

\begin{document}

\title{Variational submanifolds of Euclidean spaces
}

\author{D. Krupka}
\address{Demeter Krupka\newline Lepage Research Institute, University of Presov\newline 17th November 1, Presov, Slovakia}
\email{demeter.krupka@lepageri.eu}

\author{Z. Urban}
\author{J. Voln\'a}
\address{Zbyn\v{e}k Urban and Jana Voln\'a\newline Department of Mathematics and Descriptive Geometry\newline
        V\v{S}B-Technical University of Ostrava\newline
        17. listopadu 15, 708 33 Ostrava-Poruba, Czech Republic
}
\email{zbynek.urban@vsb.cz; jana.volna@vsb.cz}

\maketitle

\begin{abstract}
Systems of ordinary differential equations (or dynamical forms in Lagrangian mechanics), induced by embeddings of smooth fibered manifolds over one-dimensional basis, are considered in the class of variational equations. For a given non-variational system, conditions assuring variationality (the Helmholtz conditions) of the induced system with respect to a submanifold of a Euclidean space are studied, and the problem of existence of these ``variational submanifolds" is formulated in general and solved for second-order systems. The variational sequence theory on sheaves of differential forms is employed as a main tool for analysis of local and global aspects (variationality and variational triviality). The theory is illustrated by examples of holonomic constraints (submanifolds of a configuration Euclidean space) which are variational submanifolds in geometry and mechanics.
\\
\\
\textsc{Keywords:\ }{Euler--Lagrange equations; Helmholtz conditions; variational sequence; sheaf cohomology; fibered manifold; jet; submanifold}\\
\textsc{MSC:\ }{49Q99 \  58A15  \  58A20 \  14F40 \  70F20}
\end{abstract}

\section{Introduction}
\label{intro}
Our main objective in this article is to study restrictions of \textit{simple integral variational functionals}, defined on Euclidean spaces, to submanifolds of these spaces. The submanifolds may have specific topological structure; the question is what can be said in general about the influence of the topology on \textit{variationality} of differential equations, given on submanifolds, and on the existence and structure of the corresponding \textit{local} and \textit{global} variational principles. Specifically, given a system of (ordinary) differential equations on a Euclidean space, \textit{not} necessarily variational, we search for \textit{variational submanifolds} on which the constrained system becomes (locally or globally) variational. This concept within the theory of variational differential equations is new, and belongs to transformations of differential equations into variational ones; see Voicu and Krupka \cite{Voicu}, and references therein.

The method how to understand of what is going on when we restrict
an integral variational functional to a submanifold is the \textit{variational sequence theory}. Recall for example that a basic concept of the calculus of variations, the \textit{Euler-Lagrange mapping}, can be constructed (for variational functionals on fibered spaces) as the quotient mapping of the exterior derivative operator $d$, acting on differential forms, by the restriction of $d$ to the so called \textit{contact forms}. This construction can, in principle, be described in a simple way: the de Rham sequence on the corresponding underlying manifold should be factored through its \textit{contact subsequence}. Then it turns out, in particular, that one of the quotient morphisms coincides with the Euler-Lagrange mapping. The quotient (sheaf) sequence, the \textit{variational sequence}, then can be used to study the local and global properties of the Euler-Lagrange mapping. Note, however, that the variational sequence includes much more relevant local and global information on the variational functionals on fibered spaces.

To study the variationality concepts on submanifolds, we shall focus our attention to the restriction of variational sequences, defined over fibrations $\R\times \R^m$ over the \textit{real line} $\R$, representing \textit{time}, to subfibrations $\R\times Q$, where $Q$ is an embedded submanifold of $\R^m$ (the \textit{constraint submanifold}). This structure of underlying spaces corresponds from variational point of view to \textit{simple integral} problems for curves from $\R$ to $\R^m$ or from $\R$ to $Q$.

Section 2 is devoted to basic facts, related with the notion of a contact differential form on some jet prolongation of a fibered manifold. We extend these concepts to fibered manifolds smoothly embedded in a Euclidean space, and describe properties of the pull-back of contact forms.

Section 3 is divided in three parts. First we give a brief summary of basic assertions of the theory of variational sequences over one-dimensional bases (real line $\R$, circle $S^1$). Next we study a modification of the variational sequence theory to embedded submanifolds in Euclidean spaces. Our main purpose is to characterize global aspects of extremal equations subjected to holonomic constraints. In the third part we recall relations of variational sequences and the calculus of variations.

In Section 4 we investigate systems of second-order ordinary differential equations, which become (locally or globally) variational when contracted to an embedded submanifold of the Euclidean space $\R^m$. The general problem to be solved is twofold: (a) to determine the structure of a system such that the induced system is variational, and (b) to characterize the class of submanifolds, which induce variational systems. Our main results are summarized in two theorems, stating the corresponding Helmholtz-type conditions.

Section 5 contains some examples of holonomic systems and the corresponding \textit{variational submanifolds}. On one side we discuss the structure of constrained systems and obstructions for global variationality for topologically \textit{non}-equivalent, embedded submanifolds of $\R^3$ (the sphere $S^2$ and the M\"obius strip $M_{r,a}$). On the other side, we study the existence of variational submanifolds for concrete examples of second-order mechanical systems (e.g. the dumped oscillator, the gyroscopic-type system) and, in particular, systems possessing the spheres $S^1$ and $S^2$ as the variational submanifolds are obtained.

Our approach to variational submanifolds in its simplest setting
describes basic ideas of the theory of variational submanifolds as a part
of the variational sequence theory. It can be extended in several
directions, for instance to different underlying structures, related with
multiple integrals, partial differential equations, variational principles
in field theory with constraints, and variational sequences on Grassmann
fibrations, where the variationality conditions of the Helmholtz type are
available.

Basic reference for variational sequence theory in fibered manifolds over one-dimensional basis is Krupka \cite{SeqMech} (for Grassmann fibrations see Urban and Krupka \cite{Acta} and Urban \cite{Urban}). For the Helmholtz variationality conditions in mechanics and field theory we refer to Anderson and Duchamp \cite{And}, Krupka \cite{Praha}, and Sarlet \cite{Sarlet}. General variational sequence theory is explained in Krupka \cite{Seq}, \cite{Book}. In all these sources also generalities on global variational principles in fibered spaces can be found; for basic notions and assertions see also Brajer\v{c}\'{i}k and Krupka \cite{Brajer}, Krupka and Saunders \cite{Handbook}, Takens \cite{Takens}, and Voln\'{a} and Urban \cite{VU}.

\section{Contact forms on embedded submanifolds}
\label{sec:2}
Suppose that $\pi:Y\to X$ is a fibered manifold of dimension $m+1$, $m\geq 0$, over a \textit{one-dimensional} base manifold $X$ (\textit{fibered mechanics}). For any integer $r\geq 0$, $J^rY$ denotes the $r$-jet prolongation of $Y$, and $\pi^r:J^rY\to X$, $\pi^{r,s}:J^rY\to J^sY$, $0\leq s\leq r$, are the \textit{canonical jet projections}. For any open subset $W\subset Y$, we put $W^r=(\pi^{r,0})^{-1}(W)$, an open subset of $J^rY$. An element $J^r_x\gamma$ of $J^rY$ is the $r$-\textit{jet} of section $\gamma$ of $Y$ with \textit{source} $x\in X$ and \textit{target} $y=\gamma(x)$. The mapping $x\to J^r\gamma (x)=J^r_x\gamma$ is the $r$-\textit{jet prolongation} of $\gamma$. The set $J^rY$ is considered with its natural fibered manifold structure: if $(V,\psi)$, $\psi=(t,y^\sigma)$, $1\leq\sigma\leq m$, is a fibered chart on $Y$, the associated chart on $J^rY$ (resp. on $X$), reads $(V^r,\psi^r)$, $\psi^r=(t,y^\sigma,y_1^\sigma, y_2^\sigma,\ldots,y^\sigma_r)$, resp. $(U,\varphi)$, $\varphi=(t)$, where $V^r=(\pi^{r,0})^{-1}(V)$, resp. $U=\pi(V)$. Recall that by definition of an  $r$-jet $J^r_x\gamma\in V^r$ the coordinate $y^\sigma_l(J^r_x\gamma)$ is defined to be $D^l(y^\sigma \gamma\varphi^{-1})(\varphi(x))$.

We denote by $\Omega^r_0 W$ the ring of differentiable functions defined on $W^r\subset J^rY$, and by $\Omega^r_k W$ the $\Omega^r_0 W$-module of differentiable $k$-forms on $W^r$. The exterior algebra of differential forms on $W^r$ is denoted by $\Omega^r W$. For any function $f:W^r\to \R$, we put $hf = f\circ \pi^{r+1,r}$, and with the help of the chart formulas $h dt=dt,\ h dy^\sigma_l=y^\sigma_{l+1}dt$. These formulas define a (global) homomorphism of exterior algebras $h:\Omega^r W\to \Omega^{r+1}W$ called the $\pi$-\textit{horizontalization}. In particular, $h(df) = (df/dt)dt$, where $df/dt$ is the \textit{formal derivative} of function $f$. 

A $1$-form $\rho\in\Omega^r_1W$ is said to be \textit{contact}, if $h\rho = 0$ or, equivalently, if $(J^r\gamma)^*\rho$ vanishes for any section $\gamma$ of $\pi$, defined on an open subset of $X$. In arbitrary fibered chart $(V,\psi)$, $\psi=(t,y^\sigma)$, on $W\subset Y$, every contact $1$-form $\rho$ has an expression $\rho=A_\sigma^l \omega^\sigma_{l}$ (sum through $0\leq l\leq r-1$, $1\leq\sigma\leq m$) for some functions $A_\sigma^l: V^r\to\R$, where $\omega^\sigma_{l}=dy^\sigma_{l}-y^\sigma_{l+1}dt$. 
A basis of $1$-forms on $V^r$, constituted by the forms $dt$, $\omega^\sigma_{l}$, $dy^\sigma_{r}$, where $0\leq l\leq r-1$, is called the \textit{contact basis}. If $(\bar V, \bar\psi)$, $\bar\psi = (\bar t, \bar y^\nu)$, is another fibered chart on $W\subset Y$ such that $V\cap \bar V \neq\emptyset$, then the contact $1$-forms obey the transformation property $\bar\omega^\nu_{l} = \sum_{s=0}^l ({\pa \bar y^\nu_{l}}/{\pa y^\sigma_{s}})\omega^\sigma_{s}$. This means, in particular, that contact  1-forms $\omega^\sigma_l$ locally generate an ideal in the exterior algebra $\Omega^r W$ of differential forms on $W^r$, called the \textit{contact ideal}.

Let $\rho$ be a differential $1$-form on $W^r\subset J^rY$. The pull-back of $\rho$ with respect to the jet projection $\pi^{r+1,r}:J^{r+1}Y\to J^rY$ has a unique decomposition,
\begin{equation}\label{rozklad}
(\pi^{r+1,r})^*\rho = h\rho + p\rho,
\end{equation}
where $h\rho$ (resp. $p\rho$) is a $\pi^{r+1}$-horizontal (resp. contact) $1$-form on $W^{r+1}$. Indeed, if $\rho\in\Omega^r_1W$ is expressed by $\rho = Adt + B_\sigma^l dy^\sigma_{l}$, then it is easily seen that
\begin{equation*}
h\rho = \left( A + \sum_{l=0}^r B_\sigma^l y^\sigma_{l+1} \right) dt,\quad p\rho = \sum_{l=0}^r B_\sigma^l\omega^\sigma_{l}.
\end{equation*}
Decomposition (\ref{rozklad}) can be directly generalized for arbitrary $k$-forms $\rho\in\Omega^r_k W$. We get
\begin{equation}\label{rozklad3}
(\pi^{r+1,r})^*\rho = p_{k-1}\rho + p_k\rho,
\end{equation}
where $p_{k-1}\rho$, resp. $p_{k}\rho$, is the $(k-1)$-\textit{contact}, resp. $k$-\textit{contact} \textit{component} of $\rho$. This formula is the \textit{canonical decomposition} of $\rho$. If $\rho = \kappa\wedge dt + \chi$, where
\begin{equation*}
\begin{aligned}
\kappa &= \frac{1}{(k-1)!}\sum_{l_1,l_2,\ldots, l_{k-1} = 0}^r A_{\sigma_1\sigma_2\ldots\sigma_{k-1}}^{\,l_1\,\, l_2 \,\ldots \,\, l_{k-1}} dy^{\sigma_1}_{l_1}\wedge dy^{\sigma_2}_{l_2} \wedge\ldots\wedge dy^{\sigma_{k-1}}_{l_{k-1}},\\
\chi &= \frac{1}{k!}\sum_{l_1,l_2,\ldots, l_k = 0}^r B_{\sigma_1\sigma_2\ldots\sigma_{k}}^{\,l_1\,\, l_2 \,\ldots \,\, l_{k}} dy^{\sigma_1}_{l_1}\wedge dy^{\sigma_2}_{l_2} \wedge\ldots\wedge dy^{\sigma_{k}}_{l_{k}},
\end{aligned}
\end{equation*}
then
\begin{equation*}
\begin{aligned}
p_{k-1}\rho &= \frac{1}{(k-1)!}\sum_{l_1,l_2,\ldots,l_k=0}^r  \left( A_{\sigma_1\sigma_2\ldots\sigma_{k-1}}^{\,l_1\,\, l_2 \,\ldots \,\, l_{k-1}} + B_{\sigma_1\sigma_2\ldots\sigma_{k}}^{\,l_1\,\, l_2 \,\ldots \,\, l_{k}} y^{\sigma_k}_{l_k + 1}\right) \\
& \cdot\omega^{\sigma_1}_{l_1} \wedge \omega^{\sigma_2}_{l_2}\wedge\ldots \wedge \omega^{\sigma_{k-1}}_{l_{k-1}} \wedge dt,\\
p_{k}\rho &= \frac{1}{k!}\sum_{l_1,l_2,\ldots,l_k=0}^r  B_{\sigma_1\sigma_2\ldots\sigma_{k}}^{\,l_1\,\, l_2 \,\ldots \,\, l_{k}}
\omega^{\sigma_1}_{l_1} \wedge \omega^{\sigma_2}_{l_2} \wedge \ldots \wedge \omega^{\sigma_{k}}_{l_{k}}.
\end{aligned}
\end{equation*}

A $k$-form $\rho\in\Omega^r_kW$ is said to be $k$-\textit{contact}, if it coincides, up to the pull-back, with its $k$-contact component, that is, if $(\pi^{r+1,r})^*\rho = p_{k}\rho$; similarly, $\rho$ is called $(k-1)$-\textit{contact}, if $(\pi^{r+1,r})^*\rho = p_{k-1}\rho$. The transformation property of contact $1$-forms implies that $k$-contact, resp. $(k-1)$-contact, forms on $W^r$ constitute a submodule of the module $\Omega^r_k W$, which we denote by $\Omega^r_{k,c} W$, resp. $\Omega^r_{k-1,c} W$.

Let us remark that every $k$-form is contact for $k\geq 2$. We extend the notion of contactness for differential forms of arbitrary degree as follows. A $k$-form $\rho\in\Omega^r_k W$ is said to be \textit{strongly contact}, if at every point of $Y$ there exist a fibered chart $(V,\psi)$, $\psi=(t,y^\sigma)$, and a $(k-1)$-contact $(k-1)$-form $\eta$ on $V^r$ such that
\begin{equation}\label{strong}
p_k (\rho - d\eta) = 0
\end{equation}
holds on $V^r$. This definition is also motivated by the construction of a variational sequence (cf. Sec. \ref{sec:3}). It is easily seen that a $k$-form 
$\rho\in\Omega^r_k W$ is strongly contact if and only if at every point of $Y$ there exist a fibered chart $(V,\psi)$, $\psi=(t,y^\sigma)$, a $k$-contact $k$-form $\mu$, and a $(k-1)$-contact $(k-1)$-form $\eta$, defined on $V^r$, such that $\rho = \mu + d\eta$. This formula defines a direct sum decomposition over the chart neighborhood $V$,
\begin{equation*}
\Omega^r_k V \ni \rho \to (\mu,\eta)\in \Omega^r_{k,c}V \oplus  \Omega^r_{k-1,c}V.
\end{equation*}
The exterior derivative of a strongly contact form is again strongly contact.

Now we shall consider smooth embeddings into Euclidean spaces which are morphisms of fibered manifolds over \textit{one-dimensional} basis $\R$, and study basic properties of contact forms induced on embedded submanifolds. The canonical fibered coordinates on $\R^{m+1} = \R\times\R^m$ over $\R$ are denoted by $(t,x^\sigma)$, $1\leq\sigma\leq m$, and $(t,x^\sigma,x_1^\sigma, x_2^\sigma,\ldots,x^\sigma_{r})$ are the global associated coordinates on $J^r(\R\times\R^{m})$. As usual, for lower orders we denote the derivative coordinates by the dot sign: if $r=2$, we write $(t,x^\sigma,\dot{x}^\sigma, \ddot{x}^\sigma)$ instead of $(t,x^\sigma,x_1^\sigma, x_2^\sigma)$.

Let $m,n$ be positive integers, $n\leq m$, and let $Q$ be an $n$-dimensional embedded submanifold of the Euclidean space $\R^{m}$. The canonical embedding 
\begin{equation*}
\iota : \R\times Q\to \R\times\R^{m}
\end{equation*}
is a smooth morphism of fibered manifolds over the identity mapping of the real line $\R$. The canonical projections $\pi_Q:\R\times Q\to \R$ and $\pi:\R\times\R^{m}\to \R$ satisfy $\pi\circ\iota = \pi_Q$, and every section $\gamma_Q$ of $\pi_Q$ induces a section $\gamma$ of $\pi$, defined by the composition $\gamma = \iota \circ \gamma_Q$. Indeed, for a section $\gamma_Q$ of $\pi_Q$ we have $\pi\circ\gamma = \pi\circ \iota\circ\gamma_Q = \pi_Q \circ\gamma_Q = \id_\R $. Thus, we may define the $r$\textit{-th jet prolongation} $J^r\iota$ of $\iota$ by the formula
\begin{equation}\label{prolong}
J^r(\R\times Q) \ni J^r_t\gamma_Q \to J^r\iota (J^r_t\gamma_Q) = J^r_t(\iota\circ \gamma_Q) \in J^r(\R\times \R^{m}).
\end{equation}
Mapping (\ref{prolong}) acts on differential forms by means of the \textit{pull-back} operation. For arbitrary $k\geq 0$, we get a homomorphism of modules of differential forms,
\begin{equation}\label{pull}
J^r\iota{}^* : \Omega^r_k (\R\times \R^{m}) \to \Omega^r_k (\R\times Q).
\end{equation}

The following result is a straightforward application of the existence of partition of unity subordinate to an adapted atlas to a given embedded submanifold. In particular, it shows that the pull-back (\ref{pull}) is a surjective mapping.

\begin{lemma}\label{extension}
Let $Q$ be an embedded submanifold of $\R^{m}$, $W\subset \R\times Q$ an open set. Suppose $\eta\in\Omega^r_kW$ be a $k$-form on $W^r\subset J^r(\R\times Q)$. Then there exist a neighbourhood $U$ of $\iota(W)$ in $\R^{m+1}$ and a $k$-form $\tilde\eta\in\Omega^r_kU$ on $U^r\subset J^r\R^{m+1}$ such that $J^r\iota{}^*\tilde\eta = \eta$.
\end{lemma}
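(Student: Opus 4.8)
The plan is to solve the extension problem locally in coordinates adapted to the embedding, where it is essentially trivial, and then to patch the local solutions together by a partition of unity. Since $Q$ is embedded in $\R^{m}$, around each point of $\iota(W)$ I would choose a fibered chart $(V,\psi)$, $\psi=(t,\xi^\sigma)$, on an open set $V\subset\R^{m+1}$ in which $V\cap(\R\times Q)$ is the coordinate slice $\xi^{n+1}=\ldots=\xi^{m}=0$, so that $(t,\xi^{a})$, $1\leq a\leq n$, are fibered coordinates on the submanifold. In the associated jet coordinates $(t,\xi^\sigma,\xi^\sigma_1,\ldots,\xi^\sigma_r)$ on $V^r$, the prolongation $J^r(\R\times Q)$ is exactly the slice $\xi^\alpha_l=0$ ($n<\alpha\leq m$, $0\leq l\leq r$), and $J^r\iota$ is its inclusion, because all derivative coordinates of a section with $\xi^\alpha\equiv 0$ vanish. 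Consequently, in these coordinates the pull-back $J^r\iota{}^*$ acts simply by setting $\xi^\alpha_l=0$ and $d\xi^\alpha_l=0$ for $\alpha>n$.

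This makes the local extension immediate. Writing $\eta$ on such a chart as a combination of $dt$ and $d\xi^{a}_{l}$ ($a\leq n$) with coefficients that are functions of $(t,\xi^{a},\xi^{a}_1,\ldots,\xi^{a}_r)$ only, I define $\tilde\eta_V$ on $V^r$ by the \emph{same} coordinate expression, now read as a form on the ambient jet space, the coefficients being extended so as not to depend on the normal coordinates $\xi^\alpha_l$. Then $J^r\iota{}^*\tilde\eta_V=\eta$ on $V^r$, and, crucially, $\tilde\eta_V$ is defined over the \emph{entire} jet prolongation of the open set $V\subset\R^{m+1}$; this is exactly what lets the final form live on a set of the required type $U^r$.

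To globalize, cover $\iota(W)$ by such chart domains $V_\alpha$, put $U=\bigcup_\alpha V_\alpha$ (an open neighbourhood of $\iota(W)$ in $\R^{m+1}$ satisfying $J^r\iota(W^r)\subset U^r$), and take a smooth partition of unity $\{\rho_\alpha\}$ on $U$ subordinate to $\{V_\alpha\}$, which exists by paracompactness. Setting $\tilde\eta=\sum_\alpha(\rho_\alpha\circ\pi^{r,0})\,\tilde\eta_{V_\alpha}$ yields a well-defined smooth $k$-form on $U^r$ by local finiteness. Since $J^r\iota$ covers $\iota$, one has $\pi^{r,0}\circ J^r\iota=\iota\circ\pi_Q^{r,0}$, so naturality of the pull-back gives
\[
J^r\iota{}^*\tilde\eta=\sum_\alpha\big((\rho_\alpha\circ\iota)\circ\pi_Q^{r,0}\big)\,J^r\iota{}^*\tilde\eta_{V_\alpha}=\Big(\sum_\alpha(\rho_\alpha\circ\iota)\circ\pi_Q^{r,0}\Big)\,\eta=\eta,
\]
because $\sum_\alpha\rho_\alpha\equiv 1$ on $U$, hence $\sum_\alpha\rho_\alpha\circ\iota\equiv 1$ on $W$.

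The one point deserving care, and the reason the partition of unity is indispensable, is that the local extensions $\tilde\eta_{V_\alpha}$ need not agree away from $\R\times Q$; this ambiguity is harmless because each of them pulls back to $\eta$, so it is annihilated by $J^r\iota{}^*$ and the convex combination still pulls back to $\eta$. I expect the only genuinely substantive step to be the clean coordinate description of $J^r\iota$ in an adapted chart (equivalently, the surjectivity of the local pull-back); once this is established, the globalization is purely formal.
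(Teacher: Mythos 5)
Your proof is correct and follows exactly the route the paper itself indicates (the paper gives no detailed proof, remarking only that the lemma is ``a straightforward application of the existence of partition of unity subordinate to an adapted atlas''): trivial extension in adapted slice charts, where $J^r\iota$ is the inclusion of the jet-coordinate slice $\xi^\alpha_l=0$, followed by patching with a partition of unity pulled back through $\pi^{r,0}$. The only implicit step is shrinking each chart domain $V_\alpha$ so that $V_\alpha\cap(\R\times Q)\subset\iota(W)$ (so that $\eta$ is defined on the whole slice), which is routine since $\R\times Q$ carries the subspace topology.
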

\begin{remark}
Note that Lemma \ref{extension} holds also for contact and strongly contact forms. We shall utilize this lemma in the study of variational sequences (Sec. \ref{subsec3}).
\end{remark}

For further considerations we find a chart expression of $J^r\iota$ (\ref{prolong}) by a straightforward calculation.

\begin{lemma}\label{lem:prolong}
Let $(U,\varphi)$, $\varphi = (t, q^i)$, $1\leq i\leq n$, be a fibered chart on $\R\times Q$. Suppose the embedding $\iota: \R\times Q\to \R\times\R^{m}$ is expressed by equations
\begin{equation}\label{iota}
t\circ\iota = t,\quad x^\sigma\circ\iota = f^\sigma (q^i),\quad 1\leq i\leq n,\ 1\leq\sigma\leq m.
\end{equation}
Then the $r$-th prolongation $J^r\iota$ of $\iota$ is expressed in the associated chart $(U^r,\varphi^r)$, $\varphi^r=(t,q^i,q^i_1,q^i_2,\ldots,q^i_{r})$, on $J^r(\R\times Q)$ by the equations
\begin{equation}\label{chartex}
\begin{aligned}
t\circ J^r\iota &= t,\quad x^\sigma\circ J^r\iota = f^\sigma (q^i),\quad
\dot x^\sigma\circ J^r\iota = \frac{d}{dt} f^\sigma (q^i),\\
\ddot x^\sigma\circ J^r\iota &=\frac{d^2}{dt^2} f^\sigma (q^i),\quad
x^\sigma_{l}\circ J^r\iota  = \frac{d^l}{dt^l}f^\sigma (q^i),
\end{aligned}
\end{equation}
where $0\leq l\leq r$.
\end{lemma}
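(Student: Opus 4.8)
The plan is to evaluate both sides of \eqref{chartex} at an arbitrary point $J^r_t\gamma_Q\in J^r(\R\times Q)$ and to compare them by means of the defining formula for jet coordinates recalled in Section \ref{sec:2}, together with the chain rule. First I would represent the point $J^r_t\gamma_Q$ by a section $\gamma_Q$ of $\pi_Q$, written in the chart $(U,\varphi)$ via its components $q^i\circ\gamma_Q=\phi^i$, a family of real functions of the base variable. By the definition $q^i_l(J^r_t\gamma_Q)=D^l(\phi^i\circ\varphi^{-1})(\varphi(t))$, the base and derivative coordinates of this point are the successive ordinary derivatives of the $\phi^i$.

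Next I would apply the definition \eqref{prolong}, namely $J^r\iota(J^r_t\gamma_Q)=J^r_t(\iota\circ\gamma_Q)$, and read off the coordinates of the jet on the right. Since $\iota$ satisfies \eqref{iota}, the composed section $\iota\circ\gamma_Q$ has components $t\circ(\iota\circ\gamma_Q)=t$ and $x^\sigma\circ(\iota\circ\gamma_Q)=f^\sigma\circ\phi$. Applying the defining formula for jet coordinates once more, now to $\iota\circ\gamma_Q$, gives $t\circ J^r\iota(J^r_t\gamma_Q)=t$, $x^\sigma\circ J^r\iota(J^r_t\gamma_Q)=f^\sigma(\phi(t))$, and
\[
x^\sigma_l\circ J^r\iota(J^r_t\gamma_Q)=D^l\big((f^\sigma\circ\phi)\circ\varphi^{-1}\big)(\varphi(t)),\quad 0\le l\le r.
\]
This already settles the first two equations in \eqref{chartex}, since $f^\sigma(\phi(t))=f^\sigma(q^i(J^r_t\gamma_Q))$.

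It remains to identify the ordinary $l$-th derivative $D^l(f^\sigma\circ\phi)$ with the value of the formal derivative $\tfrac{d^l}{dt^l}f^\sigma$ at $J^r_t\gamma_Q$, and I would do this by induction on $l$. The key point is the defining property of the horizontalization: for any function $g$ on the jet space one has $D\big(g\circ J^r\gamma_Q\circ\varphi^{-1}\big)=\big((dg/dt)\circ J^{r+1}\gamma_Q\circ\varphi^{-1}\big)$ in the coordinate $t$, that is, the formal derivative is constructed precisely so that its value along a prolonged section equals the ordinary derivative of the pulled-back function. For $l=0$ there is nothing to prove; applying $d/dt$ to $g=f^\sigma(q^i)$ yields $(\pa f^\sigma/\pa q^i)\,q^i_1$, whose value at $J^r_t\gamma_Q$ is $(\pa f^\sigma/\pa q^i)(\phi(t))\,D\phi^i(t)=D(f^\sigma\circ\phi)(t)$ by the chain rule. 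Iterating one derivative at a time advances simultaneously the order of the formal derivative on the left and the order of the ordinary derivative on the right. Note that because $l\le r$, the function $\tfrac{d^l}{dt^l}f^\sigma$ involves only the coordinates $q^i_0,\ldots,q^i_l$ and is therefore well defined and evaluated on $J^r_t\gamma_Q$.

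The only genuine obstacle is the bookkeeping in this induction: one must check that applying $d/dt$ to an expression already containing the derivative coordinates $q^i_1,\ldots,q^i_{l-1}$ again reproduces, under evaluation at the jet, the next ordinary derivative. This is exactly the Fa\`a di Bruno expansion of $D^l(f^\sigma\circ\phi)$, and it follows cleanly from the commutation identity above rather than from any explicit combinatorial formula. Since the point $J^r_t\gamma_Q$ was arbitrary, the coordinate identities \eqref{chartex} hold on all of $U^r$, which completes the proof.
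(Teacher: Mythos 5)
Your proof is correct, and it coincides with what the paper intends: the paper offers no written proof of this lemma, stating only that the chart expression follows ``by a straightforward calculation,'' and your argument --- evaluating both sides at an arbitrary jet $J^r_t\gamma_Q$, using the definition $x^\sigma_l(J^r_x\gamma)=D^l(x^\sigma\gamma\varphi^{-1})(\varphi(x))$ together with $J^r\iota(J^r_t\gamma_Q)=J^r_t(\iota\circ\gamma_Q)$, and then identifying $D^l(f^\sigma\circ\phi)$ with the formal derivative $\tfrac{d^l}{dt^l}f^\sigma$ by induction via the commutation of the formal derivative with pullback along prolonged sections --- is exactly that calculation carried out carefully. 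Your remark that $\tfrac{d^l}{dt^l}f^\sigma$ depends only on $q^i_0,\ldots,q^i_l$ with $l\leq r$, so that the evaluation on $J^r_t\gamma_Q$ is well defined, is the right point to check and is handled properly.
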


The following lemma describes properties of contact forms with respect to the pull-back $J^r\iota{}^*$ (\ref{pull}). In particular, we show that the property of contactness and strongly contactness is preserved along a jet prolongation of an embedding.

\begin{lemma}\label{lem:contact2}

{\rm (a)} If a $1$-form $\rho\in\Omega^r_1 R^{m+1}$ is contact, then $J^r\iota{}^*\rho\in\Omega^r_1 (\R\times Q)$ is contact.

{\rm (b)} The pull-back $J^r\iota{}^*\rho$ of any strongly contact $k$-form $\rho$ on $J^r\R^{m+1}$ is strongly contact.

{\rm (c)} Suppose the embedding $\iota:\R\times Q\to\R^{m+1}$ is expressed with respect to a fibered chart $(U,\varphi)$, $\varphi=(t,q^i)$, on $S$ by equations {\rm (\ref{iota})}. Then pull-backs of contact $1$-forms $\omega^\sigma_{l}$, $0\leq l\leq r-1$, on $J^r\R^{m+1}$ with respect to $J^r\iota$ read
\begin{equation}\label{pullcont}
\begin{aligned}
J^r\iota{}^*\omega^\sigma &= \frac{\pa f^\sigma}{\pa q^i}\eta^i,\\
J^r\iota{}^*\dot\omega^\sigma &=\frac{d}{dt}\left( \frac{\pa f^\sigma}{\pa q^i} \right)\eta^i +\frac{\pa f^\sigma}{\pa q^i}\dot\eta^i,\\
J^r\iota{}^*\ddot\omega^\sigma &=\frac{d^2}{dt^2}\left( \frac{\pa f^\sigma}{\pa q^i} \right)\eta^i + 2 \frac{d}{dt}\left( \frac{\pa f^\sigma}{\pa q^i} \right)\dot\eta^i +\frac{\pa f^\sigma}{\pa q^i}\ddot\eta^i,\\
J^r\iota{}^*\omega^\sigma_{l} &= \sum_{s=0}^l \frac{\pa}{\pa q^i_{s}} \left( \frac{d^l f^\sigma}{dt^l}  \right) \eta^i_{s},\quad 0\leq l\leq r-1,
\end{aligned}
\end{equation}
where $\eta^i_{s} = dq^i_{s} - q^i_{s+1} dt$, $0\leq s\leq r-1$, $1\leq i\leq n$, are contact forms on $U^r\subset J^r(\R\times Q)$.
\end{lemma}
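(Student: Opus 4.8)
The plan is to establish part {\rm (c)} first by direct computation, since both {\rm (a)} and {\rm (b)} follow from it together with the fact that $J^r\iota{}^*$ is a homomorphism of exterior algebras. The only external inputs are the chart expression of $J^r\iota$ from Lemma~\ref{lem:prolong}, namely $t\circ J^r\iota = t$ and $x^\sigma_{l}\circ J^r\iota = d^l f^\sigma/dt^l$, and the elementary fact that pull-back commutes with the exterior derivative, $J^r\iota{}^*\circ d = d\circ J^r\iota{}^*$.

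For {\rm (c)}, abbreviate $F = d^l f^\sigma/dt^l$, regarded as a function on $J^r(\R\times Q)$ in the coordinates $(t,q^i,q^i_1,\ldots,q^i_r)$. Because $f^\sigma$ depends on the $q^i$ alone, $F$ carries no explicit $t$-dependence, so $\pa F/\pa t = 0$ and the ordinary and formal derivatives of $F$ read
\begin{equation*}
dF = \frac{\pa F}{\pa q^i_{s}}\,dq^i_{s},\qquad \frac{dF}{dt} = q^i_{s+1}\frac{\pa F}{\pa q^i_{s}}
\end{equation*}
(sum over $0\leq s\leq l$, higher partials vanishing). Applying $J^r\iota{}^*$ to $\omega^\sigma_{l} = dx^\sigma_{l} - x^\sigma_{l+1}\,dt$ and using Lemma~\ref{lem:prolong} twice, first for $x^\sigma_{l}\circ J^r\iota = F$ and then for $x^\sigma_{l+1}\circ J^r\iota = dF/dt$, I would obtain
\begin{equation*}
J^r\iota{}^*\omega^\sigma_{l} = dF - \frac{dF}{dt}\,dt = \frac{\pa F}{\pa q^i_{s}}\bigl( dq^i_{s} - q^i_{s+1}\,dt\bigr) = \frac{\pa F}{\pa q^i_{s}}\,\eta^i_{s},
\end{equation*}
which is precisely the asserted formula. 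The lower-order identities then arise by expanding $\pa F/\pa q^i_{s}$ through the Leibniz rule; for $l=2$ one checks that $\pa F/\pa q^k$, $\pa F/\pa q^k_1$, $\pa F/\pa q^k_2$ reproduce the coefficients $d^2(\pa f^\sigma/\pa q^k)/dt^2$, $2\,d(\pa f^\sigma/\pa q^k)/dt$, $\pa f^\sigma/\pa q^k$, the factor $2$ being the expected binomial weight.

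Part {\rm (a)} is then immediate: an arbitrary contact $1$-form on $J^r\R^{m+1}$ has the shape $\rho = A^l_\sigma\,\omega^\sigma_{l}$ with $0\leq l\leq r-1$, whence $J^r\iota{}^*\rho = (A^l_\sigma\circ J^r\iota)\,J^r\iota{}^*\omega^\sigma_{l}$ is, by {\rm (c)}, an $\Omega^r_0(\R\times Q)$-combination of the contact forms $\eta^i_{s}$, and hence contact.

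For {\rm (b)} I would first record that, since $J^r\iota{}^*$ is an algebra homomorphism carrying contact $1$-forms to contact $1$-forms, it sends every $k$-fold product $\omega^{\sigma_1}_{l_1}\wedge\cdots\wedge\omega^{\sigma_k}_{l_k}$ to a $k$-fold product of the $\eta^i_{s}$; consequently the pull-back of a $k$-contact (resp. $(k-1)$-contact) form is again $k$-contact (resp. $(k-1)$-contact). Given a strongly contact $\rho$, I use the equivalent local characterization recalled above: near any point of $\iota(\R\times Q)$ there is a fibered chart on which $\rho = \mu + d\eta$ with $\mu$ $k$-contact and $\eta$ $(k-1)$-contact. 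Pulling back along the embedding and using $J^r\iota{}^*d\eta = d\,J^r\iota{}^*\eta$ gives $J^r\iota{}^*\rho = J^r\iota{}^*\mu + d\,J^r\iota{}^*\eta$, a sum of a $k$-contact form and the differential of a $(k-1)$-contact form on a fibered chart about the corresponding point of $\R\times Q$; thus $J^r\iota{}^*\rho$ is strongly contact. I expect the only delicate point to be the computation {\rm (c)}: one must keep the formal-derivative bookkeeping straight and confirm that the repeated partial derivatives of $F$ carry exactly the binomial weights appearing in the explicit low-order formulas. The passage to {\rm (a)} and {\rm (b)} is then purely formal, the only mild care being that the local decomposition $\rho=\mu+d\eta$ is transported through the embedding by choosing the fibered chart on $\R\times Q$ inside the preimage of the chart on $\R^{m+1}$.
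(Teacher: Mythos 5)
Your proposal is correct, but it organizes the argument differently from the paper, most notably in part (a). The paper proves (a) intrinsically, straight from the definition of contactness: for any section $\gamma_Q$ of $\pi_Q$ one has $J^r\gamma_Q{}^*J^r\iota{}^*\rho = J^r(\iota\circ\gamma_Q)^*\rho$, which vanishes because $\iota\circ\gamma_Q$ is a section of $\pi$; this is coordinate-free, needs neither Lemma \ref{lem:prolong} nor part (c), and would work verbatim for any morphism of fibered manifolds over $\id_\R$. You instead prove (c) first --- the computation $J^r\iota{}^*\omega^\sigma_l = dF - (dF/dt)\,dt = (\pa F/\pa q^i_s)\,\eta^i_s$ with $F = d^lf^\sigma/dt^l$ is exactly right, including the binomial weights, and supplies the ``straightforward calculation'' the paper omits --- and then deduce (a) from the chart characterization $\rho = A^l_\sigma\,\omega^\sigma_l$ of contact $1$-forms (legitimate here since the canonical coordinates on $\R\times\R^m$ are global, and contactness is a local condition on $\R\times Q$). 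Your route buys a self-contained lemma with explicit formulas driving everything, at the cost of leaning on the local generation of contact (and, in part (b), of $k$-contact) forms by the $\omega^\sigma_l$, $l\leq r-1$; the paper's route for (a) is shorter and more robust. Part (b) is essentially the paper's proof: decompose $\rho = \mu + d\eta$, use commutation of pull-back with $d$ and multiplicativity of pull-back over wedge products; if anything you are slightly more careful than the paper in stressing that the decomposition is only local and in placing the fibered chart on $\R\times Q$ inside $\iota^{-1}$ of the chart domain on $\R^{m+1}$.
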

\begin{proof}
(a) Suppose $\rho$ is a contact $1$-form on $J^r\R^{m+1}$, that is by definition, the pull-back $J^r\gamma{}^*\rho$ vanishes for every smooth section $\gamma$ of $\pi:\R^{m+1}\to\R$. Let $\gamma_Q$ be a section of $\pi_Q:\R\times Q\to\R$. We compute the pull-back $J^r\gamma_Q{}^* J^r\iota{}^*\rho$. For every point $t$ from the domain of $\gamma_Q$, $(J^r\iota \circ J^r\gamma_Q)(t) = J^r\iota (J^r_t\gamma_Q) = J^r_t (\iota\circ\gamma_Q) = J^r(\iota\circ\gamma_Q)(t)$, hence we get
\begin{equation*}
J^r\gamma_Q{}^* J^r\iota{}^*\rho = (J^r\iota \circ J^r\gamma_Q)^*\rho = J^r(\iota\circ\gamma_Q)^*\rho.
\end{equation*}
Since $\iota\circ\gamma_Q$ is a section of $\R^{m+1}$, by the assumption on contactness of $\rho$ this expression vanishes hence $1$-form $J^r\iota{}^*\rho$ on $J^r(\R\times Q)$ is a also contact, as required.

(b) For $k=1$ the assertion is equivalent with (a). Let $k\geq 2$, and suppose $\rho\in\Omega^r_k\R^{m+1}$ is decomposed as $\rho = \mu + d\eta$, where $\mu$, resp. $\eta$, is a $k$-contact $k$-form, resp. $(k-1)$-contact $(k-1)$-form. Then $J^r\iota{}^*\rho = J^r\iota{}^*\mu + d J^r\iota{}^*\eta$. Since a pull-back of wedge products equals wedge products of pull-backs, we obtain by the assertion (a) that also $J^r\iota{}^*\mu$, resp. $J^r\iota{}^*\eta$, is a $k$-contact $k$-form, resp. $(k-1)$-contact $(k-1)$-form, on $J^r(\R\times Q)$. This means, however, that $J^r\iota{}^*\rho\in\Omega^r_k(\R\times Q)$ is strongly contact.

(c) Formulas (\ref{pullcont}) are obtained by a staighforward calculation.
\end{proof}

\begin{remark}[Adapted charts]\label{adapted}
In order to simplify calculations and proofs, we may take advantage of adapted charts to submanifolds. Let $Q$ be an embedded $n$-dimensional submanifold of $Y$, where $\dim Y = m$, $n\leq m$. 
Recall that a chart $(V,\psi)$, $\psi = (u^\sigma)$, on $Y$ is \textit{adapted} to $Q$, if there exist constants $c^{n+1},c^{n+2},\ldots,c^{m}\in\R$ such that the set $Q$ is expressed in $(V,\psi)$ by equations $u^\kappa\circ\iota = c^\kappa$, $n+1\leq \kappa\leq m$, where $\iota:Q\to Y$ is the canonical smooth embedding. In this case, the set $Q\cap V$ is also called an $n$-\textit{dimensional slice} in an open subset $V$ of $Y$. Without loss of generality, we may restrict ourselves to adapted charts to submanifold $Q$ which are \textit{rectangle} hence the canonical embedding $\iota$ has equations
\begin{equation}\label{adp}
\begin{aligned}
u^j\circ\iota &= u^j|_{V\cap Q},\ 1\leq j\leq n,\quad
u^\kappa\circ\iota = 0,\ n+1\leq \kappa\leq m.
\end{aligned}
\end{equation}
\end{remark}
If $(V,\psi)$, $\psi=(u^\sigma)$, is a chart on $\R^{m}$, adapted to $n$-dimensional submanifold $Q$ of $\R^{m}$, and satisfying (\ref{adp}), one can easily observe that the chart expressions (\ref{chartex}) of Lemma \ref{lem:prolong}, resp. (\ref{pullcont}) of Lemma \ref{lem:contact2}, with respect to the associated chart on $Q$ reduce to \textit{non}-trivial expressions for $q^j_{l}\circ J^r\iota$, resp. $J^r\iota{}^*\omega^j_{l}$, only.

\section{Variational sequences}
\label{sec:3}
\subsection{Variational sequences over fibered manifolds}
\label{sec:seq}
Let us recall basic results. Suppose $Y$ to be a fibered manifold over a one-dimensional base $X$, where $\dim Y = m+1$, $m\geq 0$. Let $k\geq 0$, $r\geq 0$, be integers. Consider the sheaf of smooth $k$-forms over $J^rY$ and its direct image with respect to the jet projection $\pi^{r,0}:J^rY\to Y$, denoted by $\Omega^r_k$. Similarly, $\Omega^r_{k,c}$ (resp. $\Omega^r_{k-1,c}$) is the direct image of the sheaf of $k$-contact $k$-forms (resp. $(k-1)$-contact $(k-1)$-forms) over $J^rY$ with respect to $\pi^{r,0}$.  We let denote $\Omega^r_{0,c}=\{ 0\}$, $\Omega^r_{k,c}=\ker p_{k-1}$, and $\Theta^r_k = \Omega^r_{k,c} + d \Omega^r_{k-1,c}$, $k\geq 1$, where $p_{k-1}:\Omega^r_k\to \Omega^{r+1}_{k-1,c}$ is the sheaf space morphism assigning to a $k$-form its $(k-1)$-contact component (see Sec. 2, (\ref{rozklad3})), and $d \Omega^r_{k-1,c}$ is the image sheaf of $\Omega^r_{k-1,c}$ with respect to the exterior derivative morphism $d$.

For arbitrary open subset $W$ of $Y$, we have $\Omega^r_kW$ (resp. $\Omega^r_{k,c}W$) the Abelian group of differential $k$-forms (resp. $k$-contact $k$-forms), defined on an open subset $W^r=(\pi^{r,0})^{-1}(W)$ of the $r$-th jet prolongation $J^rY$. $d \Omega^r_{k-1,c}W$ is the Abelian group of $k$-forms on $W^r$, locally expressed as exterior derivatives of $(k-1)$-contact $(k-1)$-forms, and $\Theta^r_k W$ is a subgroup (but \textit{not} a submodule) of $\Omega^r_kW$, elements of which are the strongly contact $k$-forms on $W^r$ (see Sec. 2, (\ref{strong})). 

From decomposition properties of strongly contact forms, we observe that the exterior derivative operator $d$ defines a subsequence
\begin{equation}\label{subsequence}
\begin{aligned}
& 0\to\Theta^r_1 \to \Theta^r_2 \to\cdots \to \Theta^r_M  \to0,
\end{aligned}
\end{equation}
of the \textit{de Rham sequence} $0\to\R\to\Omega^r_0\to\Omega^r_1\to\Omega^r_2\to\ldots\to\Omega^r_N \to 0$, where $M=mr+1$, $N=\dim J^r Y = m(r+1) +1$. (\ref{subsequence}) is the \textit{contact subsequence}, and the quotient sequence
\begin{equation}\label{quotient}
\begin{aligned}
&0\to\R_Y\to\Omega^r_0 \to\Omega^r_1  / \Theta^r_1 \to \Omega^r_2  / \Theta^r_2 
\to \cdots\\
&\qquad \to \Omega^r_M  / \Theta^r_M  \to \Omega^r_{M+1}  \to \cdots \to \Omega^r_N  \to 0,
\end{aligned}
\end{equation}
briefly denoted by $0\to\R_Y\to\V^r_Y$, is the \textit{variational sequence of order $r$ over} $Y$. An element of $\Omega^r_k / \Theta^r_k$, represented by a $k$-form $\rho\in\Omega^r_kW$, is denoted by $[\rho]$. Morphisms $E_k : \Omega^r_k / \Theta^r_k \to \Omega^r_{k+1} / \Theta^r_{k+1}$ in (\ref{quotient}) are defined by
\begin{equation}\label{quodef}
E_k ([\rho]) = [d\rho].
\end{equation}
Summarizing, we have the commutative diagram
\begin{equation*}
\begin{diagram}
&&&&&&0&&0&&0\\
&&&&&&\uTo&&\uTo&&\uTo\\
&&&&&&\Omega^r_1/\Theta^r_1&\rTo&\Omega^r_2/\Theta^r_2&\rTo&\Omega^r_3/\Theta^r_3&\rTo&\cdots\\
&&&&&\ruTo&\uTo&&\uTo&&\uTo\\
0&\rTo&\R&\rTo&\Omega^r_0&\rTo&\Omega^r_1&\rTo&\Omega^r_2&\rTo&\Omega^r_3&\rTo&\cdots\\
&&&&\uTo&&\uTo&&\uTo&&\uTo\\
&&&&0&\rTo&\Theta^r_1&\rTo&\Theta^r_2&\rTo&\Theta^r_3&\rTo&\dots\\
&&&&&&\uTo&&\uTo&&\uTo\\
&&&&&&0&&0&&0
\end{diagram}
\end{equation*}

Now we describe fundamental properties of the variational sequence theory.

\begin{theorem}\label{th1}
The contact subsequence {\rm (\ref{subsequence})} is exact.
\end{theorem}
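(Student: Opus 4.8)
The plan is to prove exactness stalkwise. Exactness of a sequence of sheaves is equivalent to exactness of the induced sequence of stalks, so I fix a point of $Y$ and work on a fibered chart $(V,\psi)$, $\psi=(t,y^\sigma)$, around it whose fibre coordinates range over a star-shaped domain, so that the fibrewise Poincar\'e lemma applies to the de Rham complex on $V^r$. Since the exterior derivative carries strongly contact forms to strongly contact forms (Section~\ref{sec:2}), (\ref{subsequence}) is a genuine complex, and exactness amounts to showing, on such a chart, that every closed strongly contact form is the differential of a strongly contact form.

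First I reduce to a single relative statement. Let $\rho\in\Theta^r_kV$ with $d\rho=0$. By the splitting of strongly contact forms recalled in Section~\ref{sec:2}, write $\rho=\mu+d\eta$ with $\mu\in\Omega^r_{k,c}V$ a $k$-contact $k$-form and $\eta\in\Omega^r_{k-1,c}V$ a $(k-1)$-contact $(k-1)$-form. Then $d\rho=d\mu$, so $d\mu=0$. Suppose I can find a $(k-1)$-contact $(k-1)$-form $\alpha$ with $d\alpha=\mu$; then $\rho=d(\alpha+\eta)$, and $\alpha+\eta\in\Omega^r_{k-1,c}V\subseteq\Theta^r_{k-1}V$ is strongly contact, which exhibits $\rho$ in the image of $d\colon\Theta^r_{k-1}\to\Theta^r_k$. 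Hence the theorem reduces to the relative Poincar\'e statement: every closed $k$-contact $k$-form is the exterior derivative of a $(k-1)$-contact $(k-1)$-form.

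For $k=1$ this is proved by horizontalization: a closed contact $1$-form is $\mu=df$ by Poincar\'e, and contactness gives $h\mu=(df/dt)\,dt=0$; since $df/dt=\partial f/\partial t+\sum_{l}y^\sigma_{l+1}\,\partial f/\partial y^\sigma_l$, comparing the coefficient of the highest jet variable $y^\sigma_{r+1}$ and descending forces $f$ to be constant, whence $\mu=0$ --- consistent with $\Omega^r_{0,c}=\{0\}$, and giving injectivity at $\Theta^r_1$. For $k\geq2$ I would construct a contact-adapted homotopy from the radial contraction $i_\xi$ along the fibre of $\pi^r\colon J^rY\to X$, with $\xi$ the fibre Euler field. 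A direct check on the generators, e.g. $i_\xi\omega^\sigma_l=y^\sigma_l$ and $i_\xi(\omega^{\sigma}_{l}\wedge\omega^{\tau}_{s})=y^\sigma_l\,\omega^\tau_s-y^\tau_s\,\omega^\sigma_l$, together with $\phi_s^*\omega^\sigma_l=s\,\omega^\sigma_l$ for the fibre scaling $\phi_s$, shows that the contraction lowers the number of contact factors by exactly one while the scaling preserves it; integrating to the homotopy operator $A$ (whose boundary term vanishes on forms of positive contact degree, so $dA+Ad=\id$ there), $A$ therefore maps $k$-contact forms into $(k-1)$-contact forms. Applying $dA+Ad=\id$ to the closed form $\mu$ yields $\mu=d(A\mu)$ with $A\mu$ the desired $(k-1)$-contact primitive.

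The top of the sequence needs no separate effort: since $\Theta^r_{M+1}=0$ (the complex (\ref{subsequence}) terminates at $\Theta^r_M$) and $d$ preserves strong contactness, every $\rho\in\Theta^r_M$ has $d\rho\in\Theta^r_{M+1}=0$, hence is closed, and the relative statement applies verbatim. The main obstacle is the one genuinely nontrivial point in the previous paragraph: producing a homotopy that respects, rather than merely contracts, the contact filtration. Ordinary contractibility alone yields a primitive of uncontrolled contact degree; the content of the theorem is that the special transformation law $\bar\omega^\nu_l=\sum_{s}(\partial\bar y^\nu_l/\partial y^\sigma_s)\,\omega^\sigma_s$ and the bigraded decomposition (\ref{rozklad3}) force the fibre contraction to drop contact degree by precisely one, so that the primitive can be chosen one contact degree lower. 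Once this bookkeeping is secured, interior exactness and both endpoints follow uniformly, giving exactness of (\ref{subsequence}) on stalks, hence as sheaves.
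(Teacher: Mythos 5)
The paper itself states Theorem \ref{th1} without proof --- it is recalled from Krupka's variational sequence theory (\cite{SeqMech}, \cite{Seq}, \cite{Book}) --- and your argument is essentially the standard proof from those sources: stalkwise reduction to a fibered chart, the splitting $\rho=\mu+d\eta$ of a strongly contact form, injectivity at $\Theta^r_1$ by horizontalization (your descent on the coefficient of $y^\sigma_{r+1}$ in $df/dt=0$ is the usual argument), and a fibre-scaling homotopy operator along the fibres of $\pi^r$ whose contraction lowers the contact degree by exactly one, with the boundary term $\phi_0^*\mu$ vanishing on forms of positive contact degree. Your sketch is correct; the one step you rightly flag as the genuine bookkeeping --- that $A$ carries $\Omega^r_{k,c}$ into $\Omega^r_{k-1,c}$ --- rests on the chart characterization of $\ker p_{k-1}$ as the span of wedges $\omega^{\sigma_1}_{l_1}\wedge\cdots\wedge\omega^{\sigma_k}_{l_k}$ with all $l_i\leq r-1$ (vanishing of the $(k-1)$-contact component forces all terms containing $dy^\sigma_r$ or $dt$ to drop out, since the coefficients do not depend on $y^\sigma_{r+1}$), and this is exactly what makes your computation on generators, $i_\xi\omega^\sigma_l=y^\sigma_l$ and $\phi_s^*\omega^\sigma_l=s\,\omega^\sigma_l$, sufficient. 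One simplification at the top of the sequence: since there are only $mr$ independent contact generators $\omega^\sigma_l$ on $V^r$ and $M=mr+1$, one has $\Omega^r_{M,c}=0$ (and indeed $\Theta^r_{M+1}=0$, as you say), so every germ in $\Theta^r_M$ is already of the form $d\eta$ with $\eta\in\Omega^r_{M-1,c}\subseteq\Theta^r_{M-1}$, and exactness at the last spot follows without invoking the homotopy at all.
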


Since exactness of a quotient sequence follows from exactness of its subsequence of Abelian groups, Theorem \ref{th1} immediately implies that the variational sequence is also \textit{exact} hence $0\to\R_Y\to\V^r_Y$ (\ref{quotient}) constitutes a resolution of the constant sheaf $\R_Y$.

The next result plays an important role in the study of \textit{order reduction} of variational objects, embeddings of $r$-th order variational sequences into sequences of order $r+1$, and it serves as the main tool for finding suitable local representatives of classes in the variational sequence. Since the quotient groups are determined up to an isomorphism, we are allowed to consider the classes in the variational sequence (\ref{quotient}) as elements of different sheaves.

\begin{theorem}\label{th2}
The quotient mapping $\Omega^r_kW/\Theta^r_kW \to \Omega^{r+1}_kW/\Theta^{r+1}_kW$, corresponding to the pull-back $\Omega^r_kW\ni\rho\to(\pi^{r+1,r})^*\rho\in \Omega^{r+1}_kW$, is injective.
\end{theorem}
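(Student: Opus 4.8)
The plan is to unwind the statement into a single implication about strongly contact forms and then to isolate the one genuinely nontrivial point, namely the order of a local primitive. Since $\pi^{r+1,r}$ is a surjective submersion, the pull-back $(\pi^{r+1,r})^*\colon\Omega^r_kW\to\Omega^{r+1}_kW$ is injective on forms; as it commutes with $d$ and sends $k$-contact (resp. $(k-1)$-contact) forms to forms of the same type (using $(\pi^{s+1,s})^*\omega^\sigma_l=\omega^\sigma_l$ and $(\pi^{s+1,s})^*dt=dt$), it maps $\Theta^r_kW$ into $\Theta^{r+1}_kW$, so the quotient map is well defined. Injectivity is then equivalent to the implication: if $(\pi^{r+1,r})^*\rho$ is strongly contact on $W^{r+1}$, then $\rho$ is strongly contact on $W^r$. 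I would first record the reformulation of strong contactness coming from \eqref{rozklad3}: a $k$-form $\sigma$ is strongly contact if and only if locally $p_k(\sigma-d\xi)=0$ for some $(k-1)$-contact $(k-1)$-form $\xi$, equivalently $\sigma=\mu+d\xi$ with $\mu$ $k$-contact.

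Second, I would establish the naturality of the contact projections with respect to the jet pull-backs, namely $p_k\circ(\pi^{r+1,r})^*=(\pi^{r+2,r+1})^*\circ p_k$. This is immediate from the explicit coefficient formulas accompanying \eqref{rozklad3} together with the identities for the pull-backs of $dt$ and $\omega^\sigma_l$; the only thing to verify is the bookkeeping of the summation ranges when the order is raised.

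Third -- and this is where the content sits -- I claim it suffices to show that a local primitive can be chosen at order $r$. Suppose $(\pi^{r+1,r})^*\rho$ is strongly contact, so on a chart there is a $(k-1)$-contact $(k-1)$-form $\zeta$ on $V^{r+1}$ with $p_k((\pi^{r+1,r})^*\rho-d\zeta)=0$. If $\zeta$ could be taken of the form $(\pi^{r+1,r})^*\eta$ with $\eta$ a $(k-1)$-contact $(k-1)$-form on $V^r$, then $d\zeta=(\pi^{r+1,r})^*d\eta$ and the relation reads $p_k\bigl((\pi^{r+1,r})^*(\rho-d\eta)\bigr)=0$; by naturality this equals $(\pi^{r+2,r+1})^*p_k(\rho-d\eta)$, and injectivity of the pull-back forces $p_k(\rho-d\eta)=0$, i.e. $\rho$ is strongly contact at order $r$. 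Thus everything reduces to a descent statement: the local primitive $\zeta$ may be lowered from order $r+1$ to order $r$.

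The main obstacle is precisely this descent, and I would treat it by a top-order analysis in the contact basis. Writing $\zeta$ as a sum of wedges $\omega^{\sigma_1}_{l_1}\wedge\dots\wedge\omega^{\sigma_{k-1}}_{l_{k-1}}$ with $0\le l_i\le r$ and coefficients on $V^{r+1}$, I would compute $p_k(d\zeta)$ using $d\omega^\sigma_r=-\omega^\sigma_{r+1}\wedge dt$ and compare it with $p_k((\pi^{r+1,r})^*\rho)=(\pi^{r+2,r+1})^*p_k\rho$, which by construction contains no factor $\omega^\sigma_{r+1}$ and whose coefficients are independent of $y^\sigma_{r+2}$. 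Matching the terms that do involve $\omega^\sigma_{r+1}$ or $y^\sigma_{r+2}$ forces algebraic relations on the top coefficients of $\zeta$, and a rearrangement (integration by parts) then shows that the order-$(r+1)$ part of $\zeta$ contributes to $p_k(d\zeta)$ only through pieces absorbable either into a $k$-contact form or into $d$ of a strictly lower-order $(k-1)$-contact form; iterating lowers $\zeta$ to the required $\eta$. I expect the indexing of these top-order cancellations, rather than any conceptual difficulty, to be the delicate part, and Theorem~\ref{th1} can be invoked to ensure that the intermediate closed contact pieces generated by the integration by parts are themselves exact in the contact subsequence, keeping the induction on order well founded.
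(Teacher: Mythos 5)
The paper itself offers no proof of Theorem \ref{th2}: it is recalled as a known result of the variational sequence theory (Krupka \cite{SeqMech}, \cite{Book}), so your attempt has to stand on its own. Your scaffolding is correct and is the standard one: $(\pi^{r+1,r})^*$ is injective and maps $\Theta^r_kW$ into $\Theta^{r+1}_kW$; the naturality $p_k\circ(\pi^{r+1,r})^* = (\pi^{r+2,r+1})^*\circ p_k$ holds by uniqueness of the decomposition (\ref{rozklad3}); and the theorem does reduce to the implication that strong contactness of $(\pi^{r+1,r})^*\rho$ forces strong contactness of $\rho$, via a descent of the local primitive. The gaps are at exactly the point where the content sits. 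First, your bookkeeping conflates two inequivalent conditions: ``$p_k(\sigma - d\xi)=0$'' says $\sigma - d\xi$ is $(k-1)$-contact, whereas ``$\sigma = \mu + d\xi$ with $\mu$ $k$-contact'' says $p_{k-1}(\sigma - d\xi)=0$, and only the latter describes $\Theta^r_k = \Omega^r_{k,c} + d\Omega^r_{k-1,c}$. (The paper's (\ref{strong}) carries the same misprint: read literally it would make every source form $\varepsilon_\sigma\,\omega^\sigma\wedge dt$ strongly contact with $\eta=0$, collapsing the degree-two quotient and the Euler--Lagrange mapping with it.) Your top-order matching is carried out entirely in the $k$-contact component, comparing $p_k(d\zeta)$ with $(\pi^{r+2,r+1})^*p_k\rho$. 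That comparison does yield, quite cheaply, that the coefficients of $\zeta$ are independent of the variables $y^\nu_{r+1}$ (the monomials $\omega^\nu_{r+1}\wedge W$ are linearly independent of all the others), but it never constrains $p_{k-1}(\rho - d\eta)$, which is what membership of $\rho$ in $\Theta^r_k$ actually requires; in the correct component $p_{k-1}(d\zeta)$ one picks up precisely the terms you discard, namely $-A\,\omega^{\sigma}_{l+1}\wedge dt\wedge\cdots$ with $l+1$ running up to $r+1$, and the formal derivative $dA/dt$, whose coefficients involve $y^\sigma_{r+2}$ --- so the genuine top-order analysis is different from the one you sketch.

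Second, even inside your own scheme, projectability of $\zeta$ does not follow from coefficient independence of $y_{r+1}$: a $(k-1)$-contact $(k-1)$-form on $V^{r+1}$ is generated by the $\omega^\sigma_l$ with $l\le r$, and the factors $\omega^\sigma_r = dy^\sigma_r - y^\sigma_{r+1}\,dt$ are not pull-backs --- they are not even defined on $V^r$, where contact forms stop at $\omega^\sigma_{r-1}$. Stripping the $\omega^\sigma_r$-factors off $\zeta$, by integration by parts in $y^\sigma_{r+1}$ together with a redistribution between the $k$-contact remainder and the exterior derivative of a lower-order contact form, is the actual technical core of the theorem; your sentence that ``a rearrangement (integration by parts) then shows \dots absorbable'' asserts exactly this lemma without proving it, and the closing appeal to Theorem \ref{th1} to keep the induction well founded is too vague to substitute for it (Theorem \ref{th1} is legitimate as a black box, but you do not say which closed contact pieces arise or why they have the degrees and orders to which it applies). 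In sum: the strategy and the preliminary reductions are right and match how the cited literature proceeds, but the decisive descent lemma is missing, and the component in which you match top-order terms is the wrong one once the definition of strong contactness is stated correctly.
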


Recall that a sheaf $S$ over a topological space $X$ is called \textit{soft}, if every continuous section of $S$, defined on a closed subset of $X$, can be prolonged to a global continuous section, and a sheaf $S$ over $X$ is called \textit{fine}, if to every locally finite open covering of $X$ there exists a subordinate sheaf partition of unity of the sheaf space germ$\,S$. Every fine sheaf over a paracompact Hausdorff space is soft.

\begin{theorem}\label{fine}
Every of the sheaves $\Omega^r_k$ is fine, and hence soft.
\end{theorem}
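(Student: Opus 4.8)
The plan is to exploit the fact that $\Omega^r_k$ is a sheaf of modules over the sheaf $\Omega^r_0$ of (direct images of) smooth functions, and to manufacture a sheaf partition of unity by pulling back an ordinary smooth partition of unity on the base $Y$ along the jet projection. Recall that $\Omega^r_k$ is, by construction, the direct image with respect to $\pi^{r,0}:J^rY\to Y$ of the sheaf of smooth $k$-forms on $J^rY$: a section over an open $W\subset Y$ is a $k$-form on $W^r=(\pi^{r,0})^{-1}(W)$, and as noted in Section~\ref{sec:2} such forms constitute an $\Omega^r_0W$-module. Multiplication of a $k$-form on $W^r$ by a fixed smooth function on $J^rY$ is compatible with restrictions, so every smooth function on $J^rY$ induces an endomorphism of the sheaf $\Omega^r_k$.

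First I would fix a locally finite open covering $\{U_\alpha\}$ of $Y$. Since $Y$ is a paracompact Hausdorff smooth manifold, there is a smooth partition of unity $\{\chi_\alpha\}$ on $Y$ subordinate to $\{U_\alpha\}$, with $\operatorname{supp}\chi_\alpha\subset U_\alpha$ and $\sum_\alpha\chi_\alpha=1$. Next I would transport this partition to the jet prolongation by composing with the jet projection, setting $\phi_\alpha=\chi_\alpha\circ\pi^{r,0}$, a smooth function on $J^rY$. Let $h_\alpha:\Omega^r_k\to\Omega^r_k$ denote the sheaf morphism given over each open $W\subset Y$ by the multiplication $\rho\mapsto\phi_\alpha\cdot\rho$ of forms on $W^r$. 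Each $h_\alpha$ is an $\Omega^r_0$-module (hence abelian-group) endomorphism, the family $\{h_\alpha\}$ is locally finite because $\{U_\alpha\}$ and therefore $\{(\pi^{r,0})^{-1}(U_\alpha)\}$ is, and since $\sum_\alpha\phi_\alpha=(\sum_\alpha\chi_\alpha)\circ\pi^{r,0}=1$ we get $\sum_\alpha h_\alpha=\operatorname{id}$.

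It then remains to verify the support condition. Fixing $\alpha$ and a point $y\in Y\setminus\operatorname{supp}\chi_\alpha$, there is a neighbourhood $V$ of $y$ with $\chi_\alpha|_V=0$, whence $\phi_\alpha|_{V^r}=0$ and $h_\alpha$ annihilates every section over $V$; thus the germ of $h_\alpha$ at $y$ is zero and $\operatorname{supp}h_\alpha\subset\operatorname{supp}\chi_\alpha\subset U_\alpha$. Consequently $\{h_\alpha\}$ is a sheaf partition of unity subordinate to $\{U_\alpha\}$, so $\Omega^r_k$ is fine; softness follows at once from the general fact, quoted just before the statement, that every fine sheaf over a paracompact Hausdorff space is soft. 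The only step requiring genuine care is the last one, namely correctly matching the support of the endomorphism $h_\alpha$, computed on $Y$, with the support of the base function $\chi_\alpha$ through the direct-image identification; once this bookkeeping is handled, everything else is the standard $C^\infty$-module argument.
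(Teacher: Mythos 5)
Your proof is correct: the paper states Theorem \ref{fine} without proof, recalling it from the variational sequence literature (Krupka \cite{SeqMech}, \cite{Book}), and your argument --- pulling back a smooth partition of unity $\{\chi_\alpha\}$ on $Y$ through $\pi^{r,0}$ and taking the multiplication endomorphisms $h_\alpha$, with supports tracked on the base through the direct-image identification --- is precisely the standard argument for this fact. In particular you handle the one delicate point correctly, namely verifying $\mathrm{supp}\, h_\alpha\subset U_\alpha$ at the level of germs over $Y$ (not over $J^rY$), so there is nothing to add.
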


Now, as a consequence of Theorem \ref{fine}, we get a theorem on global properties of the variational sequence. Recall that a resolution of an Abelian sheaf $S$ over $X$, $0\to S \to S^0 \to S^1 \to S^2 \to \ldots$, is called \textit{acyclic}, if $H^k(X,S^i)=0$ for all $i\geq 1$. In particular, every soft sheaf $S$ over $X$ satisfies $H^k(X,S)=0$, $k\geq 1$.

\begin{theorem}\label{th4}
The variational sequence of order $r$ over $Y$ {\rm (\ref{quotient})} is an acyclic resolution of the constant sheaf $\ \R_Y$ over $Y$.
\end{theorem}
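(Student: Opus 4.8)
The plan is to prove two things: that the sequence (\ref{quotient}) is a resolution of $\R_Y$, and that this resolution is acyclic. The first is essentially already in hand: by Theorem \ref{th1} the contact subsequence (\ref{subsequence}) is exact, and the de Rham sequence $0\to\R_Y\to\Omega^r_0\to\cdots\to\Omega^r_N\to0$ is exact by the Poincar\'e lemma (the fibres of $\pi^{r,0}$ being contractible affine spaces), so the quotient sequence (\ref{quotient}) is exact as a sequence of sheaves, i.e. a resolution of $\R_Y$. Hence the genuine content of the statement is acyclicity, namely $H^q(Y,\mathcal{S})=0$ for every $q\geq1$ and every term $\mathcal{S}$ of (\ref{quotient}); since soft sheaves have vanishing higher cohomology (as recalled just before the statement), it suffices to show that each term of (\ref{quotient}) is soft.

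The terms that are the untouched sheaves $\Omega^r_k$, that is $\Omega^r_0$ together with $\Omega^r_{M+1},\ldots,\Omega^r_N$, are fine, hence soft, by Theorem \ref{fine}, so these have vanishing higher cohomology at once. For the quotient terms $\Omega^r_k/\Theta^r_k$, $1\leq k\leq M$, the plan is to use the short exact sequence of sheaves $0\to\Theta^r_k\to\Omega^r_k\to\Omega^r_k/\Theta^r_k\to0$. In its long exact cohomology sequence the groups $H^q(Y,\Omega^r_k)$ vanish for $q\geq1$ by Theorem \ref{fine}, whence $H^q(Y,\Omega^r_k/\Theta^r_k)\cong H^{q+1}(Y,\Theta^r_k)$ for $q\geq1$. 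Thus everything reduces to the softness of the strongly contact sheaf $\Theta^r_k$: once $\Theta^r_k$ is known to be soft, the quotient of the soft sheaf $\Omega^r_k$ by the soft subsheaf $\Theta^r_k$ is again soft, and acyclicity of the resolution follows.

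The main obstacle is precisely the softness of $\Theta^r_k$. The difficulty is that, as noted in Section \ref{sec:3}, $\Theta^r_k W$ is only a subgroup and not a submodule of $\Omega^r_k W$: multiplication by a smooth function $\chi$ does not preserve $\Theta^r_k$, since for $\rho=\mu+d\eta$ with $\mu\in\Omega^r_{k,c}$ and $\eta\in\Omega^r_{k-1,c}$ one finds $\chi\rho=\chi\mu+d(\chi\eta)-d\chi\wedge\eta$, and the cross term $d\chi\wedge\eta$ need not be strongly contact. Consequently $\Theta^r_k$, and with it the quotient, is \emph{not} manifestly fine by the module-over-a-soft-ring criterion that yields Theorem \ref{fine}, and this is where the real work lies. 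My plan is to exploit the local direct sum decomposition $\Theta^r_k|_V\cong\Omega^r_{k,c}|_V\oplus\Omega^r_{k-1,c}|_V$ established in Section \ref{sec:2}, in which each summand is a genuine submodule over the smooth functions and hence a fine, soft sheaf over the chart $V$; softness of $\Theta^r_k$ is then to be obtained by an extension-of-sections argument from a closed set, using a partition of unity $\{\chi_i\}$ on $Y$ subordinate to a cover by such charts. The delicate point, and the step I expect to be hardest to make rigorous, is controlling the accumulated cross terms: one splits off the $k$-contact part of each $d\chi_i\wedge\eta_i$ (which lies in $\Omega^r_{k,c}\subseteq\Theta^r_k$) and must show that the remaining lower-contact contributions, collected using $\sum_i d\chi_i=0$, stay in $d\,\Omega^r_{k-1,c}$, so that the glued section is again strongly contact. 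Granting this, every term of (\ref{quotient}) is soft, and the resolution of $\R_Y$ is acyclic as claimed.
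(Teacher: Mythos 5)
Your overall architecture is exactly the intended one: Theorem \ref{th1} together with the Poincar\'e lemma (the fibres of $\pi^{r,0}$ being contractible) gives exactness, Theorem \ref{fine} disposes of the terms $\Omega^r_0$, $\Omega^r_{M+1},\ldots,\Omega^r_N$, and the long-exact-sequence reduction correctly concentrates everything in the softness of $\Theta^r_k$. Indeed, the paper offers no proof at all beyond stating Theorem \ref{th4} ``as a consequence of Theorem \ref{fine}'' and referring to the literature, so this is the right skeleton. But at the crux your proposal has a genuine gap, and one that you yourself flag without closing (``Granting this\dots''): you never prove that the glued section is strongly contact, and the mechanism you propose is unlikely to work as stated. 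Multiplying local representatives $\rho=\mu_i+d\eta_i$ by a partition of unity produces the cross terms $d\chi_i\wedge\eta_i$; splitting $d\chi_i$ in the contact basis, the pieces $(\pa\chi_i/\pa y^\sigma)\,\omega^\sigma\wedge\eta_i$ are $k$-contact and harmless, but the residual pieces $(d\chi_i/dt)\,dt\wedge\eta_i$ are $(k-1)$-contact $k$-forms with no reason to lie in $\Omega^r_{k,c}+d\,\Omega^r_{k-1,c}$: the forms $\eta_i$ belong to different charts and are a priori unrelated on overlaps, so the identity $\sum_i d\chi_i=0$ yields no cancellation among them. Precisely because $\Theta^r_k$ is not a module over the smooth functions --- the obstruction you correctly isolate --- a bump-function gluing is the wrong tool for this sheaf.

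The standard repair, implicit in the sources the paper cites, bypasses the cross terms entirely: softness is a \emph{local} property over a paracompact base (Godement; see also Bredon), i.e. a sheaf whose restriction to each member of an open cover is soft is itself soft, the proof being an exhaustion argument extending sections from closed sets which never multiplies sections by functions, so the module obstruction simply does not arise. Since the chartwise decomposition of Section 2 gives $\Theta^r_k|_V\cong\Omega^r_{k,c}|_V\oplus\Omega^r_{k-1,c}|_V$, a direct sum of fine, hence soft, sheaves, $\Theta^r_k$ is locally soft and therefore soft, and your soft-quotient (or long-exact-sequence) step then finishes the argument exactly as you wrote it. Alternatively, observe that the paper asserts the decomposition $\rho\mapsto(\mu,\eta)$ is a \emph{direct sum}, i.e. the pair is unique (for $k=2$ one checks this directly: if $\mu+d\eta=0$ with $\eta=\sum_l f^l_\sigma\omega^\sigma_l$, comparing coefficients of $\omega^\sigma_l\wedge dt$ forces all $f^l_\sigma=0$ recursively from the top order down); since $k$-contactness and $(k-1)$-contactness are chart-independent notions, uniqueness makes the local pairs $(\mu_i,\eta_i)$ agree on overlaps and glue globally, again with no partition of unity and no cross terms. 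Either route closes your gap; the rest of your proposal is sound and matches the paper's (tacit) proof.
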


Denote by $\Gamma\V^r_Y$ the \textit{cochain complex} of Abelian groups of \textit{global sections},
\begin{equation}\label{global}
\begin{aligned}
&0\to\Gamma(Y,R_Y)\to\Gamma(Y,\Omega^r_0) \to\Gamma(Y,\Omega^r_1  / \Theta^r_1) \to \Gamma(Y,\Omega^r_2 / \Theta^r_2)\\
& \to \cdots \to \Gamma(Y,\Omega^r_M  / \Theta^r_M)  \to \Gamma(Y,\Omega^r_{M+1})  \to \cdots \to \Gamma(Y,\Omega^r_N)  \to 0,
\end{aligned}
\end{equation}
and by $H^k(\Gamma\V^r_Y)$ the $k$-th cohomological group of (\ref{global}), this means $H^k(\Gamma\V^r_Y) =  \ker\ E_k / \im\ E_{k-1}$. The following result is a consequence of Theorem \ref{th4} and the well-known \textit{abstract de Rham theorem} (cf. Warner \cite{Warner}).

\begin{theorem}\label{th5}
The cohomology of the complex {\rm (\ref{global})} and the de Rham cohomology of the underlying manifold $Y$ coincide, i.e. for every $k\geq 0$
\begin{equation}\label{cohomeq}
H^k(\Gamma\V^r_Y) = H^k_{\mathrm{deR}}Y.
\end{equation}
\end{theorem}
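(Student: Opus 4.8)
The plan is to deduce the identity (\ref{cohomeq}) from Theorem \ref{th4} by invoking the abstract de Rham theorem twice, once for the variational resolution $\V^r_Y$ and once for the classical de Rham resolution of $\R_Y$ on $Y$, and then comparing the two outputs through the sheaf cohomology $H^k(Y,\R_Y)$.

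First I would recall the abstract de Rham theorem in the form we need: if $0\to S\to S^0\to S^1\to S^2\to\cdots$ is an acyclic resolution of an Abelian sheaf $S$ over a paracompact Hausdorff space $X$, then the cohomology groups of the cochain complex of global sections $0\to\Gamma(X,S^0)\to\Gamma(X,S^1)\to\cdots$ are isomorphic to the sheaf cohomology groups $H^k(X,S)$, and this isomorphism is independent of the particular acyclic resolution chosen. This is exactly the statement from Warner \cite{Warner} referenced before the theorem.

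Next I would apply this to the two resolutions in turn. By Theorem \ref{th4}, the sequence $0\to\R_Y\to\V^r_Y$ is an acyclic resolution of the constant sheaf $\R_Y$ over $Y$, so the abstract de Rham theorem immediately yields $H^k(\Gamma\V^r_Y)\cong H^k(Y,\R_Y)$ for every $k\geq 0$. In parallel, let $\mathcal{A}^k$ denote the sheaf of germs of smooth $k$-forms on $Y$; the classical de Rham complex $0\to\R_Y\to\mathcal{A}^0\to\mathcal{A}^1\to\cdots$ is a resolution of $\R_Y$ by the Poincar\'e lemma, and it is acyclic because the sheaves $\mathcal{A}^k$ are fine, hence soft. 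The abstract de Rham theorem applied to this second resolution gives $H^k_{\mathrm{deR}}Y\cong H^k(Y,\R_Y)$.

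Finally I would combine the two isomorphisms. Since $\V^r_Y$ and the classical de Rham complex are acyclic resolutions of one and the same sheaf $\R_Y$ over the same base $Y$, their global-section cohomologies share the common value $H^k(Y,\R_Y)$, whence $H^k(\Gamma\V^r_Y)\cong H^k_{\mathrm{deR}}Y$, which is (\ref{cohomeq}). The argument is essentially formal once Theorem \ref{th4} is in hand; the only point deserving care is that the variational sheaves naturally live over the jet prolongation $J^rY$ and enter the sequence as their direct images along $\pi^{r,0}$ to sheaves over $Y$, so that both resolutions are genuinely resolutions of $\R_Y$ over the single base $Y$. With this identification in place, the substantial work has already been carried out in Theorems \ref{th1}--\ref{th4}, and what remains is the twofold invocation of the abstract de Rham theorem.
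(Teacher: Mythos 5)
Your proposal is correct and follows exactly the route the paper intends: Theorem \ref{th5} is stated there as a direct consequence of Theorem \ref{th4} together with the abstract de Rham theorem of Warner \cite{Warner}, which is precisely your twofold application (to the acyclic resolution $0\to\R_Y\to\V^r_Y$ and to the classical fine resolution by sheaves of smooth forms on $Y$), both computing $H^k(Y,\R_Y)$. Your added remark that the variational sheaves are direct images under $\pi^{r,0}$, so that both resolutions live over the same base $Y$, is a correct and welcome clarification of a point the paper leaves implicit.
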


\subsection{Variational sequences induced by embedded submanifolds}
\label{subsec3}
Let $Q$ be an embedded submanifold of $\R^{m}$, and $\iota:\R\times Q\to\R\times\R^{m}$ be the canonical embedding of fibered manifolds over $\R$, locally expressed by equations $t\circ\iota = t$, $x^\sigma\circ\iota = f^\sigma(q^i)$ (Sec. 2, (\ref{iota})), with respect to the global canonical coordinates $(t,x^\sigma)$ on $\R\times\R^m$, and a fibered chart $(U,\varphi)$, $\varphi=(t,q^i)$, on $\R\times Q$. The $r$-th jet prolongation $J^r\iota:J^r(\R\times Q)\to J^r(\R\times\R^m)$ (Sec. \ref{sec:2}, (\ref{prolong})) has a chart expression described by Lemma \ref{lem:prolong}.

Let us study variational sequences over both fibered manifolds $\R\times Q$ and $\R\times\R^{m}$. To this purpose we introduce a pull-back of presheaves of forms with respect to an embedding. Denote by $\Omega^r_k$, resp. ${}^{Q}\!\Omega^r_k$, the direct image of the sheaf of smooth $k$-forms over $J^r(\R\times\R^{m})$, resp. $J^r(\R\times Q)$, with respect to the jet projection $\pi^{r,0}:J^r(\R\times\R^{m})\to \R\times\R^{m}$, resp. $\pi^{r,0}_Q:J^r(\R\times Q)\to \R\times Q$. 
It should be pointed out that $\Omega^r_k$ is a simplified notation for the sheaf ${}^{\R^{m}\!\!}\Omega^r_k$.
Analogously to Sec. \ref{sec:seq}, we denote the sheaves of strongly contact forms by $\Theta^r_k = \Omega^r_{k,c} + d \Omega^r_{k-1,c}$, and ${}^{Q}\!\Theta^r_k = {}^{Q}\!\Omega^r_{k,c} + d {}^{Q}\!\Omega^r_{k-1,c}$. An element of $\Omega^r_k / \Theta^r_k$, represented by a $k$-form $\rho\in \Omega^r_kU$, is denoted by $[\rho]$; an element of ${}^{Q}\!\Omega^r_k / {}^{Q}\!\Theta^r_k$, represented by a $k$-form $\tau\in {}^{Q}\!\Omega^r_kW$, is denoted by $[\tau]_Q$. 

The following constructions allow us to introduce variational sequences on embedded submanifolds. For arbitrary open set $W\subset \R\times Q$ we put   
\begin{equation}\label{pull1}
\begin{aligned}
J^r\iota{}^*(\Omega^r_k)(W) &= \{ J^r\iota{}^*\rho|_{W^r}\, |\, \rho\in \Omega^r_k U\},\\
J^r\iota{}^*(\Theta^r_k)(W) &= \{ J^r\iota{}^*\rho|_{W^r}\, |\, \rho\in \Theta^r_k U \},
\end{aligned}
\end{equation}
and
\begin{equation}\label{pullback}
\begin{aligned}
J^r\iota{}^*(\Omega^r_k / \Theta^r_k)(W) &= \{ [J^r\iota{}^*\rho |_{W^r}]_Q \, |\, \rho\in \Omega^r_k U \},
\end{aligned}
\end{equation}
where $U$ runs over neighbourhoods of $\iota(W)$. 
From definition (\ref{pull1}) it is straightforward that the correspondence assigning to an open set $W\subset \R\times Q$ the Abelian group $J^r\iota{}^*(\Omega^r_k)(W)$, resp. $J^r\iota{}^*(\Theta^r_k)(W)$, is a presheaf, and $J^r\iota{}^*(\Omega^r_k)(W)$, resp. $J^r\iota{}^*(\Theta^r_k)(W)$, is an Abelian subgroup of ${}^{Q}\!\Omega^r_k W $, resp. ${}^{Q}\!\Theta^r_k W $, for every open set $W$. The next lemma describes the presheaf  {\rm (\ref{pullback})}.

\begin{theorem}\label{lemRov} The presheaf $J^r\iota{}^*(\Omega^r_k / \Theta^r_k)$ is a sheaf, and it coincides with the quotient sheaf ${}^{Q}\!\Omega^r_k / {}^{Q}\!\Theta^r_k$,
\begin{equation*}
J^r\iota{}^*(\Omega^r_k / \Theta^r_k)
= {}^{Q}\!\Omega^r_k / {}^{Q}\!\Theta^r_k.
\end{equation*}
\end{theorem}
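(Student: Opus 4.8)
The plan is to reduce the claim to two surjectivity statements for the pull-back $J^r\iota{}^*$ and then to a gluing question for the resulting quotient presheaf. First I would prove, for every open $W\subset\R\times Q$, the equalities of Abelian groups
\[
J^r\iota{}^*(\Omega^r_k)(W) = {}^{Q}\!\Omega^r_k W, \qquad
J^r\iota{}^*(\Theta^r_k)(W) = {}^{Q}\!\Theta^r_k W.
\]
The first is Lemma \ref{extension}: the inclusion $\subseteq$ is trivial, and $\supseteq$ is precisely the statement that every $k$-form on $W^r$ is the pull-back of a $k$-form defined on some neighbourhood of $\iota(W)$. For the second, $\subseteq$ is Lemma \ref{lem:contact2}(b) (the pull-back of a strongly contact form is strongly contact), while $\supseteq$ is the strongly contact version of Lemma \ref{extension} recorded in the Remark following it.

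Granting these, the pull-back presheaf coincides, as a presheaf, with the naive quotient $W\mapsto {}^{Q}\!\Omega^r_k W/{}^{Q}\!\Theta^r_k W$. The assignment $[\rho]\mapsto[J^r\iota{}^*\rho]_Q$ is well defined by the inclusion $J^r\iota{}^*(\Theta^r_k)(W)\subseteq{}^{Q}\!\Theta^r_k W$, and, by the first equality, $J^r\iota{}^*\rho$ runs over all of ${}^{Q}\!\Omega^r_k W$ as $\rho$ runs over forms on neighbourhoods of $\iota(W)$; hence the classes $[J^r\iota{}^*\rho]_Q$ exhaust the whole quotient group. Thus $J^r\iota{}^*(\Omega^r_k/\Theta^r_k)(W) = {}^{Q}\!\Omega^r_k W/{}^{Q}\!\Theta^r_k W$ for every $W$, and the entire statement reduces to showing that this quotient presheaf is a sheaf — for then it equals its sheafification, the quotient sheaf ${}^{Q}\!\Omega^r_k/{}^{Q}\!\Theta^r_k$.

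Separation is automatic: if a representative restricts into ${}^{Q}\!\Theta^r_k$ on each member of a cover of $W$, then it lies in ${}^{Q}\!\Theta^r_k W$, because ${}^{Q}\!\Theta^r_k$ is a sheaf. Gluing is the heart of the matter and the step I expect to be the main obstacle. The naive attempt — glue compatible local representatives $\eta_i$ by a smooth partition of unity $\{\chi_i\}$ via $\eta=\sum_i\chi_i\eta_i$ — fails because ${}^{Q}\!\Theta^r_k$ is only a subgroup, not a submodule, of ${}^{Q}\!\Omega^r_k$: for a $(k-1)$-contact $(k-1)$-form $\eta$ the identity $\chi_i\,d\eta = d(\chi_i\eta)-d\chi_i\wedge\eta$ produces a correction term whose part $(d\chi_i/dt)\,dt\wedge\eta$ is generically not strongly contact, and these corrections do not cancel in the sum.

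The rigorous input is cohomological. I would use the short exact sequence of sheaves over $\R\times Q$,
\[
0\to {}^{Q}\!\Theta^r_k\to {}^{Q}\!\Omega^r_k\to {}^{Q}\!\Omega^r_k/{}^{Q}\!\Theta^r_k\to 0 .
\]
Since ${}^{Q}\!\Omega^r_k$ is fine on every open $W$ (the analogue of Theorem \ref{fine} for the fibered manifold $\R\times Q$), its first cohomology vanishes, and the long exact sequence identifies the gluing obstruction with $H^1(W,{}^{Q}\!\Theta^r_k)$, equivalently with the obstruction to representing every section of the quotient sheaf over $W$ by a global $k$-form. This vanishing — the softness of ${}^{Q}\!\Theta^r_k$, i.e. representability of variational classes by global forms — is what the variational sequence theory of Section \ref{sec:seq}, applied to $\R\times Q$ as a fibered manifold over the one-dimensional base $\R$ (Theorems \ref{th1} and \ref{th4}), provides; for $k=1$ it is elementary, as $\Theta^r_1=\Omega^r_{1,c}$ is a module and hence fine. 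With this in place the quotient presheaf satisfies gluing, so it is a sheaf, and the asserted identification follows.
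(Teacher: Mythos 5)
Your proposal is correct, and its first half is exactly the paper's proof: the authors likewise combine Lemma \ref{extension} (together with the Remark extending it to contact and strongly contact forms) with Lemma \ref{lem:contact2}, (b), to obtain $J^r\iota{}^*(\Omega^r_k)(W)={}^{Q}\!\Omega^r_kW$ and $J^r\iota{}^*(\Theta^r_k)(W)={}^{Q}\!\Theta^r_kW$ for every open $W$, and then observe that the pull-back presheaf is the presheaf $W\to{}^{Q}\!\Omega^r_kW/{}^{Q}\!\Theta^r_kW$ of naive quotient groups. Where you genuinely diverge is at the final step: the paper's proof stops there (``Hence we obtain the required identity''), tacitly identifying this quotient presheaf with the quotient sheaf ${}^{Q}\!\Omega^r_k/{}^{Q}\!\Theta^r_k$ --- that is, taking for granted that every section of the quotient sheaf over an open $W$ is represented by a single globally defined $k$-form on $W^r$. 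You correctly recognize that this is the nontrivial content of the sheaf assertion, diagnose precisely why the obvious partition-of-unity gluing fails (${}^{Q}\!\Theta^r_kW$ is a subgroup but \emph{not} a submodule of ${}^{Q}\!\Omega^r_kW$, as the paper itself notes in Sec.\ \ref{sec:seq}, and the correction term $d\chi_i\wedge\eta$ has a $(k-1)$-contact horizontal part that need not be strongly contact), and reduce the gluing obstruction via the long exact cohomology sequence to the vanishing of $H^1(W,{}^{Q}\!\Theta^r_k)$. The paper's proof is shorter because it outsources representability to the background variational sequence theory; yours makes the hidden hypothesis visible and locates it cohomologically.

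One attribution in your last step should be sharpened, since as written it is circular: the vanishing of $H^1(W,{}^{Q}\!\Theta^r_k)$ does \emph{not} follow formally from Theorems \ref{th1} and \ref{th4} as stated. From the short exact sequence $0\to{}^{Q}\!\Theta^r_k\to{}^{Q}\!\Omega^r_k\to{}^{Q}\!\Omega^r_k/{}^{Q}\!\Theta^r_k\to 0$ with ${}^{Q}\!\Omega^r_k$ fine, acyclicity of the quotient sheaf controls $H^2(W,{}^{Q}\!\Theta^r_k)$ and higher groups, whereas the long exact sequence gives $H^1(W,{}^{Q}\!\Theta^r_k)\cong\Gamma(W,{}^{Q}\!\Omega^r_k/{}^{Q}\!\Theta^r_k)/\mathrm{im}\,\Gamma(W,{}^{Q}\!\Omega^r_k)$ --- exactly the representability you are trying to establish; moreover, Theorem \ref{th4} is itself usually \emph{derived} from this fact. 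The correct input is the one you name in passing: softness of the sheaves $\Theta^r_k$, which is an independent lemma of the variational sequence theory (Krupka \cite{SeqMech}), proved not by multiplying a strongly contact $\rho$ by a partition of unity, but by applying the partition of unity componentwise to the pair $(\mu,\eta)$ in the local direct-sum decomposition $\rho=\mu+d\eta$ recalled in Sec.\ \ref{sec:2} --- precisely the device that circumvents the non-module difficulty you identified. With that citation in place of Theorems \ref{th1} and \ref{th4}, your argument is complete, and your elementary remark for $k=1$ (where $\Theta^r_1=\Omega^r_{1,c}$ is a fine module) is correct as it stands.
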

\begin{proof}
By Lemma \ref{extension} we have $J^r\iota{}^*(\Omega^r_k)(W) = {}^{Q}\!\Omega^r_k W$, $J^r\iota{}^*(\Theta^r_k)(W) = {}^{Q}\!\Theta^r_k W$, for every open set $W\subset \R\times Q$, and thus $J^r\iota{}^*(\Omega^r_k)$, resp. $J^r\iota{}^*(\Theta^r_k)$, coincides with the sheaf ${}^{Q}\!\Omega^r_k$, resp. ${}^{Q}\!\Theta^r_k$. From (\ref{pullback}), however, it follows that $J^r\iota{}^*(\Omega^r_k / \Theta^r_k)(W)$ is the quotient group $J^r\iota{}^*(\Omega^r_k)(W) /  J^r\iota{}^*(\Theta^r_k)(W)$. Hence we obtain the required identity.
\end{proof}

Consider a sheaf morphism
$
{J^r\iota}^* : \Omega^r_k / \Theta^r_k \to 
{}^{Q}\!\Omega^r_k / {}^{Q}\!\Theta^r_k,
$
defined as follows. For any element $[\rho]$ of $\Omega^r_k / \Theta^r_k$, where $\rho\in \Omega^r_k U$, $U$ is an open subset of $\R^{m+1}$, we put
\begin{equation}\label{class}
{J^r\iota}^*([\rho]) = [{J^r\iota}^*\rho]_Q.
\end{equation}
It is easy to see that definition (\ref{class}) does \textit{not} depend on the choice of a representative $\rho$ of class the $[\rho]$. Indeed, if $\rho'$ is another representative of $[\rho]$, then $\rho-\rho'$ belongs to $\Theta^r_k$. However, by Lemma \ref{lem:contact2}, (b), it follows that ${J^r\iota}^*(\rho-\rho')\in {}^{Q}\!\Theta^r_k$ hence we get
\begin{equation*}
[{J^r\iota}^*\rho]_Q = [{J^r\iota}^*(\rho-\rho')]_Q + [{J^r\iota}^*\eta]_Q = [{J^r\iota}^*\eta]_Q .
\end{equation*}

The variational sequence morphisms $E_k : \Omega^r_k / \Theta^r_k \to \Omega^r_{k+1}/ \Theta^r_{k+1}$, and ${}^{Q}\!E_k : {}^{Q}\!\Omega^r_k / {}^{Q}\!\Theta^r_k \to {}^{Q}\!\Omega^r_{k+1}/ {}^{Q}\!\Theta^r_{k+1}$, are defined by $E_k([\rho]) = [d\rho]$, and ${}^{Q}\! E_k([\tau]_Q) = [d\tau]_Q$, respectively (Sec. \ref{sec:seq}, (\ref{quodef})). We have the diagram
\begin{equation}\label{diagram2}
\begin{diagram}
\cdots & \rTo^{{}^{Q}\!E_{k-1}} & {}^{Q}\!\Omega^r_k/ {}^{Q}\!\Theta^r_k&\rTo^{{}^{Q}\!E_{k\ }} & {}^{Q}\!\Omega^r_{k+1}/ {}^{Q}\!\Theta^r_{k+1}&\rTo^{{}^{Q}\!E_{k+1\ }}&\cdots\\
& &\uTo_{{J^r\iota}^*}&&\uTo_{{J^r\iota}^*}\\
\cdots &\rTo^{E_{k-1\ }} & \Omega^r_k / \Theta^r_k &\rTo^{E_{k\ \ }} & \Omega^r_{k+1} / \Theta^r_{k+1} &\rTo^{E_{k+1\ }}&\cdots
\end{diagram}
\end{equation}
where by Lemma \ref{lemRov} the morphisms ${J^r\iota}^*$ (\ref{class}) are surjective. The following theorem shows that squares in (\ref{diagram2}) commute.

\begin{theorem}\label{th6}
For every positive integer $k$, the morphisms $E_{k}$ in the variational sequence $0\to\R_{\R^{m}}\to\V^r_{\R^{m}}$ over $\R\times\R^{m}$, and ${}^{Q}\!E_{k}$ in the variational sequence $0\to\R_Q\to\V^r_Q$ over $\R\times Q$ satisfy the identity
\begin{equation}\label{comm}
{}^{Q}\!E_{k}\circ {J^r\iota}^* = {J^r\iota}^* \circ E_{k}.
\end{equation}
\end{theorem}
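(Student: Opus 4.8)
The plan is to reduce the identity (\ref{comm}), which a priori concerns morphisms of quotient sheaves, to the elementary \emph{naturality} of the exterior derivative under pull-back of differential forms. First I recall that all three morphisms entering (\ref{comm}) act on equivalence classes but are defined through representatives: $E_k([\rho]) = [d\rho]$, ${}^{Q}\!E_k([\tau]_Q) = [d\tau]_Q$, and ${J^r\iota}^*([\rho]) = [{J^r\iota}^*\rho]_Q$. Their well-definedness has already been secured --- for $E_k$ and ${}^{Q}\!E_k$ by the fact that the exterior derivative of a strongly contact form is again strongly contact, and for ${J^r\iota}^*$ by the computation following (\ref{class}), which rests on Lemma \ref{lem:contact2}, (b). Consequently it suffices to verify (\ref{comm}) on a single representative of an arbitrary class.

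Thus I would fix an open set $W\subset\R\times Q$, a neighbourhood $U$ of $\iota(W)$ in $\R^{m+1}$, and a representative $\rho\in\Omega^r_kU$ of a class $[\rho]$ in $\Omega^r_k/\Theta^r_k$. Unwinding the left-hand side of (\ref{comm}) gives
\begin{equation*}
{}^{Q}\!E_k\big({J^r\iota}^*([\rho])\big) = {}^{Q}\!E_k\big([{J^r\iota}^*\rho]_Q\big) = [d({J^r\iota}^*\rho)]_Q,
\end{equation*}
while the right-hand side yields
\begin{equation*}
{J^r\iota}^*\big(E_k([\rho])\big) = {J^r\iota}^*([d\rho]) = [{J^r\iota}^*(d\rho)]_Q.
\end{equation*}
Hence (\ref{comm}) is equivalent to the equality of classes $[d({J^r\iota}^*\rho)]_Q = [{J^r\iota}^*(d\rho)]_Q$ in ${}^{Q}\!\Omega^r_{k+1}/{}^{Q}\!\Theta^r_{k+1}$.

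The key step is then the naturality identity $d\circ{J^r\iota}^* = {J^r\iota}^*\circ d$, valid for the pull-back along the smooth jet prolongation map $J^r\iota:J^r(\R\times Q)\to J^r(\R\times\R^m)$ (whose explicit form is given in Lemma \ref{lem:prolong}). Applied to $\rho$ it gives $d({J^r\iota}^*\rho) = {J^r\iota}^*(d\rho)$ as an actual $(k+1)$-form on $J^r(\R\times Q)$, so in particular the two forms represent the same class modulo ${}^{Q}\!\Theta^r_{k+1}$, which is exactly the required identity. For the terms of the sequence in degrees exceeding $M=mr+1$, where the quotients by $\Theta$ degenerate and $E_k$, ${}^{Q}\!E_k$ reduce to $d$ itself, the very same argument applies verbatim.

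No genuine obstacle arises once the quotient structure is stripped away by passing to representatives: the statement is then the standard commutation of $d$ with pull-back. The only point requiring care --- and it is already handled before the theorem --- is that passing to representatives is legitimate, i.e. that each of ${J^r\iota}^*$, $E_k$ and ${}^{Q}\!E_k$ descends to the quotient sheaves. This is where Lemma \ref{lem:contact2}, (b), does the real work, guaranteeing that ${J^r\iota}^*$ maps $\Theta^r_k$ into ${}^{Q}\!\Theta^r_k$ and hence induces a well-defined map on classes.
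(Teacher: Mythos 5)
Your proof is correct and follows essentially the same route as the paper: pass to a representative $\rho$, unwind both sides via the definitions $E_k([\rho])=[d\rho]$, ${}^{Q}\!E_k([\tau]_Q)=[d\tau]_Q$, ${J^r\iota}^*([\rho])=[{J^r\iota}^*\rho]_Q$, and conclude by the commutation of $d$ with pull-back, with well-definedness resting on Lemma \ref{lem:contact2}, (b). Your added remarks on legitimacy of choosing representatives and on the degrees beyond $M=mr+1$ are sound but do not change the argument, which is exactly the paper's.
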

\begin{proof}
Identity (\ref{comm}) follows immediately from definitions. Indeed, applying formula (\ref{class}) for ${J^r\iota}^*$ and the definition of morphisms $E_k$, we get for any element $\rho\in \Omega^r_k U$,
\begin{equation*}
\begin{aligned}
({}^{Q}\! E_{k}\circ {J^r\iota}^*)([\rho]) &= {}^{Q}\!E_{k}([{J^r\iota}^*\rho]_Q)=[d {J^r\iota}^*\rho]_Q = [{J^r\iota}^* d\rho]_Q \\
&= {J^r\iota}^*([d\rho]) = ({J^r\iota}^*\circ E_{k})([\rho]),
\end{aligned}
\end{equation*} 
as required.
\end{proof}

\begin{remark}
Theorems \ref{lemRov} and \ref{th6} say that the variational sequence $0\to\R_Q\to\V^r_Q$ over $\R\times Q$ is completely described by the embedded submanifold $Q$ of $\R^{m}$ and the variational sequence $0\to\R_{\R^{m}}\to\V^r_{\R^{m}}$ over $\R\times\R^{m}$.
\end{remark}

Denote by $\Gamma \V^r_{\R^{m}}$ and $\Gamma \V^r_{Q}$, the cochain complexes of global sections associated with the variational sequences $0\to\R_{\R^{m}}\to\V^r_{\R^{m}}$ and $0\to\R_Q\to\V^r_Q$, respectively (cf. (\ref{global})). Theorem \ref{th6} allows us to introduce a cochain mapping, consisting of a family of homomorphisms ${J^r\iota}^* : \Gamma (\R\times\R^{m}, \Omega^r_k / \Theta^r_k) \to \Gamma (\R\times Q, {}^{Q}\!\Omega^r_k / {}^{Q}\!\Theta^r_k)$, defined by means of (\ref{class}). The $k$-th cohomological group $H^k(\Gamma \V^r_{\R^{m}})$, resp. $H^k(\Gamma \V^r_{Q})$, of the complex $\Gamma \V^r_{\R^{m}}$, resp. $\Gamma \V^r_{Q}$, is defined in the standard sense (cf. Sec. \ref{sec:seq}). We get the induced homomorphism of cohomological groups $H^k_{\R^{m}, Q}: H^k(\Gamma \V^r_{\R^{m}}) \to H^k(\Gamma \V^r_{Q})$, defined by $H^k_{\R^{m},Q} ([\rho]) = [{J^r\iota}^*\rho]$ (cf. Warner \cite{Warner}). The abstract de Rham theorem for cochain complexes of global sections of variational sequences, see Thereom \ref{th5}, (\ref{cohomeq}), implies that $H^k(\Gamma \V^r_{\R^{m}}) = H^k_{\mathrm{deR}}(\R\times\R^{m}) = 0$ and $H^k(\Gamma \V^r_{Q}) = H^k_{\mathrm{deR}}(\R\times Q) = H^k_{\mathrm{deR}}Q$, hence the homomorphism $H^k_{\R^{m}, Q}: 0 \to H^k_{\mathrm{deR}}Q$ is a trivial injection.

\subsection{Applications: The calculus of variations}
\label{calvar}
In this section we summarize basic higher-order variational formulas characterizing the Euler--Lagrange and Helmholtz mappings of the calculus of variations. One can easily specify these formulas for first- and second-order problems important for applications.

Let $(V,\psi)$, $\psi=(t,y^\sigma)$, be a fibered chart on $W\subset Y$, and the associated chart on $W^r\subset J^rY$ is denoted by $(V^r,\psi^r)$, $\psi^r=(t,y^\sigma,y^\sigma_1,y^\sigma_2,\ldots, y^\sigma_r)$. If $\rho$ is a $1$-form on $W^r\subset J^rY$, then its class $[\rho]$ is an element of $\Omega^r_1 W /\Theta^r_1 W$, represented by a differential $1$-form $\lambda$ on $W^{r+1}\subset J^{r+1}Y$, with chart expression
\begin{equation}\label{lagrangian}
\lambda = \mathscr{L} dt,
\end{equation}
where $\mathscr{L}: V^{r+1}\to\R$ is a real-valued function. $\lambda$ is the \textit{Lagrangian} associated with $\rho$, and the function $\mathscr{L}$ is a (local) \textit{Lagrange function} associated with $\lambda$. 

If $\rho$ is a $2$-form on $W^r$, then its class $[\rho]$ is an element of $\Omega^r_2 W /\Theta^r_2 W$, represented by a differential $2$-form $\varepsilon$ on $W^{s}\subset J^{s}Y$ for some $s\leq 2r+1$, which has a chart expression
\begin{equation}\label{sourceform}
\varepsilon = \varepsilon_\sigma \omega^\sigma\wedge dt,
\end{equation}
where $\varepsilon_\sigma: V^s \to\R$, $\varepsilon_\sigma = \varepsilon_\sigma (t, y^\sigma, y^\sigma_1, y^\sigma_2, \ldots, y^\sigma_s)$. A differential $2$-form locally expressed by (\ref{sourceform}) is a \textit{source form} (of order $s$), associated with $\rho$. Every Lagrangian $\lambda$ defines a source form $E_\lambda$, locally expressed by 
\begin{equation}\label{ELform}
E_\lambda = \E_\sigma(\mathscr{L}) \omega^\sigma \wedge dt,
\end{equation}
where
\begin{equation}\label{EulerLagrange}
\E_\sigma (\mathscr{L}) = \frac{\pa \mathscr{L}}{\pa y^\sigma} + \sum_{l=1}^r (-1)^l \frac{d^l}{dt^l} \frac{\pa\mathscr{L}}{\pa y^\sigma_l}.
\end{equation}
The coefficients of $E_\lambda$ are the \textit{Euler--Lagrange expressions}, associated with $\mathscr{L}$. $E_\lambda$ is called the \textit{Euler--Lagrange form} (or \textit{Euler--Lagrange class}), associated with $\lambda$. 

The class $[\rho]$ of a $3$-form $\rho$ on $W^r$ belongs to the quotient  $\Omega^r_3 W /\Theta^r_3 W$, and is represented by a differential $3$-form $\vartheta$ on $W^{s}\subset J^{s}Y$ for some $s\leq 4r+3$, which is locally expressed by
\begin{equation}\label{source2}
\vartheta = \frac{1}{2}\sum_{l=0}^r\vartheta^l_{\nu\sigma} \omega^\nu_l \wedge \omega^\sigma\wedge dt,
\end{equation}
where the components $\vartheta^l_{\nu\sigma}$ are functions of $t, y^\kappa, y^\kappa_1, y^\kappa_2, \ldots, y^\kappa_s$. Every form $\varepsilon$ (\ref{sourceform}) defines a class $H_\varepsilon$ in $\Omega^r_3 W / \Theta^r_3 W$ by 
\begin{equation}\label{Hform}
H_\varepsilon = \H^l_{\nu\sigma}(\varepsilon_\kappa) \omega^\nu_l \wedge \omega^\sigma\wedge dt,
\end{equation}
where
\begin{equation}\label{Helmholtz}
\H^l_{\nu\sigma}(\varepsilon_\kappa) = \frac{\pa \varepsilon_\sigma}{\pa y^\nu_l} - (-1)^l \frac{\pa \varepsilon_\nu}{\pa y^\sigma_l} - \sum_{s=l+1}^r (-1)^s {s \choose l} \frac{d^{s-l}}{dt^{s-l}} \frac{\pa \varepsilon_\nu}{\pa y^\sigma_s}.
\end{equation}
Then $H_\varepsilon$ is the \textit{Helmholtz form} (\textit{Helmholtz class}), associated with source form $\varepsilon$, and the components $\H^l_{\nu\sigma}(\varepsilon_\kappa)$ are the \textit{Helmholtz expressions}.

The $\R$-linear mapping $\lambda \to E_\lambda$, assigning to a Lagrangian its Euler--Lagrange form, is the \textit{Euler--Lagrange mapping}, and $\varepsilon \to H_\varepsilon$, assigning to a source form its Helmholtz form, we call the \textit{Helmholtz mapping}. A Lagrangian $\lambda$ which belongs to the \textit{kernel} of the Euler--Lagrange mapping, that is $E_\lambda = 0$, is said to be (\textit{locally}) \textit{trivial}, or \textit{null}. A source form $\varepsilon$ is said to be (\textit{locally}) \textit{variational}, if it belongs to the \textit{image} of the Euler--Lagrange mapping; this means that there exists a Lagrangian $\lambda$ such that $\varepsilon = E_\lambda$, i.e. in components, $\varepsilon_\sigma$ coincide with the Euler--Lagrange expressions associated with a Lagrange function $\mathscr{L}$. In addition, if the Lagrangian $\lambda$ is defined on $J^{r+1}Y$, $\varepsilon$ is \textit{globally} \textit{variational}. The problem to describe the kernel and the image of the Euler--Lagrange mapping belongs to the \textit{inverse problem} of the calculus of variations.

The following theorem explains the relationship between the variational sequence and the classical calculus of variations, and enables us to study \textit{global} properties of the Euler--Lagrange and Helmholtz mappings.

\begin{theorem}\label{th3}
Let $W$ be an open subset in fibered manifold $Y$, and $(V,\psi)$, $\psi=(t,q^\sigma)$, be a fibered chart on $Y$ such that $V\subset W$.

{\rm (a)} Let $\lambda$ be a Lagrangian associated with a $1$-form $\rho\in\Omega^r_1 W$, given by {\rm (\ref{lagrangian})}. Then $E(\lambda)$ has an expression
\begin{equation*}
E(\lambda) = E_\sigma([d\rho]) \omega^\sigma\wedge dt,
\end{equation*}
where its coefficients coincide with the Euler--Lagrange expressions {\em (\ref{EulerLagrange})},
\begin{equation*}
E_\sigma([d\rho]) = \E_\sigma (\mathscr{L}).
\end{equation*}

{\rm (b)} Let $\varepsilon$ be a source form associated with a $2$-form $\rho\in\Omega^r_2 W$, given by {\rm (\ref{source})}. Then $E(\varepsilon)$ has an expression
\begin{equation*}
E(\varepsilon) = \frac{1}{2}\sum_{l=0}^r E^l_{\nu\sigma}([d\rho]) \omega^\nu_l \wedge\omega^\sigma\wedge dt,
\end{equation*}
where its coefficients coincide with the Helmholtz expressions {\em (\ref{Helmholtz})},
\begin{equation*}
E^l_{\nu\sigma}([d\rho]) = \H^l_{\nu\sigma}(\varepsilon_\tau).
\end{equation*}
\end{theorem}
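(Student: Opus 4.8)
The plan is to reduce both parts to a purely local integration-by-parts computation in the fibered chart $(V,\psi)$, exploiting the fact that the morphisms $E_k$ act on classes by $E_k([\rho]) = [d\rho]$ and that, by Theorem \ref{th2}, these classes may be computed using canonical representatives at a fixed, sufficiently high jet order.

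For part (a), I would first identify $[\rho]$ with the class of its Lagrangian. From the decomposition $(\pi^{r+1,r})^*\rho = h\rho + p\rho$, and since $\Omega^r_{0,c}=\{0\}$ forces $\Theta^r_1 = \Omega^r_{1,c}$, the contact part $p\rho$ is strongly contact, so $[\rho] = [h\rho] = [\lambda]$ with $\lambda = \mathscr{L}\,dt$. Hence $E_1([\rho]) = [d\lambda]$. Using $dy^\sigma_l \wedge dt = \omega^\sigma_l \wedge dt$, a direct computation gives $d\lambda = \sum_l (\pa\mathscr{L}/\pa y^\sigma_l)\,\omega^\sigma_l \wedge dt$. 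The substantive step is to bring this into source-form shape by removing the factors $\omega^\sigma_l$ with $l\geq 1$ modulo $\Theta$. The identity $\omega^\sigma_l \wedge dt = -d\omega^\sigma_{l-1}$ together with the Leibniz rule shows that $(\pa\mathscr{L}/\pa y^\sigma_l)\,\omega^\sigma_l \wedge dt$ equals $d(\pa\mathscr{L}/\pa y^\sigma_l)\wedge\omega^\sigma_{l-1}$ modulo $d$ of a contact $1$-form, and the latter lies in $d\Omega^r_{1,c}\subset\Theta^r_2$ and may be discarded. Iterating this integration by parts lowers the contact index one step at a time and reproduces the alternating sum of formal derivatives $\sum_l (-1)^l (d^l/dt^l)(\pa\mathscr{L}/\pa y^\sigma_l)$; collecting the coefficient of $\omega^\sigma\wedge dt$ yields $E_\sigma([d\rho]) = \E_\sigma(\mathscr{L})$, which is (\ref{EulerLagrange}).

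For part (b) the strategy is the same, raised by one degree. Taking the source-form representative $\varepsilon = \varepsilon_\sigma\,\omega^\sigma\wedge dt$, so that $E_2([\rho]) = [d\varepsilon]$, and using $d\omega^\sigma\wedge dt = 0$ together with the vanishing of the $dt$-components after wedging, one finds $d\varepsilon = \sum_l (\pa\varepsilon_\sigma/\pa y^\nu_l)\,\omega^\nu_l\wedge\omega^\sigma\wedge dt$. The canonical representative of a class in $\Omega^r_3/\Theta^r_3$ has the shape (\ref{source2}), so I would integrate by parts on the first contact factor via $\omega^\nu_l\wedge\omega^\sigma\wedge dt = d(\omega^\nu_{l-1}\wedge\omega^\sigma) - \omega^\nu_{l-1}\wedge\omega^\sigma_1\wedge dt$, discarding the exact term (which is $d$ of a $2$-contact form, hence in $\Theta^r_3$) and iterating. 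This transfer of derivatives between the two contact factors, combined with the antisymmetry of the $3$-form in the pair $(\nu,\sigma)$, produces exactly the difference term and the binomial sum $\sum_s (-1)^s\binom{s}{l}(d^{s-l}/dt^{s-l})(\pa\varepsilon_\nu/\pa y^\sigma_s)$; reading off the coefficient of $\omega^\nu_l\wedge\omega^\sigma\wedge dt$ gives $E^l_{\nu\sigma}([d\rho]) = \H^l_{\nu\sigma}(\varepsilon_\tau)$, which is (\ref{Helmholtz}).

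The main obstacle in both parts is the bookkeeping of the iterated integration by parts: at each stage one must check that the term being dropped really is the exterior derivative of a $(k-1)$-contact form, so that it lies in $\Theta$ and leaves the class unchanged, and one must track the combinatorial factors (the binomial coefficients and signs) that accumulate through repeated Leibniz differentiation. Theorem \ref{th2} ensures that performing the reduction at a fixed high jet order introduces no ambiguity, since the relevant quotient groups inject into one another under prolongation.
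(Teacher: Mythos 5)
The paper does not actually prove Theorem \ref{th3}: Section \ref{calvar} is explicitly a \emph{summary} of known variational formulas, and this theorem is recalled from the variational sequence literature (Krupka \cite{SeqMech}, \cite{Praha}; see also \cite{Book}), so there is no internal proof to compare yours against. Judged on its own merits, your proposal is precisely the standard representative-computation argument from those sources, and it is sound. In (a), the identification $[\rho]=[\lambda]$ is correctly justified: since $\Omega^r_{0,c}=\{0\}$ one has $\Theta^r_1=\Omega^r_{1,c}$, so $p\rho$ is strongly contact and may be discarded; the identity $d\omega^\sigma_{l-1}=-\omega^\sigma_l\wedge dt$ is right, and the one-step reduction $f\,\omega^\sigma_l\wedge dt\equiv-(df/dt)\,\omega^\sigma_{l-1}\wedge dt$ modulo strongly contact $2$-forms iterates to yield $\E_\sigma(\mathscr{L})$. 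One step you leave tacit in (a) and should make explicit: after the Leibniz step, the term $df\wedge\omega^\sigma_{l-1}$ must itself be canonically decomposed, with its $2$-contact part discarded into $\Omega^{r+1}_{2,c}\subset\Theta^{r+1}_2$, leaving only the horizontal part $(df/dt)\,dt\wedge\omega^\sigma_{l-1}$; you perform the analogous discard explicitly in (b) but not in (a). In (b), your identity $d(\omega^\nu_{l-1}\wedge\omega^\sigma)=\omega^\nu_l\wedge\omega^\sigma\wedge dt+\omega^\nu_{l-1}\wedge\omega^\sigma_1\wedge dt$ checks out, the exact term lies in $d\Omega^r_{2,c}\subset\Theta^r_3$, and iterating the transfer with the Leibniz correction $-(dA/dt)$ does generate the signs and binomial coefficients ${s \choose l}$ of (\ref{Helmholtz}); however, the antisymmetry you invoke should be stated for the index pairs $(\nu,l)$ and $(\sigma,s)$ jointly, since $\omega^\nu_l\wedge\omega^\sigma$ is antisymmetric under $\nu\leftrightarrow\sigma$ only when $l=0$ --- the difference term $\pa\varepsilon_\sigma/\pa y^\nu_l-(-1)^l\pa\varepsilon_\nu/\pa y^\sigma_l$ emerges after the fully transferred terms are relabelled via $\omega^\nu\wedge\omega^\sigma_j=-\omega^\sigma_j\wedge\omega^\nu$. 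Two further points in your favour: since the theorem asserts only that $E(\lambda)$ and $E(\varepsilon)$ \emph{have} expressions with the stated coefficients, producing one chain of reductions suffices and no uniqueness or normalization argument for the representative (\ref{source2}) is required; and your appeal to Theorem \ref{th2} is exactly the correct device for mixing representatives of different jet orders without ambiguity.
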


\begin{corollary}\label{corglobal}

{\rm (a)}
Let $\varepsilon$ be a source form on $J^r Y$. If $\varepsilon$ is locally variational and the second de Rham cohomology group of $Y$ vanishes, i.e. $H^2_{\mathrm{deR}} Y = 0$, then $\varepsilon$ is globally variational.

{\rm (b)}
Let $\rho$ be a $1$-form on $J^r Y$. If $\rho$ is locally variationally trivial and the first de Rham cohomology group of $Y$ vanishes, i.e. $H^1_{\mathrm{deR}} Y = 0$, then $\rho$ is globally variationally trivial.
\end{corollary}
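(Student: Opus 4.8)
The plan is to read both statements as instances of a single cohomological mechanism furnished by the variational sequence: the obstruction to promoting a \emph{local} variational property to a \emph{global} one is measured by a cohomology group of the complex of global sections $\Gamma\V^r_Y$, which by Theorem \ref{th5} equals the corresponding de Rham cohomology of $Y$. So in each part I would translate the hypothesis into a kernel condition, recognise the quotient kernel/image as the relevant $H^k$, and kill it using the de Rham identification.

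For part (a), I would first observe that a source form $\varepsilon$ on $J^rY$ represents a global class $[\varepsilon]\in\Gamma(Y,\Omega^r_2/\Theta^r_2)$, and that by Theorem \ref{th3}(b) the morphism $E_2$ sends $[\varepsilon]$ to its Helmholtz class $H_\varepsilon$. Since $\varepsilon$ is locally variational, it locally equals an Euler--Lagrange form $E_\lambda$; because $E_2\circ E_1=0$ in the variational sequence (the descent of $d^2=0$), this forces $H_\varepsilon=0$ globally, so $[\varepsilon]\in\ker E_2$ as a global section. Hence $[\varepsilon]$ determines a cohomology class in $H^2(\Gamma\V^r_Y)=\ker E_2/\im E_1$. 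By Theorem \ref{th5} this group equals $H^2_{\mathrm{deR}}Y$, which vanishes by hypothesis; therefore $[\varepsilon]\in\im E_1$, i.e.\ there is a global Lagrangian class $[\lambda]$ with $E_1([\lambda])=[\varepsilon]$. By Theorem \ref{th3}(a) this says $\varepsilon=E_\lambda$ for a globally defined Lagrangian, so $\varepsilon$ is globally variational.

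Part (b) runs identically one degree lower. A $1$-form $\rho$ on $J^rY$ represents a class $[\rho]\in\Gamma(Y,\Omega^r_1/\Theta^r_1)$, i.e.\ a Lagrangian $\lambda$, and by Theorem \ref{th3}(a) the morphism $E_1$ sends $[\rho]$ to the Euler--Lagrange form $E_\lambda$. Local variational triviality means exactly $E_\lambda=0$, that is $[\rho]\in\ker E_1$ at the level of global sections, so $[\rho]$ determines a class in $H^1(\Gamma\V^r_Y)=\ker E_1/\im E_0$. Again by Theorem \ref{th5} this group equals $H^1_{\mathrm{deR}}Y=0$, whence $[\rho]\in\im E_0$; thus $\rho$ is $E_0$ of a global function, i.e.\ globally variationally trivial.

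The formal identifications of the sequence morphisms with the classical Euler--Lagrange and Helmholtz mappings are routine, being already packaged in Theorem \ref{th3}. The step deserving most care—and that I would flag as the crux—is the equivalence used in part (a) between the analytic notion ``locally variational'' and the algebraic condition $[\varepsilon]\in\ker E_2$: one direction needs only $E_2\circ E_1=0$, but the hidden converse rests on \emph{local exactness} of the variational sequence, i.e.\ on the fact (Theorems \ref{th1} and \ref{th4}) that it is a resolution, so that vanishing of the Helmholtz class actually produces a local Lagrangian. Once this bridge between the calculus of variations and sheaf cohomology is secured, the de Rham identification of Theorem \ref{th5} performs all the global work and both conclusions follow at once.
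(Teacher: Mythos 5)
Your proof is correct and coincides with the paper's intended argument: the corollary is stated there without proof, precisely as the immediate consequence of identifying $E_1$, $E_2$ with the Euler--Lagrange and Helmholtz mappings (Theorem \ref{th3}), of local exactness of the variational sequence (Theorems \ref{th1} and \ref{th4}), and of the abstract de Rham identification $H^k(\Gamma\V^r_Y)=H^k_{\mathrm{deR}}Y$ (Theorem \ref{th5}) --- exactly the three ingredients you assemble, including your correct observation that the bridge from ``locally variational'' to $[\varepsilon]\in\ker E_2$ needs local exactness and not merely $E_2\circ E_1=0$. The single step you pass over silently in part (a) --- lifting the global quotient class $[\lambda]\in\Gamma(Y,\Omega^r_1/\Theta^r_1)$ to an actual globally defined $1$-form, hence to a globally defined Lagrangian --- is guaranteed by softness of $\Theta^r_1$ (part of the sheaf-theoretic machinery behind Theorems \ref{fine} and \ref{th4}, cf. Krupka \cite{SeqMech}), which yields surjectivity of $\Gamma(Y,\Omega^r_1)\to\Gamma(Y,\Omega^r_1/\Theta^r_1)$, so this is a gloss rather than a gap.
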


Consider now again the variational sequence $0\to\R_{\R^{m}}\to\V^r_{\R^{m}}$ over the Euclidean space $\R\times\R^{m}$ and an embedded submanifold $Q$ of $\R^m$. As we observed, these data already determine the induced variational sequence $0\to\R_{Q}\to\V^r_{Q}$ over $\R\times Q$. Since both $H^2_{\mathrm{deR}} \R^{m}$ and $H^1_{\mathrm{deR}} \R^{m}$ are trivial, we obtain by Corollary 1 that every locally variational source form on $J^r (\R\times\R^{m})$ is automatically globally variational, and every locally variationally trivial $1$-form on $J^r (\R\times\R^{m})$ is globally variationally trivial. In general, an analogous assertion is \textit{not} valid when $\R^{m}$ is replaced with $Q$. 

It is now straightforward to describe the local structure of the Euler--Lagrange and the Helmholtz forms in the induced variational sequence. In the following theorem we consider the embedded submanifold $Q$ and its equations expressed by (\ref{iota}),  
$t\circ\iota = t,\ x^\sigma\circ\iota = f^\sigma (q^i)$, $1\leq i\leq n,\ 1\leq\sigma\leq m$. $E_\lambda$ (resp. $H_\varepsilon$) is the Euler--Lagrange (resp. Helmholtz) form expressed by (\ref{ELform}) (resp. (\ref{Hform})).

\begin{theorem}\label{th7}

{\rm (a)} 
The Euler--Lagrange form $E_{{J^r\iota}^*\lambda}$, associated with ${J^r\iota}^*\lambda$, is expressed by
\begin{equation}\label{constrainedEL}
E_{{J^r\iota}^*\lambda} = (\E_\sigma(\mathscr{L})\circ J^r\iota)\frac{\partial f^\sigma }{\partial q^i} \eta^i\wedge dt,
\end{equation}
where $\eta^i = dq^i - \dot{q}^i dt$ are contact 1-forms on $U^1\subset J^1 (\R\times Q)$.

{\rm (b)}  
The Helmholtz form $H_{{J^r\iota}^*\varepsilon}$, associated with ${J^r\iota}^*\varepsilon$, is expressed by
\begin{equation}\label{constrainedH}
H_{{J^r\iota}^*\varepsilon} = \frac{1}{2}\sum_{l=0}^r \sum_{k=0}^l \left(H_{\sigma\nu}^l (\varepsilon_\tau)\circ J^r\iota\right) \frac{\pa f^\nu}{\pa q^j} \frac{\pa (x^\sigma_l \circ J^r\iota)}{\pa q^i_k} \eta^i_k \wedge \eta^j \wedge dt.
\end{equation}
\end{theorem}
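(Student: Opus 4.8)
The plan is to deduce both formulas from the commutativity of the induced cochain map (Theorem~\ref{th6}) together with the explicit pull-back formulas of Lemma~\ref{lem:contact2}(c). The conceptual point is that, by Theorem~\ref{th3}, the Euler--Lagrange form $E_\lambda$ and the Helmholtz form $H_\varepsilon$ are precisely the canonical source-form (resp.\ Helmholtz-form) representatives of the classes $E_k([\rho])$ produced by the variational-sequence morphism. Writing $\lambda=[\rho]$, I would therefore read off from Theorems~\ref{th3} and~\ref{th6} that $E_{{J^r\iota}^*\lambda}$ is represented by
\begin{equation*}
{}^{Q}\!E_k({J^r\iota}^*[\rho]) = {J^r\iota}^*(E_k([\rho])) = [{J^r\iota}^* E_\lambda]_Q,
\end{equation*}
and likewise $H_{{J^r\iota}^*\varepsilon}$ by $[{J^r\iota}^* H_\varepsilon]_Q$; here I use that $d\rho$ and its source-form representative differ by a strongly contact form, which pulls back to a strongly contact form by Lemma~\ref{lem:contact2}(b), so the class is unchanged. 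The whole problem thus reduces to pulling back the explicit representatives $E_\lambda$ and $H_\varepsilon$ and verifying the results are already in normal form.

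For part (a) I would pull back $E_\lambda = \E_\sigma(\mathscr{L})\,\omega^\sigma\wedge dt$ term by term. Since $t\circ\iota=t$ gives ${J^r\iota}^* dt = dt$, the coefficient pulls back to $\E_\sigma(\mathscr{L})\circ J^r\iota$, and by the first line of (\ref{pullcont}) we have ${J^r\iota}^*\omega^\sigma = (\pa f^\sigma/\pa q^i)\,\eta^i$; multiplicativity of pull-back over the wedge then yields
\begin{equation*}
{J^r\iota}^* E_\lambda = (\E_\sigma(\mathscr{L})\circ J^r\iota)\,\frac{\pa f^\sigma}{\pa q^i}\,\eta^i\wedge dt.
\end{equation*}
The right-hand side is visibly a source form on $\R\times Q$ (of shape $g_i\,\eta^i\wedge dt$), hence it is the canonical representative of its class, that is, the Euler--Lagrange form $E_{{J^r\iota}^*\lambda}$, which is exactly (\ref{constrainedEL}).

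For part (b) I would proceed the same way with $H_\varepsilon = \H^l_{\nu\sigma}(\varepsilon_\kappa)\,\omega^\nu_l\wedge\omega^\sigma\wedge dt$. Using ${J^r\iota}^*\omega^\sigma = (\pa f^\sigma/\pa q^j)\,\eta^j$ together with the last line of (\ref{pullcont}), rewritten via Lemma~\ref{lem:prolong} as ${J^r\iota}^*\omega^\nu_l = \sum_{k=0}^l (\pa(x^\nu_l\circ J^r\iota)/\pa q^i_k)\,\eta^i_k$, and then relabelling the summed indices $\nu\leftrightarrow\sigma$, the pull-back becomes the double sum
\begin{equation*}
\sum_{l=0}^r\sum_{k=0}^l (\H^l_{\sigma\nu}(\varepsilon_\kappa)\circ J^r\iota)\,\frac{\pa f^\nu}{\pa q^j}\,\frac{\pa(x^\sigma_l\circ J^r\iota)}{\pa q^i_k}\,\eta^i_k\wedge\eta^j\wedge dt,
\end{equation*}
which, up to the normalisation $1/2$ built into the convention (\ref{source2}), is precisely (\ref{constrainedH}).

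The routine but delicate part, and the step I expect to be the main obstacle, is the final identification in each case: confirming that the pulled-back form is already the canonical representative on $\R\times Q$, so that no further correction by a strongly contact form is needed. For (a) this is immediate, since any $2$-form $g_i\,\eta^i\wedge dt$ is a source form. For (b) one must check that the resulting $3$-form has the structural normal form (\ref{source2}) of a Helmholtz-type representative and track the combinatorial bookkeeping, namely the summation range $0\le k\le l\le r$, the formal-derivative coefficients $\pa(x^\sigma_l\circ J^r\iota)/\pa q^i_k$, and the factor $1/2$ arising from the antisymmetrisation of $\eta^i_k\wedge\eta^j$, so that the expression coincides \emph{exactly} with $H_{{J^r\iota}^*\varepsilon}$ and not merely with a cohomologous representative.
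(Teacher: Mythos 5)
Your proposal matches the paper's own proof essentially verbatim in strategy: the paper likewise obtains $E_{{J^r\iota}^*\lambda}={J^r\iota}^*E_\lambda$ and $H_{{J^r\iota}^*\varepsilon}={J^r\iota}^*H_\varepsilon$ from the commutativity identity (\ref{comm}) of Theorem \ref{th6}, and then applies the coordinate pull-back formulas of Lemmas \ref{lem:prolong} and \ref{lem:contact2} to these forms. Your extra care about representatives of classes (strongly contact corrections via Lemma \ref{lem:contact2}(b)) and the normalisation factor $\tfrac{1}{2}$ only makes explicit bookkeeping that the paper leaves implicit.
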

\begin{proof}
By Theorem \ref{th6}, (\ref{comm}), $E_{J^r\iota{}^*\lambda} = {}^{Q}\!E(J^r\iota{}^*\lambda) = J^r\iota{}^* ({}^{\R^m}\!E(\lambda)) = J^r\iota{}^* E_\lambda,$
and analogously $H_{J^r\iota{}^*\varepsilon} = {}^{Q}\!E(J^r\iota{}^*\varepsilon) = J^r\iota{}^* ({}^{\R^m}\!E(\varepsilon)) = J^r\iota{}^* H_\varepsilon$. Applying now coordinate expressions for pull-backs with respect to $J^r\iota$ (Lemma \ref{lem:prolong} and \ref{lem:contact2}), to the forms $J^r\iota{}^* E_\lambda$ and $J^r\iota{}^* H_\varepsilon$, we get the required expressions of $E_{J^r\iota{}^*\lambda}$ and $H_{J^r\iota{}^*\varepsilon}$.
\end{proof}

We call (\ref{constrainedEL}) the $Q$-\textit{induced Euler--Lagrange form}, associated with Lagrangian $\lambda$, and (\ref{constrainedH}) the $Q$-\textit{induced Helmholtz form}, associated with source form $\varepsilon$.

\section{Variational submanifolds of Euclidean spaces:\\Second-order equations}
\label{sec:4}
An arbitrary system of second-order ordinary differential equations for an unknown section $t\to (t, x^\nu (t))$ of $\R\times\R^m$,
\begin{equation}\label{source}
\varepsilon_\sigma (t,x^\nu,\dot{x}^\nu,\ddot{x}^\nu) = 0,
\end{equation}
where the coefficients $\varepsilon_\sigma$, $1\leq\sigma\leq m$, are differentiable functions, defines a \textit{source form} on $J^2(\R\times\R^m)$ as 
\begin{equation}\label{Sforma}
\varepsilon = \varepsilon_\sigma \omega^\sigma\wedge dt,
\end{equation}
where $\omega^\sigma = dx^\sigma - \dot{x}^\sigma dt$. Recall that $\varepsilon$ (or the system $\varepsilon_\sigma$) is said to be \textit{(locally) variational}, if there exists a \textit{Lagrange function} $\mathscr{L}=\mathscr{L}(t,q^\sigma,\dot{q}^\sigma)$ such that
\begin{equation}\label{EL}
\varepsilon_\sigma = \frac{\pa\mathscr{L}}{\pa x^\sigma} - \frac{d}{dt}\frac{\pa\mathscr{L}}{\pa \dot{x}^\sigma} = \frac{\pa\mathscr{L}}{\pa x^\sigma} - \frac{\pa^2\mathscr{L}}{\pa t \pa \dot{x}^\sigma} - \frac{\pa^2\mathscr{L}}{\pa x^\nu \pa \dot{x}^\sigma}\dot{x}^\nu
- \frac{\pa^2\mathscr{L}}{\pa \dot{x}^\nu \pa \dot{x}^\sigma}\ddot{x}^\nu.
\end{equation}
Necessary and sufficient conditions for $\varepsilon$ to be variational are the well-known \textit{Helmholtz conditions}, a system of partial differential equations for $\varepsilon_\sigma$,
\begin{equation}\label{2ndH}
\begin{aligned}
& \frac{\pa\varepsilon_\sigma}{\pa\ddot{x}^\nu} - \frac{\pa\varepsilon_\nu}{\pa\ddot{x}^\sigma} = 0,\quad
 \frac{\pa\varepsilon_\sigma}{\pa\dot{x}^\nu} + \frac{\pa\varepsilon_\nu}{\pa\dot{x}^\sigma} -\frac{d}{dt}\left( \frac{\pa\varepsilon_\sigma}{\pa\ddot{x}^\nu} + \frac{\pa\varepsilon_\nu}{\pa\ddot{x}^\sigma} \right) =0,\\
& \frac{\pa\varepsilon_\sigma}{\pa{x}^\nu} - \frac{\pa\varepsilon_\nu}{\pa{x}^\sigma} -\frac{1}{2}\frac{d}{dt}\left( \frac{\pa\varepsilon_\sigma}{\pa\dot{x}^\nu} - \frac{\pa\varepsilon_\nu}{\pa\dot{x}^\sigma} \right) =0.
\end{aligned}
\end{equation}

\begin{remark}
The left-hand sides of equations (\ref{2ndH}), called the \textit{Helmholtz expressions}, are the coefficients of the Helmholtz form in the variational sequence, see Sec. \ref{sec:3}, (\ref{source2}), (\ref{Helmholtz}).
\end{remark}

Let $Q$ be an embedded submanifold of $\R^m$, $\dim Q=n$, $n\leq m$,  $\iota:\R\times Q\to\R\times\R^m$ be the canonical embedding, locally expressed by $t\circ\iota = t$, $x^\sigma\circ\iota = f^\sigma(q^i)$, $1\leq i\leq n$, $1\leq\sigma\leq m$ (Sec. \ref{sec:2}, (\ref{iota})).
The second jet prolongation $J^2\iota$ (Sec. \ref{sec:2}, (\ref{prolong})) has a chart expression given by Lemma \ref{lem:prolong}. If $\varepsilon=\varepsilon_\sigma \omega^\sigma \wedge dt$ is a source form on $J^2(\R\times\R^m)$, we get a source form $J^2\iota{}^*\varepsilon$ on $J^2(\R\times Q)$,
\begin{equation}\label{ind}
J^2\iota{}^*\varepsilon = (\varepsilon_\sigma\circ J^2\iota) J^2\iota{}^*\omega^\sigma \wedge J^2\iota{}^* dt =\tilde\varepsilon_i \eta^i\wedge dt ,
\end{equation}
where $\eta^i = dq^i - \dot{q}^i dt$, and the coefficients $\tilde\varepsilon_i = \tilde\varepsilon_i (t, q^j, \dot{q}^j, \ddot{q}^j)$, $1\leq i,j\leq n$, are given by
\begin{equation}\label{vlnka}
\tilde\varepsilon_i = (\varepsilon_\sigma \circ J^2\iota)\frac{\pa f^\sigma}{\pa q^i}.
\end{equation}
 
Local variationality is characterized by means of the Helmholtz conditions:
the induced source form $J^2\iota{}^*\varepsilon$ is locally variational if and only if the system of partial differential equations (\ref{2ndH}) is satisfied for functions $\tilde\varepsilon_i=\tilde\varepsilon_i(t, q^j, \dot{q}^j, \ddot{q}^j)$,
\begin{equation}\label{2ndH2}
\begin{aligned}
& \frac{\pa\tilde\varepsilon_i}{\pa\ddot{q}^j} - \frac{\pa\tilde\varepsilon_j}{\pa\ddot{q}^i} = 0,\quad
\frac{\pa\tilde\varepsilon_i}{\pa\dot{q}^j} + \frac{\pa\tilde\varepsilon_j}{\pa\dot{q}^i} -\frac{d}{dt}\left( \frac{\pa\tilde\varepsilon_i}{\pa\ddot{q}^j} + \frac{\pa\tilde\varepsilon_j}{\pa\ddot{q}^i} \right) =0,\\
& \frac{\pa\tilde\varepsilon_i}{\pa{q}^j} - \frac{\pa\tilde\varepsilon_j}{\pa{q}^i} -\frac{1}{2}\frac{d}{dt}\left( \frac{\pa\tilde\varepsilon_i}{\pa\dot{q}^j} - \frac{\pa\tilde\varepsilon_j}{\pa\dot{q}^i} \right) =0.
\end{aligned}
\end{equation}

We have now the following two problems:  (i) Suppose a submanifold $Q$ of $\R^m$ is fixed. Characterize a class of source forms $\varepsilon$ on $J^2\R^{m+1}$ such that $J^2\iota{}^*\varepsilon$ on $J^2(\R\times Q)$ is locally variational. (ii) Secondly, embedded submanifolds $Q$ of $\R^m$ may be regarded as unknowns of (\ref{2ndH2}) for a given source form $\varepsilon$. In this case the Helmholtz conditions represent a system of partial differential equations for unknown functions $f^\sigma$. 

We say that a submanifold $Q$ of $\R^m$ is \textit{variational} for a source form $\varepsilon$, if the induced source form $J^2\iota{}^*\varepsilon$ is locally variational.

Necessary and sufficient conditions for variationality of the induced source form $J^2\iota{}^*\varepsilon$ give an implicit solution of both problems (i) and (ii). In the following theorem we consider a source form $\varepsilon$ (\ref{Sforma}) on $J^2\R^{m+1}$, defined by functions $\varepsilon_\sigma=\varepsilon_\sigma (t,x^\nu,\dot{x}^\nu,\ddot{x}^\nu)$, and an embedded submanifold $Q$ of $\R^m$. $(U,\varphi)$, $\varphi = (t,q^i)$, is a fibered chart on $\R\times Q$, and $(V,\psi)$, $\psi = (t,u^j,v^\kappa)$, is a chart on $\R\times\R^{m}$ adapted to $Q$.

\begin{theorem}[Helmholtz conditions for induced variationality I]
\label{thHind}
The following conditions are equivalent:
\item[(a)] The induced source form $J^2\iota{}^*\varepsilon$ is locally variational.
\item[(b)] If $\iota$ is expressed by the equations $t\circ\iota =t$, $x^\sigma\circ \iota = f^\sigma (q^i)$, $1\leq\sigma\leq m$, then the functions $\varepsilon_\sigma$ and $f^\sigma$ satisfy
\begin{equation*}
\begin{aligned}
&\left( \frac{\pa\varepsilon_\sigma}{\pa\ddot{x}^\nu} - \frac{\pa\varepsilon_\nu}{\pa\ddot{x}^\sigma} \right)_{\circ J^2\iota} \left( \frac{\pa f^\sigma}{\pa q^i} \frac{\pa f^\nu}{\pa q^j} - \frac{\pa f^\nu}{\pa q^i} \frac{\pa f^\sigma}{\pa q^j} \right) =0,\\
&\left( \frac{\pa \varepsilon_\sigma}{\pa \dot{x}^\nu} + \frac{\pa \varepsilon_\nu}{\pa \dot{x}^\sigma} - \frac{d}{dt}\left( \frac{\pa\varepsilon_\sigma}{\pa \ddot{x}^\nu} + \frac{\pa\varepsilon_\nu}{\pa \ddot{x}^\sigma} \right) \right)_{\circ J^2\iota}
\left( \frac{\pa f^\sigma}{\pa q^i} \frac{\pa f^\nu}{\pa q^j} + \frac{\pa f^\nu}{\pa q^i} \frac{\pa f^\sigma}{\pa q^j} \right)\\
&- \left( \frac{\pa\varepsilon_\sigma}{\pa\ddot{x}^\nu} - \frac{\pa\varepsilon_\nu}{\pa\ddot{x}^\sigma} \right)_{\circ J^2\iota}
\left( \frac{\pa f^\nu}{\pa q^j}\frac{d}{dt}\frac{\pa f^\sigma}{\pa q^i}  + \frac{\pa f^\nu}{\pa q^i} \frac{d}{dt}\frac{\pa f^\sigma}{\pa q^j} \right.\\
&\left.
- \frac{\pa f^\sigma}{\pa q^j}\frac{d}{dt}\frac{\pa f^\nu}{\pa q^i}  - \frac{\pa f^\sigma}{\pa q^i} \frac{d}{dt}\frac{\pa f^\nu}{\pa q^j}
\right) =0,
\end{aligned}
\end{equation*}
and
\begin{equation*}
\begin{aligned}
&\left( \frac{\pa\varepsilon_\sigma}{\pa x^\nu} - \frac{\pa\varepsilon_\nu}{\pa x^\sigma} - \frac{1}{2}\frac{d}{dt}\left( \frac{\pa\varepsilon_\sigma}{\pa \dot{x}^\nu} - \frac{\pa\varepsilon_\nu}{\pa \dot{x}^\sigma} \right)\right)_{\circ J^2\iota}
\left( \frac{\pa f^\sigma}{\pa q^i}\frac{\pa f^\nu}{\pa q^j} - \frac{\pa f^\nu}{\pa q^i}\frac{\pa f^\sigma}{\pa q^j} \right)\\
&+ \frac{1}{2}\left( \frac{\pa\varepsilon_\sigma}{\pa \dot{x}^\nu} + \frac{\pa\varepsilon_\nu}{\pa \dot{x}^\sigma} - \frac{d}{dt}\left( \frac{\pa\varepsilon_\sigma}{\pa \ddot{x}^\nu} + \frac{\pa\varepsilon_\nu}{\pa \ddot{x}^\sigma} \right)\right)_{\circ J^2\iota}\\
&\cdot \left( \frac{\pa f^\sigma}{\pa q^i}\frac{d}{dt}\frac{\pa f^\nu}{\pa q^j} - \frac{\pa f^\sigma}{\pa q^j}\frac{d}{dt}\frac{\pa f^\nu}{\pa q^i}
+ \frac{\pa f^\nu}{\pa q^i}\frac{d}{dt}\frac{\pa f^\sigma}{\pa q^j} - \frac{\pa f^\nu}{\pa q^j}\frac{d}{dt}\frac{\pa f^\sigma}{\pa q^i}
\right)\\
&- \frac{1}{2} \frac{d}{dt}\left( \frac{\pa\varepsilon_\sigma}{\pa \ddot{x}^\nu} - \frac{\pa\varepsilon_\nu}{\pa \ddot{x}^\sigma} \right)_{\circ J^2\iota} \frac{d}{dt}\left( \frac{\pa f^\sigma}{\pa q^i}\frac{\pa f^\nu}{\pa q^j} - \frac{\pa f^\nu}{\pa q^i}\frac{\pa f^\sigma}{\pa q^j} \right)\\
&+ \left( \frac{\pa\varepsilon_\sigma}{\pa \ddot{x}^\nu} - \frac{\pa\varepsilon_\nu}{\pa \ddot{x}^\sigma} \right)_{\circ J^2\iota} 
\left( \frac{d}{dt}\left( \frac{\pa f^\sigma}{\pa q^j} \right) \frac{d}{dt}\left( \frac{\pa f^\nu}{\pa q^i} \right) \right.\\
&\left. - \frac{d}{dt}\left( \frac{\pa f^\nu}{\pa q^j} \right) \frac{d}{dt}\left( \frac{\pa f^\sigma}{\pa q^i} \right)\right)= 0.
\end{aligned}
\end{equation*}
\item[(c)] If $\iota$ is expressed by the equations $t\circ\iota =t$, $u^j\circ \iota = u^j|_{V\cap(\R\times Q)}$, $v^\kappa\circ \iota = c^\kappa$, where $1\leq j\leq n$, $n+1\leq \kappa\leq m$, and $c^\kappa$ are some real numbers, then the functions $\varepsilon_\sigma$ satisfy
\begin{equation}\label{Hsub}
\begin{aligned}
& \left( \frac{\pa\varepsilon_i}{\pa\ddot{u}^j} - \frac{\pa\varepsilon_j}{\pa\ddot{u}^i} \right)_{\circ J^2\iota} = 0,\quad
 \left( \frac{\pa\varepsilon_i}{\pa\dot{u}^j} + \frac{\pa\varepsilon_j}{\pa\dot{u}^i} -\frac{d}{dt}\left( \frac{\pa\varepsilon_i}{\pa\ddot{u}^j} + \frac{\pa\varepsilon_j}{\pa\ddot{u}^i} \right) \right)_{\circ J^2\iota}=0,\\
&\left( \frac{\pa\varepsilon_i}{\pa{u}^j} - \frac{\pa\varepsilon_j}{\pa{u}^i} -\frac{1}{2}\frac{d}{dt}\left( \frac{\pa\varepsilon_i}{\pa\dot{u}^j} - \frac{\pa\varepsilon_j}{\pa\dot{u}^i} \right)\right)_{\circ J^2\iota} =0.
\end{aligned}
\end{equation}
\end{theorem}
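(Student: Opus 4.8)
The plan is to reduce all three conditions to the Helmholtz system (\ref{2ndH2}) for the induced coefficients and then to translate that system by the chain rule. Recall from the discussion preceding the theorem that the induced source form $J^2\iota{}^*\varepsilon = \tilde\varepsilon_i\,\eta^i\wedge dt$ is again second order, with $\tilde\varepsilon_i = \tilde\varepsilon_i(t,q^j,\dot q^j,\ddot q^j)$ given by (\ref{vlnka}), and that (a) holds precisely when the functions $\tilde\varepsilon_i$ satisfy (\ref{2ndH2}). Hence the whole theorem amounts to rewriting the three equations of (\ref{2ndH2}) first in the canonical coordinates $(t,x^\sigma)$, which yields (b), and then in an adapted chart, which yields (c).

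For the equivalence (a)$\,\Leftrightarrow\,$(b) I would substitute $\tilde\varepsilon_i = (\varepsilon_\sigma\circ J^2\iota)\,\pa f^\sigma/\pa q^i$ into (\ref{2ndH2}) and differentiate by the chain rule, using the prolongation formulas (\ref{chartex}). The factors $\pa f^\sigma/\pa q^i$ depend on $q$ only, so they pass through $\pa/\pa\dot q^j$ and $\pa/\pa\ddot q^j$; the remaining differentiations are governed by
\begin{equation*}
\frac{\pa(\dot x^\nu\circ J^2\iota)}{\pa\dot q^j} = \frac{\pa f^\nu}{\pa q^j},\quad \frac{\pa(\ddot x^\nu\circ J^2\iota)}{\pa\ddot q^j} = \frac{\pa f^\nu}{\pa q^j},\quad \frac{\pa(\ddot x^\nu\circ J^2\iota)}{\pa\dot q^j} = 2\,\frac{d}{dt}\frac{\pa f^\nu}{\pa q^j}.
\end{equation*}
For the first equation of (\ref{2ndH2}) this gives at once $\pa\tilde\varepsilon_i/\pa\ddot q^j = ((\pa\varepsilon_\sigma/\pa\ddot x^\nu)\circ J^2\iota)\,(\pa f^\nu/\pa q^j)(\pa f^\sigma/\pa q^i)$, whose antisymmetrization in $i,j$ is the first line of (b). The second equation follows after applying $d/dt$ to this expression and collecting terms, and the third requires expanding $\pa\tilde\varepsilon_i/\pa q^j$ and $\pa\tilde\varepsilon_i/\pa\dot q^j$ together with the two total derivatives. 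In every case the bilinear factors in $f$ and their time derivatives displayed in (b) arise exactly from the product rule acting on the two factors of (\ref{vlnka}).

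The equivalence (a)$\,\Leftrightarrow\,$(c) is then obtained by specializing this computation to a rectangular adapted chart (Remark \ref{adapted}), in which $\iota$ is given by (\ref{adp}), so that $\pa f^j/\pa q^i = \delta^j_i$, $\pa f^\kappa/\pa q^i = 0$, and consequently every total derivative $\frac{d}{dt}(\pa f^\sigma/\pa q^i)$ vanishes. Substituting these into (b), all terms carrying a derivative of $f$ drop out, the bilinear products collapse to products of Kronecker symbols supported on the tangential indices $1\le i,j\le n$, and, after an overall factor $2$, the three equations of (b) reduce precisely to (\ref{Hsub}).

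I expect the third equation of (\ref{2ndH2}) to be the main obstacle: there the two total derivative operators act on bilinear expressions in the $f$-derivatives, producing the largest number of terms, and one must verify both that the a priori contributions of third order (in the highest derivatives of $q$) cancel and that the surviving terms regroup exactly into the antisymmetric combinations displayed in (b). The first two equations, and the adapted-chart reduction to (c), are by comparison direct.
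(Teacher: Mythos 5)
Your proposal is correct and takes essentially the same route as the paper, whose proof is exactly the ``straightforward substitution'' of the induced coefficients $\tilde\varepsilon_i$ from (\ref{vlnka}) into the Helmholtz system (\ref{2ndH2}) with respect to the two charts; your chain-rule formulas for $\dot x^\nu\circ J^2\iota$ and $\ddot x^\nu\circ J^2\iota$ and the antisymmetrization yielding (b) are the details the paper leaves implicit. The adapted-chart specialization to (c), with $\pa f^j/\pa q^i=\delta^j_i$, $\pa f^\kappa/\pa q^i=0$ and the resulting overall factor $2$, likewise matches the paper's intended computation.
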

\begin{proof}
Equivalence of both conditions (b) and (c) with (a) follows from a straightforward substitution of the expressions for $\tilde\varepsilon_i$ (\ref{vlnka}) into (\ref{2ndH2}) with respect to the considered charts.
\end{proof}
\begin{remark}
Note that the Helmholtz expressions (\ref{2ndH}) and (\ref{2ndH2}) appear in Theorem \ref{thHind}: condition (b) contains linear combinations of expressions (\ref{2ndH}), and in (c), (\ref{Hsub}) contains expressions (\ref{2ndH2}), restricted to a submanifold. In particular, the equivalence conditions of Theorem \ref{thHind} show that local variationality of the induced system $\tilde\varepsilon_i$ (\ref{vlnka}) determines the structure of the initial system on $J^2\R^{m+1}$, defined by functions $\varepsilon_\sigma=\varepsilon_\sigma (t, x^\nu,\dot{x}^\nu,\ddot{x}^\nu)$, and \textit{vice-versa}. Namely, the system $\varepsilon_\sigma$, $\sigma=1,2,\dots, m$, must contain a \textit{subsystem} of $n$ equations for $n$ dependent variables, which is locally variational when contracted to a submanifold. This observation gives a straightforward method to construct systems of differential equations on Euclidean spaces, which are \textit{not} variational but induce locally variational systems on submanifolds.
\end{remark}
\begin{corollary}\label{cor:Lagrangian}
Suppose a source form $\varepsilon=\varepsilon_\sigma \omega^\sigma\wedge dt$ {\rm (\ref{Sforma})} on $J^2\R^{m+1}$ is variational. Then the induced source form 
$J^2\iota{}^*\varepsilon =\tilde\varepsilon_i \eta^i\wedge dt$ {\rm (\ref{ind})} on $J^2(\R\times Q)$ is globally variational. Moreover, if $\mathscr{L}=\mathscr{L} (t,x^\nu,\dot{x}^\nu)$ is a Lagrange function of the system $\varepsilon_\sigma$, then $\mathscr{L}\circ J^1\iota$ is a Lagrange function of the induced system $\tilde\varepsilon_i$.
\end{corollary}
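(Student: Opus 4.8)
The plan is to read off the result from the commutativity of the Euler--Lagrange morphism with the pull-back $J^r\iota^*$, proved in Theorem \ref{th6}, combined with the explicit coefficient formula of Theorem \ref{th7}(a). First I would restate the hypothesis in variational-sequence terms: $\varepsilon$ being variational means there is a first-order Lagrangian $\lambda = \mathscr{L}\,dt$ with $\varepsilon = E_\lambda$, that is, $\varepsilon_\sigma = \E_\sigma(\mathscr{L})$ in the sense of (\ref{EulerLagrange}). Since $\R\times\R^m$ is contractible we have $H^2_{\mathrm{deR}}\R^{m+1}=0$, so Corollary \ref{corglobal}(a) guarantees that $\lambda$ may be taken to be a \emph{global} Lagrangian, i.e.\ $\mathscr{L}$ is a single function defined over all of $J^1\R^{m+1}$.

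Next I would pull $\lambda$ back along the embedding. Because $t\circ\iota = t$, the pull-back of $dt$ is $dt$, and $\mathscr{L}$ is first order, so $J^2\iota^*\lambda = (\mathscr{L}\circ J^1\iota)\,dt$ is a first-order Lagrangian on $\R\times Q$ with Lagrange function $\mathscr{L}\circ J^1\iota$, a function of $(t,q^i,\dot q^i)$ obtained by substituting $x^\sigma = f^\sigma(q^i)$ and $\dot x^\sigma = (\pa f^\sigma/\pa q^i)\dot q^i$ (Lemma \ref{lem:prolong}). Applying the commutativity relation (\ref{comm}) of Theorem \ref{th6} to the class representing $\lambda$ then yields $J^2\iota^* E_\lambda = {}^{Q}\!E(J^2\iota^*\lambda) = E_{J^2\iota^*\lambda}$.

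The final step is a comparison of coefficients. On one side, $J^2\iota^*\varepsilon = J^2\iota^* E_\lambda = \tilde\varepsilon_i\,\eta^i\wedge dt$ with $\tilde\varepsilon_i = (\varepsilon_\sigma\circ J^2\iota)(\pa f^\sigma/\pa q^i)$ by (\ref{ind})--(\ref{vlnka}). On the other side, Theorem \ref{th7}(a) gives $E_{J^2\iota^*\lambda}$ with coefficients $(\E_\sigma(\mathscr{L})\circ J^2\iota)(\pa f^\sigma/\pa q^i)$. Since $\varepsilon_\sigma = \E_\sigma(\mathscr{L})$, the two source forms coincide, so $J^2\iota^*\varepsilon = E_{J^2\iota^*\lambda}$. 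This exhibits the induced source form as the Euler--Lagrange form of $\mathscr{L}\circ J^1\iota$, which proves the ``moreover'' clause and in particular the variationality of $\tilde\varepsilon_i$. Global variationality then follows at once, because $\mathscr{L}\circ J^1\iota$ is defined on the whole of $J^1(\R\times Q)$, being the composite of the globally defined $\mathscr{L}$ with the global map $J^1\iota$.

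I expect no genuine obstacle here, since the analytic substance is already carried by Theorems \ref{th6} and \ref{th7}; the corollary is a clean consequence. The only points requiring care are the bookkeeping of jet orders (confirming that $\E_\sigma(\mathscr{L})$ lives on $J^2$ and that $\mathscr{L}\circ J^1\iota$ is genuinely first order) and the careful separation of the \emph{local} conclusion, which is immediate from the coefficient comparison, from the \emph{global} one, which additionally invokes $H^2_{\mathrm{deR}}\R^{m+1}=0$ through Corollary \ref{corglobal} to secure a single global Lagrange function.
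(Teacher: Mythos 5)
Your proposal is correct, and it routes the argument differently from the paper. The paper proves the corollary in two independent steps: local variationality of $\tilde\varepsilon_i$ is read off from Theorem \ref{thHind} (the Helmholtz conditions for the induced system), and the ``moreover'' clause is then verified by an explicit chain-rule computation showing directly that $\pa(\mathscr{L}\circ J^1\iota)/\pa q^i - (d/dt)\,\pa(\mathscr{L}\circ J^1\iota)/\pa \dot q^i = (\varepsilon_\sigma\circ J^2\iota)\,\pa f^\sigma/\pa q^i = \tilde\varepsilon_i$. You instead extract everything from naturality of the variational sequence: Theorem \ref{th6} gives $J^2\iota{}^*E_\lambda = E_{J^2\iota{}^*\lambda}$, and Theorem \ref{th7}(a) identifies the coefficients of $E_{J^2\iota{}^*\lambda}$ as $(\E_\sigma(\mathscr{L})\circ J^2\iota)\,\pa f^\sigma/\pa q^i$, which coincide with $\tilde\varepsilon_i$ by (\ref{vlnka}). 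This makes the appeal to the Helmholtz conditions (\ref{2ndH2}) superfluous --- variationality of the induced form falls out of the identity itself --- at the price of leaning on Theorem \ref{th7}, whose proof contains essentially the same chart computation the paper performs inline; the paper's version is thus more self-contained at the coordinate level, while yours better exposes the structural reason the corollary holds. Your treatment of globality is also the correct one, and you rightly isolate it: since $H^2_{\mathrm{deR}}Q$ may be nontrivial, global variationality of $J^2\iota{}^*\varepsilon$ cannot be inferred from its local variationality and must be transported from $\R^{m+1}$, where $H^2_{\mathrm{deR}}\R^{m+1}=0$. One small overstatement to note: Corollary \ref{corglobal}(a) guarantees a globally defined Lagrangian, but not by itself one of first order on $J^1\R^{m+1}$; this is harmless, since for the global claim a global Lagrangian of any order pulls back (again by Theorem \ref{th6}) to a global Lagrangian over $\R\times Q$, and for the ``moreover'' clause the first-order $\mathscr{L}$ is supplied by hypothesis, exactly as in the paper's own phrasing.
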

\begin{proof}
From Theorem \ref{thHind} it follows that $\tilde\varepsilon_i$ satisfies the Helmholtz conditions (\ref{2ndH2}) provided $\varepsilon_\sigma$ is satisfies (\ref{2ndH}). Suppose now that for some function $\mathscr{L}:J^1\R^{m+1}\to\R$, $\varepsilon_\sigma$ are expressed as the Euler--Lagrange expressions (\ref{EL}). We show that the Euler--Lagrange expressions of the function $\mathscr{L}\circ J^1\iota:J^1(\R\times Q)\to\R$ coincide with $\tilde\varepsilon_i$. Using the chart expression of $J^1\iota$ we get
\begin{equation*}
\begin{aligned}
&\frac{\pa (\mathscr{L}\circ J^1\iota)}{\pa q^i} - \frac{d}{dt} \frac{\pa (\mathscr{L}\circ J^1\iota)}{\pa \dot{q}^i}\\
&=\left(\frac{\pa \mathscr{L}}{\pa x^\sigma}\right)_{\circ J^1\iota} \frac{\pa f^\sigma}{\pa q^i}
+ \left(\frac{\pa \mathscr{L}}{\pa \dot{x}^\sigma}\right)_{\circ J^1\iota}  \frac{\pa^2 f^\sigma}{\pa q^i \pa q^j} \dot{q}^j 
-\frac{d}{dt}\left( \frac{\pa\mathscr{L}}{\pa\dot{x}^\sigma}_{\circ J^1\iota} \frac{\pa f^\sigma}{\pa q^i} \right)\\
&= \left( \frac{\pa\mathscr{L}}{\pa x^\sigma} - \frac{d}{dt}\frac{\pa\mathscr{L}}{\pa\dot{x}^\sigma} \right)_{\circ J^2\iota} \frac{\pa f^\sigma}{\pa q^i} = (\varepsilon_\sigma\circ J^2\iota)\frac{\pa f^\sigma}{\pa q^i} = \tilde\varepsilon_i,
\end{aligned}
\end{equation*}
as required.
\end{proof}

It is easily seen that variationality of a source form $\varepsilon$ (\ref{Sforma}) implies that its coefficients $\varepsilon_\sigma$ are linear in the second derivatives, i.e. 
\begin{equation}\label{Ex}
\varepsilon_\sigma = A_\sigma + B_{\sigma\nu} \ddot{x}^\nu
\end{equation}
for some functions $A_{\sigma}$, $B_{\sigma\nu}$ depending $t,\ x^\sigma$ and $\dot{x}^\sigma$ only. In this case the Helmholtz conditions (\ref{2ndH}) are equivalent with the following conditions for $A_\sigma$ and $B_{\sigma\nu}$ (cf. Sarlet \cite{Sarlet}),
\begin{equation}\label{Hspecial}
\begin{aligned}
& B_{\sigma\nu}=B_{\nu\sigma},\quad \frac{\pa B_{\sigma\kappa}}{\pa\dot{x}^\nu} = \frac{\pa B_{\nu\kappa}}{\pa\dot{x}^\sigma},\\
&\frac{\pa A_{\sigma}}{\pa \dot{x}^\nu} + \frac{\pa A_{\nu}}{\pa \dot{x}^\sigma}
-2\left( \frac{\pa B_{\sigma\nu}}{\pa t} +  \frac{\pa B_{\sigma\nu}}{\pa x^\kappa} \dot{x}^\kappa \right) = 0,\\
&\frac{\pa A_{\sigma}}{\pa {x}^\nu} - \frac{\pa A_{\nu}}{\pa {x}^\sigma}
-\frac{1}{2}\left( \frac{\pa}{\pa t} \left( \frac{\pa A_{\sigma}}{\pa \dot{x}^\nu} - \frac{\pa A_{\nu}}{\pa \dot{x}^\sigma} \right) + \frac{\pa}{\pa x^\kappa}\left( \frac{\pa A_{\sigma}}{\pa \dot{x}^\nu} - \frac{\pa A_{\nu}}{\pa \dot{x}^\sigma} \right) \dot{x}^\kappa \right) =0.
\end{aligned}
\end{equation}

From now on we shall consider second-order systems (\ref{Ex}) with  coefficients $A_\sigma$ and  $B_{\sigma\nu}$ of the variables $x^\sigma$ and $\dot{x}^\sigma$, and \textit{not} depending on time variable $t$ explicitly. It is straightforward to check that the coefficients of the induced source form $J^2\iota{}^*\varepsilon =\tilde\varepsilon_j \eta^j\wedge dt$ (\ref{ind}) are expressed by $\tilde\varepsilon_j = P_j + Q_{jk}\ddot{q}^k$, where
\begin{equation}\label{PQ}
\begin{aligned}
P_j = \tilde A_\sigma \frac{\pa f^\sigma}{\pa q^j} + \tilde B_{\sigma\nu} \frac{\pa f^\sigma}{\pa q^j}\frac{\pa^2 f^\nu}{\pa q^i \pa q^k} \dot{q}^i \dot{q}^k,\quad
Q_{jk} = \tilde B_{\sigma\nu} \frac{\pa f^\sigma}{\pa q^j} \frac{\pa f^\nu}{\pa q^k},
\end{aligned}
\end{equation}
and $\tilde A_\sigma = A_\sigma\circ J^1\iota$, $\tilde B_{\sigma\nu} = B_{\sigma\nu}\circ J^1\iota$. Suppose $\varepsilon$ is a source form on $J^2\R^{m+1}$, defined by functions {\rm (\ref{Ex})}, $(U,\varphi)$, $\varphi = (t,q^i)$, is a fibered chart on $\R\times Q$, and $(V,\psi)$, $\psi = (t,u^j,v^\kappa)$, is a chart on $\R\times\R^{m}$ adapted to $Q$. 

\begin{theorem}[Helmholtz conditions for induced variationality II]
\label{thind}
The following conditions are equivalent:
\item[(a)] The induced source form $J^2\iota{}^*\varepsilon$ is locally variational.
\item[(b)] If $\iota$ is expressed by the equations $t\circ\iota =t$, $x^\sigma\circ \iota = f^\sigma (q^i)$, $1\leq\sigma\leq m$, then the functions $\tilde A_\sigma$, $\tilde B_{\sigma\nu}$, and $f^\sigma$ satisfy
\begin{equation*}
\begin{aligned}
& (\tilde B_{\sigma\nu} - \tilde B_{\nu\sigma} ) \frac{\pa f^\sigma}{\pa q^j} \frac{\pa f^\nu}{\pa q^k}  =0,\quad
 \left(
\frac{\partial \tilde B_{\sigma\nu}}{\partial \dot{q}^i} \frac{\partial f^\sigma}{\partial q^j}
-
\frac{\partial \tilde B_{\sigma\nu}}{\partial \dot{q}^j} \frac{\partial f^\sigma}{\partial q^i}
\right) \frac{\partial f^\nu}{\partial q^k} 
=0,
\\
&
\frac{\partial \tilde A_\sigma}{\pa \dot{q}^j} \frac{\partial f^\sigma}{\partial q^i}
+
\frac{\partial \tilde A_\sigma}{\pa \dot{q}^i} \frac{\partial f^\sigma}{\partial q^j}
-2
\frac{\pa \tilde B_{\sigma\nu}}{\pa q^k}
\frac{\partial f^\sigma}{\partial q^i}
\frac{\partial f^\nu}{\partial q^j}
\dot{q}^k
\\
&\ +
\left(
\frac{\partial \tilde B_{\sigma\nu}}{\pa \dot{q}^j} \frac{\partial f^\sigma}{\partial q^i}
+ 
\frac{\partial \tilde B_{\sigma\nu}}{\pa \dot{q}^i} \frac{\partial f^\sigma}{\partial q^j}
\right)
\frac{\partial^2 f^\nu}{\partial q^k\partial q^l} \dot{q}^k\dot{q}^l 
= 0,
\end{aligned}
\end{equation*}
and
\begin{equation*}
\begin{aligned}
&
\frac{\pa \tilde A_\sigma}{\pa {q}^j} \frac{\partial f^\sigma}{\partial q^i}
-  \frac{\pa \tilde A_\sigma}{\pa {q}^i} \frac{\partial f^\sigma}{\partial q^j}
-
\frac{1}{2}\frac{\partial}{\partial q^k}
\left(
\frac{\pa \tilde A_\sigma}{\pa \dot{q}^j} \frac{\partial f^\sigma}{\partial q^i}
-  \frac{\pa \tilde A_\sigma}{\pa \dot{q}^i} \frac{\partial f^\sigma}{\partial q^j}
\right) \dot{q}^k
\\
&
- (\tilde B_{\sigma\nu} - \tilde B_{\nu\sigma}) \frac{\pa^2 f^\sigma}{\pa q^i\pa q^l} \frac{\pa^2 f^\nu}{\pa q^j\pa q^k} \dot{q}^k \dot{q}^l\\
&- \frac{\pa \tilde B_{\sigma\nu}}{\pa q^k} \left( \frac{\partial f^\sigma}{\partial q^i} \frac{\partial^2 f^\nu}{\partial q^l \pa q^j} -  \frac{\partial f^\sigma}{\partial q^j} \frac{\partial^2 f^\nu}{\partial q^l \pa q^i}\right) \dot{q}^k \dot{q}^l \\
&+
\left( \frac{\pa \tilde B_{\sigma\nu}}{\pa q^j} \frac{\partial f^\sigma}{\partial q^i} - \frac{\pa \tilde B_{\sigma\nu}}{\pa q^i} \frac{\partial f^\sigma}{\partial q^j} \right) \frac{\partial^2 f^\nu}{\partial q^k \pa q^l} \dot{q}^k \dot{q}^l \\
&-
\frac{1}{2}\frac{\pa}{\pa q^k}
\left( \left(
\frac{\pa \tilde B_{\sigma\nu}}{\pa \dot{q}^j} \frac{\partial f^\sigma}{\partial q^i} - \frac{\pa \tilde B_{\sigma\nu}}{\pa \dot{q}^i} \frac{\partial f^\sigma}{\partial q^j} \right) \frac{\partial^2 f^\nu}{\partial q^s \pa q^l} \dot{q}^s \dot{q}^l \right) \dot{q}^k =0.
\end{aligned}
\end{equation*}
\item[(c)] If $\iota$ is expressed by the equations $t\circ\iota =t$, $u^j\circ \iota = u^j|_{V\cap(\R\times Q)}$, $v^\kappa\circ \iota = c^\kappa$, where $1\leq j\leq n$, $n+1\leq \kappa\leq m$, and $c^\kappa$ are some real numbers, then $\tilde A_j =A_j \circ J^1\iota$, $\tilde B_{jk}= B_{jk}\circ J^1\iota$, $1\leq j,k\leq n$, satisfy
\begin{equation}\label{HspecialT}
\begin{aligned}
& \tilde B_{jk}=\tilde B_{kj},\quad \frac{\pa \tilde B_{jl}}{\pa\dot{q}^k} = \frac{\pa \tilde B_{kl}}{\pa\dot{q}^j},\quad \frac{\pa \tilde A_{j}}{\pa \dot{q}^k} + \frac{\pa \tilde A_{k}}{\pa \dot{q}^j}
-2 \frac{\pa \tilde B_{jk}}{\pa q^l} \dot{q}^l = 0,\\
&\frac{\pa \tilde A_{j}}{\pa {q}^k} - \frac{\pa \tilde A_{k}}{\pa {q}^j}
-\frac{1}{2} \frac{\pa}{\pa q^l}\left( \frac{\pa \tilde A_{j}}{\pa \dot{q}^k} - \frac{\pa \tilde A_{k}}{\pa \dot{q}^j} \right) \dot{q}^l =0.
\end{aligned}
\end{equation}
\end{theorem}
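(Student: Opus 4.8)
The plan is to reduce the whole statement to the specialized Helmholtz conditions (\ref{Hspecial}), now applied to the induced coefficients rather than to $\varepsilon_\sigma$ themselves. Since the induced source form has coefficients $\tilde\varepsilon_j = P_j + Q_{jk}\ddot{q}^k$ that are affine in the second derivatives, and since neither $A_\sigma$, $B_{\sigma\nu}$ nor $f^\sigma$ depend on $t$, the functions $P_j$, $Q_{jk}$ given by (\ref{PQ}) are themselves $t$-independent. Hence, by exactly the same reduction that turns (\ref{2ndH}) into (\ref{Hspecial}), condition (a) --- local variationality of $J^2\iota{}^*\varepsilon$ --- is equivalent to
\begin{equation*}
\begin{aligned}
& Q_{jk}=Q_{kj},\quad \frac{\partial Q_{jl}}{\partial\dot{q}^k} = \frac{\partial Q_{kl}}{\partial\dot{q}^j},\quad \frac{\partial P_j}{\partial\dot{q}^k} + \frac{\partial P_k}{\partial\dot{q}^j} - 2\frac{\partial Q_{jk}}{\partial q^l}\dot{q}^l = 0,\\
& \frac{\partial P_j}{\partial q^k} - \frac{\partial P_k}{\partial q^j} - \frac{1}{2}\frac{\partial}{\partial q^l}\left(\frac{\partial P_j}{\partial\dot{q}^k} - \frac{\partial P_k}{\partial\dot{q}^j}\right)\dot{q}^l = 0.
\end{aligned}
\end{equation*}

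First I would prove the equivalence of (a) and (b) by substituting the explicit formulas (\ref{PQ}) for $P_j$ and $Q_{jk}$ into this system and carrying out the differentiations. The key point is the chain rule for the composite coefficients: since $\tilde A_\sigma = A_\sigma\circ J^1\iota$ and $\tilde B_{\sigma\nu} = B_{\sigma\nu}\circ J^1\iota$ depend on $q^i$, $\dot{q}^i$ only through $f^\sigma(q^i)$ and $(\partial f^\sigma/\partial q^i)\dot{q}^i$, their partial derivatives in $q^i$ and $\dot{q}^i$ generate the factors $\partial f^\sigma/\partial q^i$ together with their first and second derivatives. The symmetry condition $Q_{jk}=Q_{kj}$ gives at once the first equation of (b); the relation $\partial Q_{jl}/\partial\dot{q}^k = \partial Q_{kl}/\partial\dot{q}^j$ gives the second; the third and fourth Helmholtz identities yield the remaining displayed equations of (b) after collecting terms according to the symmetric and antisymmetric combinations of the indices $i,j$ and $k,l$.

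The equivalence of (a) and (c) then follows by specializing to an adapted rectangular chart in the sense of Remark \ref{adapted}, in which $\iota$ is given by $u^j\circ\iota = u^j|_{V\cap(\R\times Q)}$ and $v^\kappa\circ\iota = c^\kappa$. In such a chart one has $\partial f^\sigma/\partial q^i = \delta^\sigma_i$ for $1\leq\sigma,i\leq n$ and zero otherwise, while all second derivatives $\partial^2 f^\nu/\partial q^i\partial q^k$ vanish identically. Inserting these into (\ref{PQ}) gives $Q_{jk} = \tilde B_{jk}$ and $P_j = \tilde A_j$, so that the system above collapses directly to (\ref{HspecialT}); equivalently, every term of (b) that contains a second derivative of $f$ drops out and the surviving first-derivative factors become Kronecker deltas.

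The main obstacle is the explicit computation in the step (a) $\Leftrightarrow$ (b): one must distribute $\partial/\partial q^i$ and $\partial/\partial\dot{q}^i$ through the composite factors $\tilde A_\sigma$, $\tilde B_{\sigma\nu}$ and through their products with the first and second derivatives of $f^\sigma$, and then organize the numerous resulting terms into the compact antisymmetrized form displayed in (b). This is a routine but lengthy calculation requiring careful bookkeeping of the chain rule for functions composed with $J^1\iota$ and $J^2\iota$; by contrast, the passage to (c) is immediate once the adapted-chart reductions $\partial f^\sigma/\partial q^i = \delta^\sigma_i$ and $\partial^2 f^\nu/\partial q^i\partial q^k = 0$ are substituted.
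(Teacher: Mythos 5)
Your proposal is correct and follows essentially the same route as the paper, whose proof consists precisely of applying the specialized Helmholtz conditions (\ref{Hspecial}) to the induced coefficients $P_j$, $Q_{jk}$ of (\ref{PQ}); your added observations --- that $t$-independence of $A_\sigma$, $B_{\sigma\nu}$, $f^\sigma$ kills the $\partial/\partial t$ terms, and that in an adapted rectangular chart $\partial f^\sigma/\partial q^i=\delta^\sigma_i$ with vanishing second derivatives so that $P_j=\tilde A_j$ and $Q_{jk}=\tilde B_{jk}$, collapsing the system to (\ref{HspecialT}) --- are exactly the bookkeeping the paper leaves implicit. Nothing further is needed.
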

\begin{proof}
The proof follows from a direct application of the Helmholtz variationality conditions (\ref{Hspecial}), applied to coefficients (\ref{PQ}) of $J^2\iota{}^*\varepsilon$.
\end{proof}
\begin{corollary}
\label{th1dim}
Let $Q$ be a one-dimensional embedded submanifold of $\R^m$. Then the induced source form $J^2\iota{}^*\varepsilon$ on an open subset of $J^2(\R\times Q)$ is locally variational if and only if the functions $A_\sigma$, $B_{\sigma\nu}$, and $f^\sigma$ satisfy
\begin{equation}\label{2-Helm}
\begin{aligned}
&\left(\frac{\pa A_\sigma}{\pa \dot{x}^\nu}\right)_{\circ J^1\iota} \frac{\pa f^\nu}{\pa q} \frac{\pa f^\sigma}{\pa q}
+ \left(\frac{\pa B_{\sigma\kappa}}{\pa \dot{x}^\nu} - \frac{\pa B_{\sigma\nu}}{\pa \dot{x}^\kappa} \right)_{\circ J^1\iota} 
 \dot{q} \frac{\pa f^\nu}{\pa q} \frac{\pa f^\sigma}{\pa q}
 \frac{d}{dt} \frac{\pa f^\kappa}{\pa q} \\
 &+ \left(B_{\sigma\nu} - B_{\nu\sigma} \right)_{\circ J^1\iota} 
\frac{\pa f^\sigma}{\pa q} \frac{d}{dt} \frac{\pa f^\nu}{\pa q}
 - \left( \frac{\pa B_{\sigma\nu}}{\pa x^\kappa} \right)_{\circ J^1\iota}
 \frac{d f^\kappa}{dt}  \frac{\pa f^\nu}{\pa q} \frac{\pa f^\sigma}{\pa q} = 0.
\end{aligned}
\end{equation}
\end{corollary}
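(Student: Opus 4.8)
The plan is to obtain the corollary as the one-dimensional specialization ($n=1$) of Theorem \ref{thind}, most transparently by applying the reduced Helmholtz conditions (\ref{2ndH2}) directly to the induced coefficients (\ref{PQ}). When $\dim Q = 1$ there is a single fibre coordinate $q$, so all of the indices $i,j,k,l$ collapse to one value and the induced source form reads $J^2\iota{}^*\varepsilon = \tilde\varepsilon\,\eta\wedge dt$ with $\tilde\varepsilon = P + Q\ddot q$, where $P$ and $Q$ are the functions of $(q,\dot q)$ given by (\ref{PQ}). First I would check that the Helmholtz system (\ref{2ndH2}) for a single unknown degenerates: the first and third equations are antisymmetric in the two indices and hence reduce to the trivial identity $0=0$, so that only the second equation survives. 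Since $A_\sigma$, $B_{\sigma\nu}$, and therefore $P$, $Q$, do not depend on $t$, a short computation ($\partial\tilde\varepsilon/\partial\ddot q = Q$, $\partial\tilde\varepsilon/\partial\dot q = \partial P/\partial\dot q + (\partial Q/\partial\dot q)\ddot q$, and $\tfrac{d}{dt}Q = (\partial Q/\partial q)\dot q + (\partial Q/\partial\dot q)\ddot q$) collapses it to the single scalar condition
\begin{equation*}
\frac{\pa P}{\pa\dot q} - \frac{\pa Q}{\pa q}\dot q = 0.
\end{equation*}

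Next I would substitute (\ref{PQ}) and expand by the product rule, then rewrite the $\tilde A$- and $\tilde B$-derivatives along $J^1\iota$ using $x^\sigma\circ J^1\iota = f^\sigma$ and $\dot x^\sigma\circ J^1\iota = (\pa f^\sigma/\pa q)\dot q$. The chain rule gives
\begin{equation*}
\frac{\pa\tilde A_\sigma}{\pa\dot q} = \left(\frac{\pa A_\sigma}{\pa\dot x^\nu}\right)_{\circ J^1\iota}\frac{\pa f^\nu}{\pa q},\qquad
\frac{\pa\tilde B_{\sigma\nu}}{\pa\dot q} = \left(\frac{\pa B_{\sigma\nu}}{\pa\dot x^\kappa}\right)_{\circ J^1\iota}\frac{\pa f^\kappa}{\pa q},
\end{equation*}
and
\begin{equation*}
\frac{\pa\tilde B_{\sigma\nu}}{\pa q} = \left(\frac{\pa B_{\sigma\nu}}{\pa x^\kappa}\right)_{\circ J^1\iota}\frac{\pa f^\kappa}{\pa q} + \left(\frac{\pa B_{\sigma\nu}}{\pa\dot x^\kappa}\right)_{\circ J^1\iota}\frac{\pa^2 f^\kappa}{\pa q^2}\dot q.
\end{equation*}
The $\tilde A$-term immediately reproduces the first summand of (\ref{2-Helm}), and the $x$-derivative piece of $\pa\tilde B_{\sigma\nu}/\pa q$ supplies the last summand once one recognizes $(\pa f^\kappa/\pa q)\dot q = df^\kappa/dt$.

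Finally I would collect the remaining terms. Combining the $\dot x$-derivative piece of $\pa\tilde B_{\sigma\nu}/\pa q$ with the $\pa\tilde B_{\sigma\nu}/\pa\dot q$ contribution and relabelling the summation indices $\nu\leftrightarrow\kappa$ produces the antisymmetrized factor $\pa B_{\sigma\kappa}/\pa\dot x^\nu - \pa B_{\sigma\nu}/\pa\dot x^\kappa$, i.e. the second summand of (\ref{2-Helm}), after using $\tfrac{d}{dt}(\pa f^\kappa/\pa q) = (\pa^2 f^\kappa/\pa q^2)\dot q$. The only genuinely delicate bookkeeping — and the step I expect to be the main obstacle — lies in the terms where $\tilde B_{\sigma\nu}$ multiplies second derivatives of $f$: the product rule generates, via $\partial P/\partial\dot q$ and $-(\partial Q/\partial q)\dot q$, a group of such terms of which two cancel identically, while the surviving two, after the relabelling $\sigma\leftrightarrow\nu$, assemble into $(B_{\sigma\nu}-B_{\nu\sigma})_{\circ J^1\iota}(\pa f^\sigma/\pa q)\tfrac{d}{dt}(\pa f^\nu/\pa q)$, the third summand of (\ref{2-Helm}). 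Keeping track of which of these cancel and of the symmetrization/antisymmetrization under $\sigma\leftrightarrow\nu$ is where care is required; everything else is the chain rule, and assembling the four summands completes the equivalence.
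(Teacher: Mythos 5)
Your proposal is correct and follows essentially the same route as the paper, which states the corollary as the immediate $n=1$ specialization of Theorem \ref{thind}, i.e.\ the direct application of the Helmholtz conditions (\ref{Hspecial}) to the induced coefficients (\ref{PQ}): for a single coordinate $q$ the antisymmetric families of conditions are vacuous, only $\frac{\pa P}{\pa\dot q}-\frac{\pa Q}{\pa q}\dot q=0$ survives (using $t$-independence of $A_\sigma$, $B_{\sigma\nu}$), and your chain-rule expansion with the relabellings $\nu\leftrightarrow\kappa$ and $\sigma\leftrightarrow\nu$, together with $\frac{d}{dt}\frac{\pa f^\kappa}{\pa q}=\frac{\pa^2 f^\kappa}{\pa q^2}\dot q$ and $\frac{df^\kappa}{dt}=\frac{\pa f^\kappa}{\pa q}\dot q$, reproduces all four summands of (\ref{2-Helm}) exactly, including the cancellation pattern among the $\tilde B_{\sigma\nu}$-times-second-derivative terms that you flag.
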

\begin{remark}\label{rem6}
We point out that the equivalent conditions of Theorem \ref{thind} restrict not only the structure of the initial source form $\varepsilon$, but also a submanifold $Q$ of $\R^m$ to be found. In particular, we observe that (\ref{2-Helm}) is satisfied identically for systems $\varepsilon_\sigma$ such that (i) $A_\sigma=A_\sigma(x^\kappa)$, (ii) $B_{\sigma\nu}=B_{\sigma\nu}(\dot{x}^\kappa)$, and
\begin{equation}\label{nutna}
B_{\sigma\nu} = B_{\nu\sigma},\quad \frac{\pa B_{\sigma\nu}}{\pa \dot{x}^\kappa} = \frac{\pa B_{\sigma\kappa}}{\pa \dot{x}^\nu}.
\end{equation}
In other words, second-order systems of ordinary equations linear in second derivatives which satisfy conditions (i), (ii), and (\ref{nutna}) are variational when contracted to arbitrary \textit{one-dimensional} submanifold of the Euclidean space $\R^m$. Conditions (\ref{nutna}) belong to the set of the Helmholtz conditions (\ref{Hspecial}) for $\varepsilon_\sigma$, nevertheless (\ref{nutna}) together with (i) and (ii) do \textit{not} imply (local) variationality of the system $\varepsilon_\sigma$ (\ref{Ex}).
\end{remark}
\begin{remark}[Forces]
Important particular case of (\ref{Ex}) are systems solved with respect to the second derivatives,
\begin{equation}\label{solved}
\ddot{x}^\sigma = F^\sigma,
\end{equation}
where $1\leq \sigma,\nu \leq m$, and $F^\sigma= F^\sigma (x^\nu,\dot{x}^\nu)$ are given functions (called \textit{mechanical forces}). To rewrite  (\ref{solved}) into the form $ \varepsilon_\sigma =0$, we put $B_{\sigma\nu} = \delta_{\sigma\nu}$ (the Kronecker symbol), and $A_\sigma = -\delta_{\sigma\nu} F^\nu$. From Corollary \ref{th1dim} we obtain a necessary and sufficient condition for a one-dimensional submanifold $Q$ of $\R^m$ to be a variational submanifold for (\ref{solved}),
\begin{equation*}
\sum_{\sigma} \left( \frac{\pa F^\sigma}{\pa \dot{x}^\nu} \right)_{\circ J^1\iota} 
\frac{\pa f^\sigma}{\pa q} \frac{\pa f^\nu}{\pa q}  = 0.
\end{equation*}
In particular, if the forces $F^\sigma$ do \textit{not} depend on velocities $\dot{x}^\nu$, then clearly any one-dimensional submanifold is variational for (\ref{solved}).
\end{remark}
\section{Variational submanifolds: Examples}
\label{sec:5}
We illustrate the general theory on simple examples of \textit{holonomic} constraints in mechanics. Consider the Euclidean space $\R\times\R^3$ endowed with the canonical coordinates $(t,x,y,z)$, and the associated fibered coordinates on the jet prolongations $J^1(\R\times\R^3)$,  $J^2(\R\times\R^3)$, denoted by $(t,x,y,z,\dot x,\dot y,\dot z)$ and $(t,x,y,z,\dot x,\dot y,\dot z,\ddot x,\ddot y,\ddot z)$, respectively.

In Examples 1 and 2 we consider two topologically \textit{non}-equivalent, embedded submanifolds of $\R^3$, the sphere $S^2$ and the M\"obius strip $M_{r,a}$. With the help of the variational sequence theory (Sec. \ref{sec:3}) the concepts of local and global variationality of the constrained systems of equations are studied. Then in Examples 3--8 we study \textit{one}- and \textit{two}-dimensional \textit{variational} submanifolds of Euclidean spaces for concrete systems of second-order differential equations.
\begin{example}[Induced variational equations on $2$-sphere]
\label{exam1}
Denote by $S^2_{r_0}$ (resp. $S^2$) the two-dimensional sphere in $\R^3$ of radius $r_0>0$ (resp. unit sphere) with centre in the origin. In the standard sense, the open submanifold $\R\times(\R^3 \setminus \{ (0,0,0)\})$ of $\R^4$ is endowed with a smooth atlas, which consists of two \textit{spherical} charts adapted to the embedded submanifold $\R\times S^2_{r_0}$ for every $r_0$. The equations
$t=t,\ x= r\cos\varphi\sin\vartheta,\ y= r\sin\varphi\sin\vartheta,\ z= r\cos\vartheta,
$
define a diffeomorphism $U\ni (t, r, \varphi,\vartheta)\to (t, x(r, \varphi,\vartheta),y(r, \varphi,\vartheta),z(r, \varphi,\vartheta))\in V$, where for instance $U=\{(t, r, \varphi,\vartheta)\in\R^3\,|\, t\in\R,\ r>0,\  0<\varphi < 2\pi,\ 0<\vartheta < \pi \}$ maps onto an open subset $V=\R\times\R^3\setminus \{(x,y,z)\in\R^3\, |\, x\geq 0, y=0 \}$ of $\R^4$. The inverse transformation $V\ni (t, x,y,z)\to (t, r, \varphi, \vartheta)\in U$, is a diffeomorphism expressed as
\begin{equation*}
\begin{aligned}
& t=t,\quad r=\sqrt{x^2+y^2+z^2},\quad  \vartheta=\arccos{\frac{z}{\sqrt{x^2+y^2+z^2}}} \\
& \varphi=
\begin{cases}
\arccos\frac{x}{\sqrt{x^2+y^2}},
 & y>0,
 \\
2\pi-\arccos\frac{x}{\sqrt{x^2+y^2}},
 & y<0,
\\
 \pi,
 & y=0,\ x<0.
\end{cases}
\end{aligned}
\end{equation*}
$(V,\Psi)$, $\Psi = (t, r, \varphi,\vartheta)$, is a chart on $\R\times (\R^3\setminus\{ (0,0,0)\}$, \textit{adapted} to $\R\times S^2_{r_0}$. We complete $(V,\Psi)$ by a rotated chart $(\bar V,\bar\Psi)$, where $\bar V = \R\times (\R^3\setminus\{ (x,y,z)\in\R^3\,|\, x\leq 0,\ z=0 \})$, $\bar\Psi = (\bar t, \bar r, \bar\varphi,\bar\vartheta)$, in order to obtain an atlas on $\R\times (\R^3\setminus\{ (0,0,0)\}$. The coordinate transformation
$\Psi\circ \bar{\Psi}^{-1}: \bar{\Psi} (V\cap\bar{V}) \to \Psi (V\cap\bar{V})$, is expressed by
\begin{equation*}
t=\bar{t},\quad r=\bar{r},\quad \cos\varphi = \frac{-\cos\bar{\varphi}\sin\bar\vartheta}{\sqrt{1-\sin^2\bar\varphi\sin^2\bar\vartheta}},\quad
\cos\vartheta = -\sin\bar\varphi\sin\bar\vartheta.
\end{equation*}

Consider the canonical embedding
$
\iota : \R\times S^2 \to \R\times\R^3
$
over the identity of the real line $\R$. Let $\varepsilon$ be a source form on $J^2(\R\times\R^3)$, 
$
\varepsilon = \varepsilon_x \omega^x\wedge dt + \varepsilon_y \omega^y\wedge dt + \varepsilon_z \omega^z\wedge dt,
$
where $\omega^x = dx - \dot{x}dt$, $\omega^y = dy - \dot{y}dt$, $\omega^z = dz - \dot{z}dt$. We discuss variationality of $J^2\iota{}^*\varepsilon$.

(i) If $\varepsilon$ is locally variational, then by Corollary \ref{corglobal} it is also globally variational since $H^2_{\mathrm{deR}}(\R\times\R^3)$ is trivial. Thus, $\varepsilon$ coincides with the Euler--Lagrange form $E_\lambda$, associated with a Lagrangian $\lambda = \mathscr{L} dt$, where $\mathscr{L}:J^1(\R\times\R^3)\to \R$ is a globally defined function. By Corollary \ref{cor:Lagrangian}, the induced source form $J^2\iota{}^*\varepsilon$ is \textit{globally} variational and coincides with the Euler--Lagrange form, associated with the Lagrangian $J^2\iota{}^*\lambda$. From Theorem \ref{th7}, (a), we get an expression of the constrained Euler--Lagrange form with respect to $(V^1, \Psi^1)$,
\begin{equation*}
\begin{aligned}
& J^2\iota{}^*\varepsilon =J^2\iota{}^* E_\lambda\\
&=\left( -(E_x(\mathscr{L})\circ J^2\iota)\sin\varphi\sin\vartheta + (E_y(\mathscr{L})\circ J^2\iota)\cos\varphi\sin\vartheta \right) \eta^\varphi \wedge dt\\
&+ 
\left( (E_x(\mathscr{L})\circ J^2\iota)\cos\varphi\cos\vartheta + (E_y(\mathscr{L})\circ J^2\iota)\sin\varphi\cos\vartheta \right.\\
&\left. - (E_z(\mathscr{L})\circ J^2\iota)\sin\vartheta \right) \eta^\vartheta \wedge dt,
\end{aligned}
\end{equation*}
where $\eta^\varphi = d\varphi -\dot\varphi dt$, $\eta^\vartheta = d\vartheta -\dot\vartheta dt$, and
\begin{equation*}
\begin{aligned}
E_x(\mathscr{L}) &= \frac{\pa \mathscr{L}}{\pa x} - \frac{d}{dt}\frac{\pa\mathscr{L}}{\pa\dot{x}},\quad
E_y(\mathscr{L}) = \frac{\pa \mathscr{L}}{\pa y} - \frac{d}{dt}\frac{\pa\mathscr{L}}{\pa\dot{y}},\\
E_z(\mathscr{L}) &= \frac{\pa \mathscr{L}}{\pa z} - \frac{d}{dt}\frac{\pa\mathscr{L}}{\pa\dot{z}}.
\end{aligned}
\end{equation*}

(ii) Suppose $\varepsilon$ is \textit{not} variational. If $\varepsilon= \varepsilon_\varphi \omega^\varphi \wedge dt + \varepsilon_\vartheta\omega^\vartheta \wedge dt + \varepsilon_r\omega^r \wedge dt$ in $(V^1, \Psi^1)$, we get necessary and sufficient conditions (Theorem \ref{thHind}) for local variationality of $J^2\iota{}^*\varepsilon$ as
\begin{equation}\label{spherevar}
\begin{aligned}
&\left(\frac{\pa\varepsilon_\varphi}{\pa\ddot{\vartheta}} - \frac{\pa\varepsilon_\vartheta}{\pa\ddot{\varphi}}\right)_{\circ J^2\iota} = 0,\\
&\left(\frac{\pa\varepsilon_\varphi}{\pa\dot{\varphi}} - \frac{d}{dt} \frac{\pa\varepsilon_\varphi}{\pa\ddot{\varphi}}\right)_{\circ J^2\iota} =  0,\quad 
\left(\frac{\pa\varepsilon_\vartheta}{\pa\dot{\vartheta}} - \frac{d}{dt} \frac{\pa\varepsilon_\vartheta}{\pa\ddot{\vartheta}}\right)_{\circ J^2\iota} =  0,\\
& \left(\frac{\pa\varepsilon_\varphi}{\pa\dot{\vartheta}} + \frac{\pa\varepsilon_\vartheta}{\pa\dot{\varphi}} 
 - \frac{d}{dt} \left( \frac{\pa\varepsilon_\varphi}{\pa\ddot{\vartheta}}+\frac{\pa\varepsilon_\vartheta}{\pa\ddot{\varphi}}\right)\right)_{\circ J^2\iota} =  0,\\
& \left(\frac{\pa\varepsilon_\varphi}{\pa{\vartheta}} - \frac{\pa\varepsilon_\vartheta}{\pa{\varphi}} 
 - \frac{1}{2}\frac{d}{dt} \left( \frac{\pa\varepsilon_\varphi}{\pa\dot{\vartheta}}-\frac{\pa\varepsilon_\vartheta}{\pa\dot{\varphi}}\right)\right)_{\circ J^2\iota} =  0.
\end{aligned}
\end{equation}
Since the second de Rham cohomology group of $S^2$ is \textit{non}-trivial,  $H^2_{\mathrm{deR}}(\R\times S^2) = \R$, we observe by Corollary \ref{corglobal} that the local variationality conditions (\ref{spherevar}) do \textit{not} assure global variationality of $J^2\iota{}^*\varepsilon$. Apparently, one can easily find other examples of embedded submanifolds in Euclidean spaces possessing topological obstructions for global variationality.
\end{example}
\begin{example}[Induced variational equations on M\"obius strip]
\label{exam2}
Let $M_{r,a}$ denotes the M\"obius strip in $\R^3$ (\textit{without} boundary) of radius $r$ and wideness $2a$, where $0<a<r$. We introduce a smooth atlas on open subset $\R\times (\R^3\setminus \{ (0,0,z)\,|\, z\in\R \})$ of $\R^4$, consisting of adapted charts to embedded submanifold $\R\times M_{r,a}$ as follows. The equations $t=t$, and
\begin{equation*}
\begin{aligned}
x&=r\cos\varphi + \tau\cos\frac{\varphi}{2}\cos\varphi -\kappa\sin\frac{\varphi}{2}\cos\varphi,\\
y&=r\sin\varphi + \tau\cos\frac{\varphi}{2}\sin\varphi -\kappa\sin\frac{\varphi}{2}\sin\varphi,\\
z&= \tau\sin\frac{\varphi}{2} + \kappa\cos\frac{\varphi}{2},\quad
0<\varphi < 2\pi,\ \  -\infty < \tau < +\infty,
\\
\end{aligned}
\end{equation*}
where 
\begin{equation}\label{plocha}
-\infty < \kappa < \frac{\cos(\varphi/2)}{\sin(\varphi/2)}\tau +\frac{r}{\sin(\varphi/2)},
\end{equation}
define a diffeomorphism
\begin{equation*}
U\ni (t,\varphi,\tau,\kappa)\to (t, x(\varphi,\tau,\kappa), y(\varphi,\tau,\kappa), z(\varphi,\tau,\kappa))\in V,
\end{equation*}
where $V=\R\times (\R^3\setminus \{ [0,+\infty)\times\{0\}\times\R \})$, $U=\R\times U_0$, and $U_0$ is an open subset of $\R^3$, given by condition (\ref{plocha}). We get a chart $(V,\Psi)$, $\Psi=(t,\varphi,\tau,\kappa)$, on $\R\times (\R^3\setminus \{ (0,0,z)\,|\, z\in\R \})$, adapted to submanifold $\R\times M_{r,a}$, which is characterized by $\kappa =0$. Another adapted chart to $\R\times M_{r,a}$, is given by $(\bar V,\bar\Psi)$, $\bar\Psi=(t,\bar\varphi,\bar\tau,\bar\kappa)$, where $\bar V = \R\times ( \R^3\setminus \{ (-\infty, 0]\times \{ 0\}\times \R \})$, and
\begin{equation*}
\begin{aligned}
x&=r\cos\bar\varphi + \bar\tau\cos\frac{\bar\varphi}{2}\cos\bar\varphi -\bar\kappa\sin\frac{\bar\varphi}{2}\cos\bar\varphi,\\
y&=r\sin\bar\varphi + \bar\tau\cos\frac{\bar\varphi}{2}\sin\bar\varphi -\bar\kappa\sin\frac{\bar\varphi}{2}\sin\bar\varphi,\\
z&= \bar\tau\sin\frac{\bar\varphi}{2} + \bar\kappa\cos\frac{\bar\varphi}{2},
\quad -\pi < \bar\varphi <\pi,\   -\infty <\kappa < +\infty,\\
\end{aligned}
\end{equation*}
where
\begin{equation*}
\frac{\sin(\varphi/2)}{\cos(\varphi/2)}\kappa - \frac{r}{\cos(\varphi/2)} < \tau < +\infty.
\end{equation*}
Then $(V,\Psi)$ and $(\bar V,\bar\Psi)$ define an adapted atlas to $\R\times M_{r,a}$, obeying chart transformation 
\begin{align*}
\bar\Psi\circ\Psi^{-1} (t,\varphi,\tau,\kappa) = & \begin{cases}
(t,\varphi,\tau,\kappa), & \varphi\in (0,\pi), \\
(t,\varphi-2\pi,-\tau,-\kappa), & \varphi\in (\pi,2\pi).
\end{cases}
\end{align*}

Consider the canonical embedding $\iota: \R\times M_{r,a} \to \R\times \R^3.$ Let $\varepsilon$ be a source form on $J^2(\R\times\R^3)$, expressed by
\begin{equation*}
\varepsilon = \varepsilon_x \omega^x\wedge dt + \varepsilon_y \omega^y\wedge dt + \varepsilon_z \omega^z\wedge dt.
\end{equation*}
With respect to the associated chart $(V_1,\Psi_1)$, $\Psi_1 = (t,\varphi,\tau)$, on $\R\times M_{r,a}$, we get $J^2\iota{}^*\varepsilon = \tilde\varepsilon_\varphi \eta^\varphi\wedge dt + \tilde\varepsilon_\tau \eta^\tau\wedge dt$, where $\eta^\varphi = d\varphi - \dot{\varphi} dt$, $\eta^\tau = d\tau - \dot{\tau} dt$, and
\begin{equation*}
\begin{aligned}
\tilde\varepsilon_\varphi &= (\varepsilon_x\circ J^2\iota) (-r\sin\varphi + (\tau/2)\sin(\varphi/2) -3\tau \cos^2(\varphi/2)\sin(\varphi/2))\\
&+ (\varepsilon_y\circ J^2\iota) (r\cos\varphi + \tau\cos(\varphi/2) -3\tau \cos(\varphi/2)\sin^2(\varphi/2)) \\
&+ (\varepsilon_z\circ J^2\iota)  (\tau/2)\cos(\varphi/2),\\
\tilde\varepsilon_\tau &=(\varepsilon_x\circ J^2\iota) ( \cos(\varphi/2) -2 \cos(\varphi/2)\sin^2(\varphi/2))\\
&+2 (\varepsilon_y\circ J^2\iota) \sin(\varphi/2)\cos^2(\varphi/2)
+ (\varepsilon_z\circ J^2\iota) \sin(\varphi/2).
\end{aligned}
\end{equation*}
Necessary and sufficient conditions (the Helmholtz conditions) for local variationality of $J^2\iota{}^*\varepsilon$ now read (cf. Theorem \ref{thHind}),
\begin{equation}\label{mobvar}
\begin{aligned}
&\frac{\pa\tilde\varepsilon_\varphi}{\pa\ddot{\tau}} - \frac{\pa\tilde\varepsilon_\tau}{\pa\ddot{\varphi}} = 0,\quad
\frac{\pa\tilde\varepsilon_\varphi}{\pa\dot{\varphi}} - \frac{d}{dt} \frac{\pa\tilde\varepsilon_\varphi}{\pa\ddot{\varphi}} =  0,\quad 
\frac{\pa\tilde\varepsilon_\tau}{\pa\dot{\tau}} - \frac{d}{dt} \frac{\pa\tilde\varepsilon_\tau}{\pa\ddot{\tau}} =  0\\
& \frac{\pa\tilde\varepsilon_\varphi}{\pa\dot{\tau}} + \frac{\pa\tilde\varepsilon_\tau}{\pa\dot{\varphi}} 
 - \frac{d}{dt} \left( \frac{\pa\tilde\varepsilon_\varphi}{\pa\ddot{\tau}}+\frac{\pa\tilde\varepsilon_\tau}{\pa\ddot{\varphi}}\right) =  0,\\
& \frac{\pa\tilde\varepsilon_\varphi}{\pa{\tau}} - \frac{\pa\tilde\varepsilon_\tau}{\pa{\varphi}} 
 - \frac{1}{2}\frac{d}{dt} \left( \frac{\pa\tilde\varepsilon_\varphi}{\pa\dot{\tau}}-\frac{\pa\tilde\varepsilon_\tau}{\pa\dot{\varphi}}\right) =  0.
\end{aligned}
\end{equation}
Moreover, the second de Rham cohomology group of $M_{r,a}$ is trivial, hence $H^2_{\mathrm{deR}}(\R\times M_{r,a})$ is also trivial. By Corrolary \ref{corglobal} it follows that $J^2\iota{}^*\varepsilon$ is automatically \textit{globally variational} provided (\ref{mobvar}) are satisfied.
\end{example}
\begin{example}[One-dimensional submanifolds of $\R^m$ are variational]
\label{exam3}
In Remark \ref{rem6} we described a class of second-order systems of differential equations which possess \textit{any} one-dimensional submanifold of $\R^m$ as a variational submanifold. Another class of equations of this kind is constructed as follows.

Let $Q$ be an arbitrary embedded one-dimensional submanifold of $\R^m$. We search for a source form $\varepsilon$ on $J^2(\R\times\R^m)$ such that $J^2\iota{}^*\varepsilon$ is (locally) variational. Let $(V,\psi)$, $\psi=(u, v^\alpha)$, $2\leq \alpha \leq m$, be a chart on $\R^m$ adapted to $Q$, and the canonical embedding $\iota:\R\times Q \to \R\times \R^m$ over $\id_\R$, has the equations $t\circ\iota = t$, $u\circ\iota = u|_{V\cap Q}$, $v^\alpha\circ\iota = 0$. We put $\varepsilon=\varepsilon_0 \eta\wedge dt +\varepsilon_\alpha \omega^\alpha \wedge dt$, where $\eta = du - \dot{u}dt$, $\omega^\alpha = dv^\alpha -\dot{v}^\alpha dt $, $2\leq\alpha \leq m$, for arbitrary functions $\varepsilon_\alpha$ on $J^2(\R\times\R^m)$ and for $\varepsilon_0 = A + \dot{u}^2 \ddot{u}$, where $A:V^1\to\R$ is such that $A\circ J^1\iota$ depends on coordinate $u$ only. Clearly, $\varepsilon$ may be chosen \textit{non}-variational. The induced source form has an expression 
$
J^2\iota{}^*\varepsilon = \left( A\circ J^2\iota + \dot{u}^2\ddot{u} \right) \eta|_{V\cap Q}\wedge dt 
$,
and one can easily see that $J^2\iota{}^*\varepsilon$ is locally variational since the Helmholtz condition (\ref{2-Helm}) is satisfied identically.
\end{example}
\begin{example}[Variational submanifolds: Circles $S^1_r$ in $\R^2$]
\label{exam4}
We give a second-order system, which is \textit{not} variational but every circle $S^1_r$ in $\R^2$ is its variational submanifold. Let $\varepsilon$ be a source form on $J^2(\R\times\R^2)$, $\varepsilon = \varepsilon_\sigma \omega^\sigma \wedge dt$, defined by functions 
\begin{equation}\label{priklad1}
\varepsilon_1 = \dot{x}^2 + \dot{x}^1\left(\dot{x}^1\ddot{x}^1 + \dot{x}^2\ddot{x}^2 \right),\quad
\varepsilon_2 = - \dot{x}^1 + \dot{x}^2 \left( \dot{x}^1\ddot{x}^1 + \dot{x}^2\ddot{x}^2 \right).
\end{equation}
In the expression (\ref{Ex}) we have $A_1=\dot{x}^2$, $A_2=- \dot{x}^1$, and $B_{\sigma\nu} = \dot{x}^\sigma \dot{x}^\nu $, $\sigma,\nu = 1,2$. It is easy to check that (\ref{priklad1}) is \textit{non}-variational since the second  condition of (\ref{2ndH}) is not satisfied. Indeed,
\begin{equation*}
\frac{\pa\varepsilon_\sigma}{\pa\dot{x}^\sigma} -\frac{d}{dt}\frac{\pa\varepsilon_\sigma}{\pa\ddot{x}^\sigma} =
 \sum_{\nu} \dot{x}^\nu\ddot{x}^\nu + \dot{x}^\sigma \ddot{x}^\sigma
 - \frac{d}{dt} \left( \dot{x}^\sigma \right)^2
 = \sum_{\nu\neq\sigma} \dot{x}^\nu\ddot{x}^\nu;
\end{equation*}
remark also that the second condition of (\ref{nutna}) is also not satisfied. 
Consider the \textit{polar} coordinates $(r,\varphi)$ on $(0,+\infty)\times (0,2\pi)$, adapted to circles $S^1_r$ in $\R^2$. The mapping $U \ni (t,r,\varphi)\to (t,x^1,x^2)\in V$, given by $t=t$, $x^1=r\cos\varphi$, $x^2=r\sin\varphi$,
where 
\begin{equation*}
\begin{aligned}
U= \R\times (0,+\infty)\times (0,2\pi),\quad
V= \R\times\left(\R^2\setminus\{ (x,y)\,|\, x\geq 0,y=0 \}\right),
\end{aligned}
\end{equation*}
defines a coordinate transformation between the Cartesian and the polar charts, with respect to which the embedding $\iota:\R\times S^1_{r} \to \R\times \R^2$ has an expression $t\circ\iota =t$, $x^1\circ\iota = f^1(\varphi)= r\cos\varphi$, $x^2\circ\iota =f^2(\varphi)=r \sin\varphi$. One can easily verify that $J^2\iota{}^*\varepsilon$ on $J^2(\R\times S^1_{r_0})$ is locally variational. Indeed, we get $J^2\iota{}^*\varepsilon = \tilde\varepsilon \eta\wedge dt$, where $\eta = d\varphi -\dot\varphi dt$, and $\tilde\varepsilon = r^4 \dot{\varphi}^2\ddot{\varphi}$.
The Helmholtz expression for $\tilde\varepsilon$ reads,
\begin{equation*}
\frac{\pa\tilde\varepsilon}{\pa \dot{\varphi}} 
- \frac{d}{dt} \frac{\pa\tilde\varepsilon}{\pa \ddot{\varphi}} 
= 2r^4\dot{\varphi}\ddot{\varphi} - r^4\frac{d}{dt}\left( \dot{\varphi}^2 \right) =0.
\end{equation*} 
The system defined by functions (\ref{priklad1}) belongs to class of mechanical systems with \textit{generalised potential force} (cf. Dreizler and L\"{u}dde \cite{Dreizler}).
\end{example}
\begin{example}[Dumped oscillator has no variational submanifold]
\label{exam5}
Consider the equations of motion of the \textit{two-dimensional free oscillator} in $\R\times\R^2$ (cf. Landau and Lifshitz \cite{Landau}),
\begin{equation}\label{2osc}
\begin{aligned}
 m_{11}\ddot{x} + m_{12}\ddot{y} + k_{11}x + k_{12}y = 0,\quad m_{12}\ddot{x} + m_{22}\ddot{y} + k_{12}x + k_{22}y = 0,
\end{aligned}
\end{equation}
where $k_{11},k_{12},k_{22}$ are coefficients of the potential energy, and all the mass coefficients $m_{11},m_{12},m_{22}$ are equal 1. System (\ref{2osc}) is variational and coincides with the Euler--Lagrange equations of Lagrange function $$\mathscr{L} = ( m_{11}\dot{x}^2 + 2 m_{12}\dot{x}\dot{y} + m_{22}\dot{y}^2 - k_{11}x^2 -2 k_{12}xy - k_{22}y^2)/2$$ (kinetic \textit{minus} potential energy). Adding to the left-hand sides of (\ref{2osc}) the derivatives $\pa F /\pa \dot{x}$ and $\pa F /\pa \dot{y}$ of the \textit{dissipative force} $F = ( \alpha_{11}\dot{x}^2 + 2\alpha_{12} \dot{x}\dot{y} + \alpha_{22} \dot{y}^2)/2$, which is a positive definite quadratic form, we get the damped oscillator equations,
\begin{equation}\label{dampedEq}
\varepsilon_1(x,y,\dot{x},\dot{y},\ddot{x},\ddot{y}) = 0,\quad \varepsilon_2(x,y,\dot{x},\dot{y},\ddot{x},\ddot{y}) = 0,
\end{equation}
where $\varepsilon_1 =\ddot{x} +\ddot{y}  + k_{11}x + k_{12}y + \alpha_{11}\dot{x} + \alpha_{12}\dot{y} $, and $\varepsilon_2 =\ddot{x} + \ddot{y} + k_{12}x + k_{22}y + \alpha_{12}\dot{x} + \alpha_{22}\dot{y}$. System (\ref{dampedEq}) is \textit{not} variational. Indeed, the Helmholtz conditions (\ref{2ndH}) for $\varepsilon_1,\varepsilon_2$ imply that $\alpha_{11}=\alpha_{12}=\alpha_{22}=0$, which contradicts our assumption on dissipative coefficients $\alpha_{ij}$ of the positive definite form $F$.

Consider the canonical embedding $\iota:\R\times Q \to \R\times\R^2$, where $Q$ is a \textit{one}-dimensional submanifold of $\R^2$. Denote by $u,v$ the \textit{adapted coordinates} to $Q$, defined on an open subset of $\R^2$, such that $u\circ J^2\iota=u$, $v\circ J^2\iota=0$. The pull-back $J^2\iota{}^*\varepsilon$ of $\varepsilon$, where $\varepsilon = \varepsilon_1 \omega^1\wedge dt + \varepsilon_2\omega^2\wedge dt$, $\omega^1=dx - \dot{x}dt$, $\omega^2=dy - \dot{y}dt$, has a chart expression $J^2\iota{}^*\varepsilon = \tilde\varepsilon \eta\wedge dt$, where $\eta = du - \dot{u} dt$, and
\begin{equation*}
\begin{aligned}
\tilde\varepsilon &=  \left.\left( \left(  \frac{\pa x}{\pa u} \right)^2 +  2\frac{\pa x}{\pa u} \frac{\pa y}{\pa u} + \left(  \frac{\pa y}{\pa u} \right)^2 \right)\right|_{v=1} \ddot{u}\\
  &+  \left. \left(  \frac{\pa^2 x}{\pa u^2}\frac{\pa x}{\pa u} + \frac{\pa^2 y}{\pa u^2} \frac{\pa x}{\pa u} + \frac{\pa^2 x}{\pa u^2}\frac{\pa y}{\pa u} + \frac{\pa^2 y}{\pa u^2}\frac{\pa y}{\pa u}  \right) \right|_{v=1}  \dot{u}^2   \\
  &+\left. \left(  \alpha_{11}\left( \frac{\pa x}{\pa u} \right)^2 + 2\alpha_{12}\frac{\pa x}{\pa u} \frac{\pa y}{\pa u} + \alpha_{22}\left( \frac{\pa y}{\pa u}\right)^2 \right)\right|_{v=1} \dot{u}\\
  &+\left( k_{11} x (u,v) + k_{12}y(u,v) \right) \left. \frac{\pa x}{\pa u} \right|_{v=1}  
  + \left( k_{12}x (u,v) + k_{22}y(u,v)  \right) \left. \frac{\pa y}{\pa u} \right|_{v=1}.
\end{aligned}
\end{equation*}
$J^2\iota{}^*\varepsilon$ is locally variational if and only if
\begin{equation}\label{quadrat}
\begin{aligned}
\frac{\pa\tilde\varepsilon}{\pa \dot{u}} 
&- \frac{d}{dt} \frac{\pa\tilde\varepsilon}{\pa \ddot{u}}\\
&= \alpha_{11}\left.\left( \frac{\pa x}{\pa u} \right)^2\right|_{v=1} + 2\alpha_{12}\left.\frac{\pa x}{\pa u}\right|_{v=1} \left.\frac{\pa y}{\pa u}\right|_{v=1} + \alpha_{22}\left.\left( \frac{\pa y}{\pa u}\right)^2\right|_{v=1}
\end{aligned}
\end{equation}
vanishes (cf. (\ref{2ndH2}), Theorem \ref{thHind}). This is, however, not possible as (\ref{quadrat}) is a positive definite quadratic form which does \textit{not} vanish for any $Q$. Analogously, we observe that there is \textit{no} \textit{two}-dimensional variational submanifold of $\R^3$ for the three-dimensional damped oscillator.
\end{example}
\begin{example}[Two-dimensional submanifolds of $\R^m$ are variational]
\label{exam6}
We generalize Example \ref{exam3} to \textit{two}-dimensional submanifolds of $\R^m$, $m\geq 3$. Let $Q$ be an embedded two-dimensional submanifold of $\R^m$, and $(V,\psi)$, $\psi=(u,v,w^\alpha)$, $3\leq\alpha\leq m$, be a chart on $\R^m$ adapted to $Q$. The canonical embedding $\iota:\R\times Q\to \R\times\R^m$ has the equations $t\circ \iota=t$, $u\circ\iota=u|_{V\cap Q}$, $v\circ\iota=v|_{V\cap Q}$, $w^\alpha\circ\iota=0$, $3\leq\alpha\leq m$. Let  $\varepsilon= \varepsilon_\sigma\omega^\sigma\wedge dt$ on $J^2(\R\times\R^m)$ is defined by
\begin{equation}\label{exdim2nonvar}
\varepsilon_\sigma=B_{\sigma1}\ddot u+B_{\sigma2}\ddot v+B_{\sigma\alpha}\ddot{w}^\alpha,
\end{equation}
where
\begin{equation*}
\begin{aligned}
&B_{11}=\dot{v}^2/2,
&&B_{12}=B_{21}=\dot{u}\dot{v},
&&B_{1\alpha}=B_{\alpha 1}=\dot{u}\dot{w}^\alpha,
\\
&B_{22}=\dot{u}^2/2,
&&B_{2\alpha}=B_{\alpha 2}=\dot{v}\dot{w}^\alpha,
&&B_{\beta\alpha}=B_{\alpha\beta}=\dot{w}^\alpha\dot{w}^\beta/2,
\end{aligned}
\end{equation*}
$1\leq \sigma\leq m$, $3\leq \alpha,\beta\leq m$.
It is easy to check that (\ref{exdim2nonvar}) is \textit{not} variational (cf. Theorem \ref{thind}).
However, the induced source $J^2\iota^\ast \varepsilon = \tilde\varepsilon_1\eta^1 \wedge dt+\tilde\varepsilon_2\eta^2\wedge dt$, where
\begin{equation*}
\tilde\varepsilon_1
=\frac{1}{2}(\dot v^2 \ddot u + 2\dot u\dot v\ddot v),\quad
\tilde\varepsilon_2= \frac{1}{2}(2\dot u \dot v \ddot u + \dot u^2\ddot v),
\end{equation*}
is locally variational. Moreover, $J^2\iota^\ast \varepsilon$ is the Euler--Lagrange form, associated with the Lagrangian
\begin{equation*}
\lambda=\mathscr{L}(\dot{u},\dot{v}) dt=-\frac{1}{4}\dot{u}^2\dot{v}^2dt.
\end{equation*}
\end{example}
\begin{example}[A variational submanifold: the sphere $S^2$ in $\R^3$]
\label{exam7}
Let us consider a source form $\varepsilon=\varepsilon_x\omega^x\wedge dt + \varepsilon_y\omega^y\wedge dt + \varepsilon_z\omega^z\wedge dt$ on $J^2(\R\times\R^3)$, where $\omega^x=dx-\dot{x}dt$, $\omega^y=dy-\dot{y}dt$, $\omega^z=dz-\dot{z}dt$, such that
\begin{equation}\label{spherecan}
\varepsilon_x=\sqrt{x^2+y^2+z^2}+\ddot{x},\quad \varepsilon_y=\sqrt{x^2+y^2+z^2}+\ddot{y},\quad
\varepsilon_z=\sqrt{x^2+y^2+z^2}+\ddot{z}.
\end{equation}
System (\ref{spherecan}) is \textit{not} variational, and we claim that it has a variational submanifold the unit sphere $S^2$ in $\R^3$. Consider the canonical embedding $\iota:\R\times S^2\to\R\times\R^3$, expressed by $t\circ\iota = t$, $x\circ\iota=\cos \varphi \sin \vartheta$, $y\circ\iota=\sin \varphi \sin \vartheta$, $z\circ\iota=\cos \vartheta$,
with respect to the canonical coordinates $(t,x,y,z)$ on $\R\times\R^3$, and the associated chart $(U,\Phi)$, $\Phi=(t,\varphi,\vartheta)$, on $\R\times S^2$. We get $J^2\iota^\ast\varepsilon=\tilde\varepsilon_\varphi\eta^\varphi\wedge dt+\tilde\varepsilon_\vartheta\eta^\vartheta\wedge dt$,
where $\eta^\varphi=d\varphi-\dot{\varphi}dt$, $\eta^\vartheta=d\vartheta-\dot{\vartheta}dt$, $\tilde\varepsilon_1=P_1+\sin^2 \vartheta\ddot{\varphi}$, $\tilde\varepsilon_2=P_2+\ddot{\vartheta}$, and
\begin{equation*}
\begin{aligned}
&P_1=\sin \vartheta(\cos \varphi -\sin \varphi +2\cos \vartheta \dot{\varphi}\dot{\vartheta}),\\
&P_2=\sin \varphi \cos \vartheta+\cos \varphi \cos \vartheta-\sin \vartheta-\sin \vartheta \cos \vartheta \dot{\varphi}^2.
\end{aligned}
\end{equation*}
A direct calculation shows that $J^2\iota^\ast\varepsilon$ is locally variational. Moreover, $J^2\iota^\ast \varepsilon$ is the Euler--Lagrange form, associated with the Lagrangian $\tilde{\mathscr{L}} dt$, where
\begin{equation*}
\begin{aligned}
\tilde{\mathscr{L}}= -\frac{1}{2}(\sin^2\vartheta \,\dot{\varphi}^2+\dot{\vartheta}^2)+\cos \varphi\sin \vartheta+\sin \varphi\sin \vartheta+\cos \vartheta.
\end{aligned}
\end{equation*}
Note that the Lagrange function $\tilde{\mathscr{L}}$ comes from the mechanical Lagrangian $\mathscr{L} dt$, defined on $J^1(\R\times\R^3)$, where $\mathscr{L}=-(\dot{x}^2+\dot{y}^2+\dot{z}^2)/2+x+y+z$ satisfies $\tilde {\mathscr{L}}=\mathscr{L}\circ J^1\iota$.
\end{example}

\begin{example}[Gyroscopic type system]
\label{exam8}
The equations of motion of the \textit{3-dimensional gyroscopic type} system in the Euclidean space $\R\times\R^3$ read (cf. Landau and Lifshitz \cite{Landau}),
\begin{equation}\label{3gyr}
 \ddot{x} = \alpha \dot{y} + \beta \dot{z},\quad 
 \ddot{y} = - \alpha \dot{x} + \gamma \dot{z},\quad
 \ddot{z} = - \beta \dot{x} - \gamma \dot{y},
\end{equation}
where $\alpha$, $\beta$, and $\gamma$ are arbitrary functions of the positions $x,y,z$. Note the right-hand side of (\ref{3gyr}) belongs to a class of mechanical forces, expressible as a vector product of velocity and position vectors; basic examples are the \textit{Lorentz force}, or the \textit{Coriolis force} (cf. Dreizler and L\"{u}dde \cite{Dreizler}).  Computing the Helmholtz expressions (\ref{2ndH}), we observe that the system,
$ \varepsilon_1 = \ddot{x} - \alpha \dot{y} - \beta \dot{z},\ 
  \varepsilon_2 = \ddot{y} + \alpha \dot{x} - \gamma \dot{z},\ 
   \varepsilon_3 = \ddot{z} + \beta \dot{x} + \gamma \dot{y},$
on $J^2(\R\times\R^3)$ is locally variational if and only if
\begin{equation*}
\frac{\pa\alpha}{\pa z}  -\frac{\pa\beta}{\pa y} + \frac{\pa\gamma}{\pa x} = 0.
\end{equation*}

Let $Q$ be an embedded two-dimensional submanifold of $\R^3$, $\iota:\R\times Q\to\R\times\R^3$ the canonical embedding. Denote by $u,v$, and $w$ the \textit{adapted coordinates} to $Q$, defined on an open subset of $\R^3$, such that $\iota$ has the equations $u\circ\iota = u|_{Q}$, $v\circ\iota = v|_{Q}$, and $w\circ\iota = 0$. Then $J^2\iota{}^*\varepsilon=\tilde\varepsilon_1\eta^1\wedge dt
+\tilde\varepsilon_2\eta^2\wedge dt$, where $\eta^1=du-\dot{u}dt$, $\eta^2=dv-\dot{v}dt$, and 
\begin{equation*}
\tilde \varepsilon_1=P_1+Q_{11}\ddot{u}+Q_{12}\ddot{v},\quad \tilde \varepsilon_2=P_2+Q_{12}\ddot{u}+Q_{22}\ddot{v},
\end{equation*}
where
\begin{equation*}
\begin{aligned}
&
P_1=
\left( {\frac {\partial f^x}{\partial u}}  \right)  
\left( {\frac {\partial^2 f^x}{\partial {u}^
2}}  \right) \dot{u}^2
+2
 \left( {\frac {\partial f^x}{\partial u}}  \right)  
\left( {\frac {\partial^2 f^x}{\partial v \partial u}}  \right) 
\dot{u}\dot{v}
+ \left( {\frac {\partial f^x}{\partial u}} \right)  
\left( {\frac {\partial^2 f^x}{\partial {v}^
2}}  \right) \dot{v}^2
\\
&
+ \left( {\frac {\partial f^y}{\partial u}}  \right)  
\left( {\frac {\partial^2 f^y}{\partial u^2}}  \right) \dot{u}^{2}
+2 \left( {\frac {\partial f^y}{\partial u}}  \right)  
\left( {\frac {
\partial ^2 f^y}{\partial v\partial u}} \right) \dot{u}\dot{v}
+ \left( {\frac {\partial f^y}{\partial u}} \right)  
\left( {\frac {
\partial ^{2} f^y}{\partial v^{2}}}  \right) \dot{v}^2
\\&
+ \left( {\frac {\partial f^z}{\partial u}}  \right)  
\left( {\frac {\partial^{2} f^z}{\partial u^2}}  \right) \dot{u}^{2}
+2 \left( {\frac {\partial f^z}{\partial u}} \right)  
\left( {\frac {\partial^2 f^z}{\partial v\partial u}}
 \right) \dot{u}\dot{v}
+
 \left( {\frac {\partial f^z}{\partial u}}  \right)  
\left( {\frac {\partial^2 f^z}{\partial v^2}}  \right) \dot{v}^2
\\
&
+\left(\frac{\partial f^y}{\partial u}\frac{\partial f^x}{\partial v}
-\frac{\partial f^x}{\partial u}\frac{\partial f^y}{\partial v}
\right)\alpha\dot{v}
+
\left(\frac{\partial f^z}{\partial u}\frac{\partial f^x}{\partial v}
-\frac{\partial f^x}{\partial u}\frac{\partial f^z}{\partial v}
\right)\beta\dot{v}
\\
&
+
\left(\frac{\partial f^z}{\partial u}\frac{\partial f^y}{\partial v}
-\frac{\partial f^y}{\partial u}\frac{\partial f^z}{\partial v}
\right)\gamma\dot{v},
\end{aligned}
\end{equation*}
\begin{equation*}
\begin{aligned}
&P_2=
\left( {\frac {\partial f^x}{\partial v}}  \right)  
\left( {\frac {\partial^2 f^x}{\partial {u}^
2}}  \right) \dot{u}^2
+2
 \left( {\frac {\partial f^x}{\partial v}}  \right)  
\left( {\frac {\partial^2 f^x}{\partial v \partial u}}  \right) 
\dot{u}\dot{v}
+ \left( {\frac {\partial f^x}{\partial v}} \right)  
\left( {\frac {\partial^2 f^x}{\partial {v}^
2}}  \right) \dot{v}^2
\\
&
+ \left( {\frac {\partial f^y}{\partial v}}  \right)  
\left( {\frac {\partial^2 f^y}{\partial u^2}}  \right) \dot{u}^{2}
+
2 \left( {\frac {\partial f^y}{\partial v}}  \right)  
\left( {\frac {\partial ^2 f^y}{\partial v\partial u}} \right) \dot{u}\dot{v}
+ 
\left( {\frac {\partial f^y}{\partial v}} \right)  
\left( {\frac {
\partial ^{2} f^y}{\partial v^2}}  \right) \dot{v}^2
\\&
+ \left( {\frac {\partial f^z}{\partial v}}  \right)  
\left( {\frac {\partial^{2} f^z}{\partial u^2}}  \right) \dot{u}^{2}
+
2 \left( {\frac {\partial f^z}{\partial v}} \right)  
\left( {\frac {\partial^2 f^z}{\partial v\partial u}}
 \right) \dot{u}\dot{v}
+
 \left( {\frac {\partial f^z}{\partial v}}  \right)  
\left( {\frac {\partial^2 f^z}{\partial v^2}}  \right) \dot{v}^2
\\
&
+\left(\frac{\partial f^y}{\partial u}\frac{\partial f^x}{\partial v}
-\frac{\partial f^x}{\partial u}\frac{\partial f^y}{\partial v}
\right)\alpha\dot{v}
+
\left(\frac{\partial f^z}{\partial u}\frac{\partial f^x}{\partial v}
-\frac{\partial f^x}{\partial u}\frac{\partial f^z}{\partial v}
\right)\beta\dot{v}
\\
&
+
\left(\frac{\partial f^z}{\partial u}\frac{\partial f^y}{\partial v}
-\frac{\partial f^y}{\partial u}\frac{\partial f^z}{\partial v}
\right)\gamma\dot{v},
\end{aligned}
\end{equation*}
and
\begin{equation*}
\begin{aligned}
&Q_{11}=\left(\frac{\partial f^x}{\partial u}\right)^2
+\left(\frac{\partial f^y}{\partial u}\right)^2
+\left(\frac{\partial f^z}{\partial u}\right)^2,
\quad
Q_{22}=\left(\frac{\partial f^x}{\partial v}\right)^2
+\left(\frac{\partial f^y}{\partial v}\right)^2
+\left(\frac{\partial f^z}{\partial v}\right)^2,
\\
&
Q_{12}=
\frac{\partial f^x}{\partial u}\frac{\partial f^x}{\partial v}
+\frac{\partial f^y}{\partial u}\frac{\partial f^y}{\partial v}
+\frac{\partial f^z}{\partial u}\frac{\partial f^z}{\partial v}.
\end{aligned}
\end{equation*}
The source form $J^2\iota{}^*\varepsilon$ satisfies the Helmholtz conditions (Theorem \ref{thind}) for arbitrary functions $\alpha$, $\beta$, $\gamma$ depending on the positions $x,y,z$. Hence any two-dimensional submanifold of $\R^3$ is a variational submanifold for the gyroscopic type system (\ref{3gyr}).
\end{example}


\begin{thebibliography}{}
%
\bibitem{And}
I.M. Anderson and T. Duchamp, On the existence of global variational principles, Am. J. Math. 102 (1980), 781--867.
\bibitem{Brajer}
J. Brajer\v{c}\'ik and D. Krupka, Variational principles for locally variational forms, J. Math. Phys. 46 (2005), 052903. 
\bibitem{Dreizler}
R.M. Dreizler and C.S. L\"{u}dde, \textit{Theoretical Mechanics}, Theoretical Physics Vol. 1, Graduate Texts in Physics, Springer-Verlag, Berlin, 2011.
\bibitem{Praha}
D. Krupka, On the local structure of the Euler--Lagrange mapping of the calculus of variations, in: Proc. Conf. Diff. Geom. Appl., Charles University, Prague, pp. 181--188, 1981; arXiv:math-ph/0203034.
\bibitem{Seq}
D. Krupka, Variational sequences on finite order jet spaces, in: Diff. Geom. Appl., Prof. Conf., 27 August - 2 September, 1989, Czechoslovakia, World Scientific, Singapore, pp. 236--254, 1990.
\bibitem{SeqMech}
D. Krupka, Variational sequences in mechanics, Calc. Var. 5 (1997), 557--583.
\bibitem{Book}
D. Krupka, \textit{Introduction to Global Variational Geometry}, Atlantis Press, Amsterdam, 2015.
\bibitem{Handbook}
D. Krupka and D. Saunders (Eds.), \textit{Handbook of Global Analysis}, Elsevier, Amsterdam, 2008.
\bibitem{Landau}
L.D. Landau and E.M. Lifshitz, \textit{Mechanics}, 2nd Edition, Course of Theoretical Physics, Vol. 1, Pergamon Press, Oxford, 1969.
\bibitem{Sarlet}
W. Sarlet, The Helmholtz conditions revisited. A new approach to the inverse problem of Lagrangian dynamics, J. Phys. A: Math. Gen. 15 (1982), 1503--1517.
\bibitem{Takens}
F. Takens, A global version of the inverse problem of the calculus of variations, J. Diff. Geom. 14 (1979), 543--562.
\bibitem{Acta}
Z. Urban and D. Krupka, Variational sequences in mechanics on Grassmann fibrations, Acta Appl. Math. 112, No. 2 (2010), 225--249.
\bibitem{Urban}
Z. Urban, Variational Principles for Immersed Submanifolds, in: D. Zenkov (Ed.), \textit{The Inverse Problem of the Calculus of Variations}, Atlantis Press, Amsterdam, pp. 103--170, 2015.
\bibitem{Voicu}
N. Voicu and D. Krupka, Canonical variational completion of differential equations, J. Math. Phys. 56 (2015), 043507.
\bibitem{VU}
J. Voln\'a and Z. Urban, The interior Euler--Lagrange operator in field theory, Math. Slovaca 65,  No. 6 (2015), 1427--1444.
\bibitem{Warner} 
F.W. Warner, \textit{Foundations of Differentiable Manifolds and Lie Groups}, 2nd Ed., Springer, New York, 1983.
\end{thebibliography}
\end{document}